\newcommand{\newassumption}[2]{
  \theoremstyle{plain}
  \theoremheaderfont{\normalfont\sc}
  \theorembodyfont{\normalfont\itshape}
  \theoremseparator{.}
  \theoremsymbol{}
  \newtheorem{#1}{#2}
}
\crefname{hypothesis}{Hypothesis}{Hypotheses}
\newcommand{\aff}{\mathrm{aff}\,}
\def\argmin{ \mathop{{\rm argmin}}}
\newcommand{\dom}{\mathrm{dom}\,}
\newcommand{\epi}{\mathrm{epi}\,}
\newcommand{\gph}{\mathrm{gph}\,}
\DeclareMathOperator{\inter}{int}
\newcommand{\ri}{\mathrm{ri}\,}
\newcommand{\para}{\mathrm{par}\,}
\newcommand{\rge}{\mathrm{rge}\,}
\newcommand{\sgn}{\mathrm{sgn}}
\newcommand{\lin}{\mathrm{span}\,}
\def\Limsup{\mathop{{\rm Lim}\,{\rm sup}}}
\newcommand{\p}{\partial}
\newcommand{\R}{\mathbb{R}}
\newcommand{\supp}{\mathrm{supp}}
\newcommand{\rank}{\mathrm{rank}\,}
\newcommand{\rbar}{\overline{\mathbb R}}
\newcommand{\rp}{\mathbb R\cup\{+\infty\}}
\newcommand{\IFF}{\quad\Longleftrightarrow\quad}
\newcommand{\bB}{\mathbb{B}}
\newcommand{\bN}{\mathbb{N}}
\newcommand{\bE}{\mathbb{E }}
\newcommand{\ip}[2]{\left\langle #1,\, #2\right\rangle}
\newcommand{\set}[2]{\left\{#1\,\left\vert\; #2\right.\right\}}
\newcommand{\sig}{\sigma}
\newcommand{\lam}{\lambda}
\newcommand{\cS}{\mathcal{S}}
\newcommand{\cT}{\mathcal{T}}
\newcommand{\cV}{\mathcal{V}}
\newcommand{\AND}{\ \mbox{ and }\ }
\newcommand{\st}{\ \mbox{s.t.}\ }
\newcommand{\reals}{\ensuremath{\mathbb{R}}}
\newcommand{\nats}{\ensuremath{\mathbb{N}}}
\DeclareMathOperator{\iid}{\overset{\text{iid}}{\sim}}
\DeclareMathOperator{\proj}{P}
\DeclareMathOperator{\identity}{\mathbb{I}}
\DeclareMathOperator{\rmd}{d}
\definecolor{codegray}{gray}{0.95}
\newcommand{\code}[1]{\fcolorbox{codegray}{codegray}{\small\texttt{#1}}}
\newcommand{\cvxpy}{\code{CVXPY}}
\newcommand{\mosek}{\code{MOSEK}}
\newcommand{\eg}{\emph{e.g.,}}
\newcommand{\etal}{\emph{et al.}}
\newcommand{\ie}{\emph{i.e.,}}
\newcommand{\vs}{\emph{vs.\@}}
\newcommand{\lasso}{LASSO}
\newcommand{\srlasso}{SR-LASSO}
\title{Square Root {LASSO}: well-posedness,
  Lipschitz stability and the tuning trade off\thanks{\today\funding{The first author was partially supported by a postdoc stipend from the Centre de Recherche Mathématiques (CRM) as well as the Institut de valorisation des données (IVADO) and NSERC. The second author acknowledges the support of NSERC through grant RGPIN-2020-06766, the Fonds de Recherche du Québec Nature et Technologies
(FRQNT) through grant 313276, the Faculty of Arts and Science of Concordia University and the CRM. The third author was partially supported by the NSERC discovery grant RGPIN-2017-04035.}}}
\author{Aaron Berk\thanks{Department of Mathematics and Statistics, McGill University\newline\indent{}\indent{}(\email{aaron.berk@mcgill.ca}, \email{tim.hoheisel@mcgill.ca}).}
\and Simone Brugiapaglia\thanks{Department of Mathematics and Statistics, Concordia University\newline\indent{}\indent{}(\email{simone.brugiapaglia@concordia.ca}).}
\and Tim Hoheisel\footnotemark[2]}
\bfseries\color[rgb]{.2,0.29,.7},
\crefname{lstalgorithm}{Algorithm}{Algorithms}
\newcommand{\crefnames}[3]{%
  \@for\next:=#1\do{%
    \expandafter\crefname\expandafter{\next}{#2}{#3}%
  }%
}
\begin{document}

\maketitle

% REQUIRED
\begin{abstract} This paper studies well-posedness and parameter sensitivity of
  the \emph{Square Root LASSO (\srlasso{})}, an optimization model for
  recovering sparse solutions to linear inverse problems in finite dimension. An
  advantage of the \srlasso{} (\eg{} over the standard LASSO) is that the
  optimal tuning of the regularization parameter is robust with respect to
  measurement noise. This paper provides three point-based regularity conditions
  at a solution of the \srlasso{}: the \emph{weak}, \emph{intermediate}, and
  \emph{strong} assumptions. It is shown that the weak assumption implies
  uniqueness of the solution in question. The intermediate assumption yields a
  directionally differentiable and locally Lipschitz solution map (with explicit
  Lipschitz bounds), whereas the strong assumption gives continuous
  differentiability of said map around the point in question. Our analysis leads
  to new theoretical insights on the comparison between \srlasso{} and LASSO
  from the viewpoint of tuning parameter sensitivity: noise-robust optimal
  parameter choice for \srlasso{} comes at the ``price'' of elevated tuning
  parameter sensitivity. Numerical results support and showcase the theoretical
  findings.
\end{abstract}

% REQUIRED
\begin{keywords}
  Square Root LASSO, sparse recovery, variational analysis, convex analysis, sensitivity analysis, implicit function theorem, Lipschitz stability
\end{keywords}

% REQUIRED
\begin{MSCcodes}
  49J53, % Set-valued and variational analysis
  62J07, % Ridge regression; shrinkage estimators (LASSO)
  90C25, % Convex programming
  94A12, % Signal theory (characterizatio@n, reconstruction, filtering, etc.)
  94A20 % Sampling theory in information and communication theory
\end{MSCcodes}

\section{Introduction}
\label{sec:introduction}

In this paper we study the \emph{Square Root \lasso{}} (\emph{\srlasso{}})
\begin{equation}\label{eq:SR-LASSO} 
\min_{x\in \R^n}\|Ax-b\| +\lambda \|x\|_1,
\end{equation}
which was introduced in~\cite{belloni2011square} as an optimization model for
computing sparse solutions to the linear inverse problem $Ax\approx b$.  Here,
$A \in \mathbb{R}^{m \times n}$ is a \emph{design} or \emph{sensing matrix},
$b \in \mathbb{R}^m$ is a vector of \emph{observations} or \emph{measurements},
$\lambda >0$ is a \emph{tuning parameter}, and $\|\cdot\|$ and $\|\cdot\|_1$
denote the Euclidean and the $\ell_1$-norm, respectively. We refer to $\|Ax-b\|$
and $\lambda \|x\|_1$ as the data fidelity and the regularization term,
respectively. Seeking an optimal balance between data fidelity and
regularization, the \srlasso{} is a powerful sparse regularization technique,
widely adopted in statistics and increasingly popular in scientific computing
and machine learning (see~\cref{sec:motivation}).  The \srlasso{} is a close
relative of the well-known \emph{\lasso{}} (\emph{Least Absolute Shrinkage and
  Selection Operator})~\cite{tibshirani1996regression}, whose unconstrained
formulation is obtained from~\cref{eq:SR-LASSO} by squaring (and, optionally,
rescaling) the data fidelity term, thus also explaining the terminology.

This seemingly minor algebraic transformation corresponds to a major benefit for
the \srlasso{}: optimal tuning strategies for the parameter $\lambda$ are robust
to unknown errors (\ie{} noise) corrupting the observations
(see~\cite{adcock2019correcting, belloni2011square, van2016estimation}
and~\cref{fig:motivation-known}). This is a key practical advantage
of~\cref{eq:SR-LASSO}. For example, in the context of sparse recovery, when $A$
and $b$ arise from noisy linear measurements of a sparse or compressible vector
$x^\sharp$, \ie{} $b = Ax^\sharp + e$ ($e \in \mathbb{R}^m$), $x^\sharp$ can be
successfully recovered via the \srlasso{} for values of $\lambda$ independent of
the noise $e$ under mild conditions on $A$ (such as the robust null space
property; see \eg{}~\cite[Theorem 6.29]{adcock2022sparse} for more details). In
contrast, an order-optimal choice of tuning parameter for \lasso{} is sensitive
to the noise scale~\cite{bickel2009simultaneous,shen2015stable}. This
attractive property has made the \srlasso{} increasingly popular over the last
decade in a variety of contexts beyond statistics, such as compressed sensing,
high-dimensional function approximation, and deep learning
(see~\cref{sec:motivation}).

In this paper, building upon a line of work initiated by the
authors~\cite{berk2023lasso}, we inspect the \srlasso{} through the lens of
\emph{variational analysis}~\cite{dontchev2014implicit, Mor06, Mor18,
  rockafellar1998variational}, which leads to a full picture of
well-posedness and stability of the solution mapping of~\cref{eq:SR-LASSO}.

\subsection{Motivation}
\label{sec:motivation}

The \srlasso{} was initially proposed by Belloni
\etal{}~\cite{belloni2011square} as a sparse high-dimensional linear regression
technique. Since then, it has had a significant impact in the statistical
community, \eg{} see~\cite{belloni2014pivotal, bunea2013group, pham2015robust,
  stucky2017sharp, tian2018selective} and the book~\cite{van2016estimation}. The
\srlasso{} is also closely related to other statistical estimation techniques,
such as the \emph{scaled \lasso{}}~\cite{sun2012scaled}\cite[Chapter
3]{van2016estimation} and \emph{SPICE} (\emph{SParse Iterative Covariance-based
  Estimation})~\cite{babu2012spectral,babu2014connection,stoica2010new}.

On top of its impact in statistics, the \srlasso{} (and its \emph{weighted}
formulation, where the $\ell_1$-norm in the regularization term
of~\cref{eq:SR-LASSO} is replaced with a weighted $\ell_1$-norm) has been
gaining increasing popularity in other fields such as compressed
sensing~\cite{donoho2006compressed}, high-dimensional function approximation,
and deep learning. The (weighted) \srlasso{} was applied and studied in the
compressed sensing context by Adcock \etal{}~\cite{adcock2019correcting},
motivated by applications to high-dimensional function approximation and
parametric differential equations~\cite{adcock2022sparse}. Further studies and
applications of the \srlasso{} in compressed sensing
include~\cite{adcock2022log, adcock2021compressive, foucart2023sparsity,
  mohammad2023greedy, petersen2022robust}.  In addition, the \srlasso{} was
recently employed to analyze and develop deep learning techniques. Training
strategies based on the \srlasso{} were used to prove so-called practical
existence theorems for deep neural networks~\cite{adcock2020deep, adcock2021gap}
and to develop stable and accurate neural networks for image
reconstruction~\cite{colbrook2022difficulty}. A thorough variational analysis
has, to the best of our knowledge, been lacking thus far.

\subsection{Main contributions}
\label{sec:contributions}

Our first contribution concerns well-posedness of the \srlasso{}. Concretely,
given a solution $\bar{x}$ of~\cref{eq:SR-LASSO} with data $(A, b,\lambda)$, we
establish (in \Cref{th:sufficient-condition-for-uniqueness}) that $\bar x$ (with
support $I$) is the unique minimizer if the following holds:
\begin{assumption}[Weak]
  \label{ass:weak-1}
  We have:
  \begin{itemize}
  \item[(i)] $\ker A_I=\{0\}$ and $ b\notin \rge A_I$; 
  \item[(ii)]
    $\exists z\in \ker A_I^T\cap\left\{\frac{ b-A\bar x}{\|A\bar x- b\|}\right\}^\perp: \left\|A_{I^C}^{\top}\left(\frac{ b-A\bar x}{\|A\bar x-b\|}+z\right)\right\|_\infty<\lambda$.
  \end{itemize}
\end{assumption}
We then introduce two stronger regularity conditions for the \srlasso{}: the
first, which we call the \emph{intermediate} condition, reads as follows for
some solution $\bar x$ and
$J := \set{i \in \{1,\dots,n\}}{\left| A_i^T\frac{ b-A\bar x}{\|A\bar x- b\|}
  \right| = \lambda}$.
\begin{assumption}[Intermediate]
  \label{ass:intermediate-1}
  We have $\ker A_J=\{0\}$, $A\bar x\neq b$, and $ b\notin \rge A_J$.
\end{assumption}
We show in~\cref{prop:intermed-implies-weak} that this condition implies
\cref{ass:weak-1}. On the other hand, we find in~\cref{prop:StrongInter} that it
is implied by the \emph{strong} condition at $\bar x$.
\begin{assumption}[Strong]
  \label{ass:Strong-1}
  We have:
  \begin{itemize}
  \item[(i)] $\ker A_{I} = \{ 0\}$ and $b\notin\rge A_I$; 
  \item[(ii)] $\|A_{I^C}^{\top}(b-A\bar x)\|_\infty<\lambda\|A\bar x-b\|$.
\end{itemize}
\end{assumption}
This analysis on uniqueness and the study of the relationships between the different regularity conditions relies heavily on classical convex analysis~\cite{rockafellar1970convex}.

Our second main contribution concerns sensitivity of solutions
of~\cref{eq:SR-LASSO} to the data, \ie{} we investigate the (solution) mapping
$S : \reals^{m} \times \reals_{++} \rightrightarrows \reals^{n},$
\begin{align}
  \label{eq:solution-mapping-b-lambda}
  S : (b, \lambda) \mapsto \argmin_{x \in \reals^{n}} \left\{ \| Ax - b \| + \lambda \| x \|_{1} \right\}.
\end{align}
To this end, we bring to bear the powerful machinery of variational analysis and
the set-valued implicit function theorems built around graphical differentiation
\`a la Rockafellar and Wets~\cite{rockafellar1998variational},
Mordukhovich~\cite{Mor06, Mor18}, and Dontchev and
Rockafellar~\cite{dontchev2014implicit}.

We show in \Cref{thm:directional-differentiability} that $S$ is locally
Lipschitz (hence single-valued) and directionally differentiable at
$(\bar b,\bar \lambda)$ if \cref{ass:intermediate-1} holds at
$\bar x:=S(\bar b,\bar \lambda)$. Complementing this, in \cref{prop:Explicit} we
furnish an analytic expression for the (unique) solution under said intermediate
condition. Moreover, in \cref{thm:main-theorem}, we show that $S$ is
continuously differentiable at $(\bar b,\bar \lambda)$ if \cref{ass:Strong-1}
holds at $\bar x$. These theoretical findings are summarized in
\cref{fig:Summary-new}.

\vspace{-5mm}
\tikzcdset{every label/.append style = {font=\small,yshift=1pt}, every arrow/.append style = {shorten=2em}}
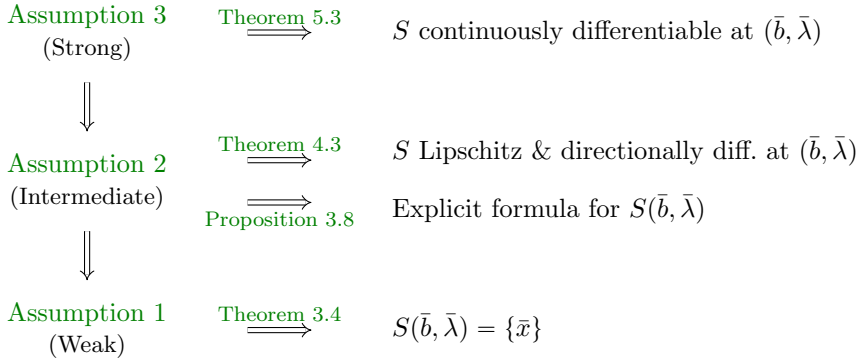
\begin{figure}[h!]
\begin{equation*}
\begin{tikzcd}[row sep=2em, column sep=15ex, /tikz/column 2/.append style={anchor=base west}] 
  \begin{tabular}[c]{c}
    \cref{ass:Strong-1} \\ {\small(Strong)}
  \end{tabular}
  \arrow[d,Rightarrow,shorten=3pt]
  \arrow[r,Rightarrow,"\text{\footnotesize\cref{thm:main-theorem}}"] &
  \begin{tabular}[c]{c}
    $S$ continuously differentiable  at $(\bar b,\bar \lambda)$
  \end{tabular}
  \\
  \begin{tabular}[c]{c}
    \cref{ass:intermediate-1}\\ {\small (Intermediate)}
  \end{tabular}
  \arrow[r, Rightarrow,shift left=.8em,"{\footnotesize\text{\cref{thm:directional-differentiability}}}"]
  \arrow[r, Rightarrow, swap, shift right=.8em, "\text{\footnotesize\cref{prop:Explicit}}"{yshift=-2pt}]
  \arrow[d, Rightarrow,shorten=3pt]
  &
  \begin{tabular}[c]{l}
    $S$ Lipschitz \& directionally
    diff.\ at $(\bar b,\bar \lambda)$\\[1em]
    Explicit 
    formula for $S(\bar{b}, \bar{\lambda})$
  \end{tabular}
  \\
  \begin{tabular}[c]{c}
    \cref{ass:weak-1}\\ {\small (Weak)}
  \end{tabular}
  \arrow[r, Rightarrow, "\text{\footnotesize\cref{th:sufficient-condition-for-uniqueness}}"] &
  \begin{tabular}[c]{c}
    $S(\bar{b},\bar{\lambda}) = \{\bar{x}\}$
  \end{tabular}
\end{tikzcd}
\end{equation*}
\caption{Ordering of regularity assumptions and their implications}
\label{fig:Summary-new}
\end{figure}

\vspace{-5mm}
\noindent
Our third main contribution is a comparison of \srlasso{} and (unconstrained)
\lasso{} (\emph{cf.}~\cref{eq:unconstrained-LASSO}). It is well known that
\srlasso{} optimal parameter choice is noise scale robust, but not so for
\lasso{} (\emph{cf.}~\cref{fig:motivation-known}). However, we elaborate
in~\cref{sec:sr-vs-uc} on the differences in sensitivity between the two
programs and suggest the ``price'' for robustness is increased parameter
sensitivity. For instance, \cref{fig:motivation-new} portrays the (local) Lipschitz
behavior of both programs, displaying elevated parameter sensitivity for
\srlasso{}. A theoretical argument supporting this behavior is given
in~\cref{sec:theoretical-comparison}
(see~\cref{eq:Lipschitz_lambda_SR-LASSO,eq:Lipschitz_lambda_LASSO}). Our insights on this robustness-sensitivity trade off for SR-LASSO's parameter tuning strategies are, to the best of our knowledge, a novel contribution.
\begin{figure}[t]
  \centering
  \begin{subfigure}[t]{.49\linewidth}
    \includegraphics[width=\linewidth]{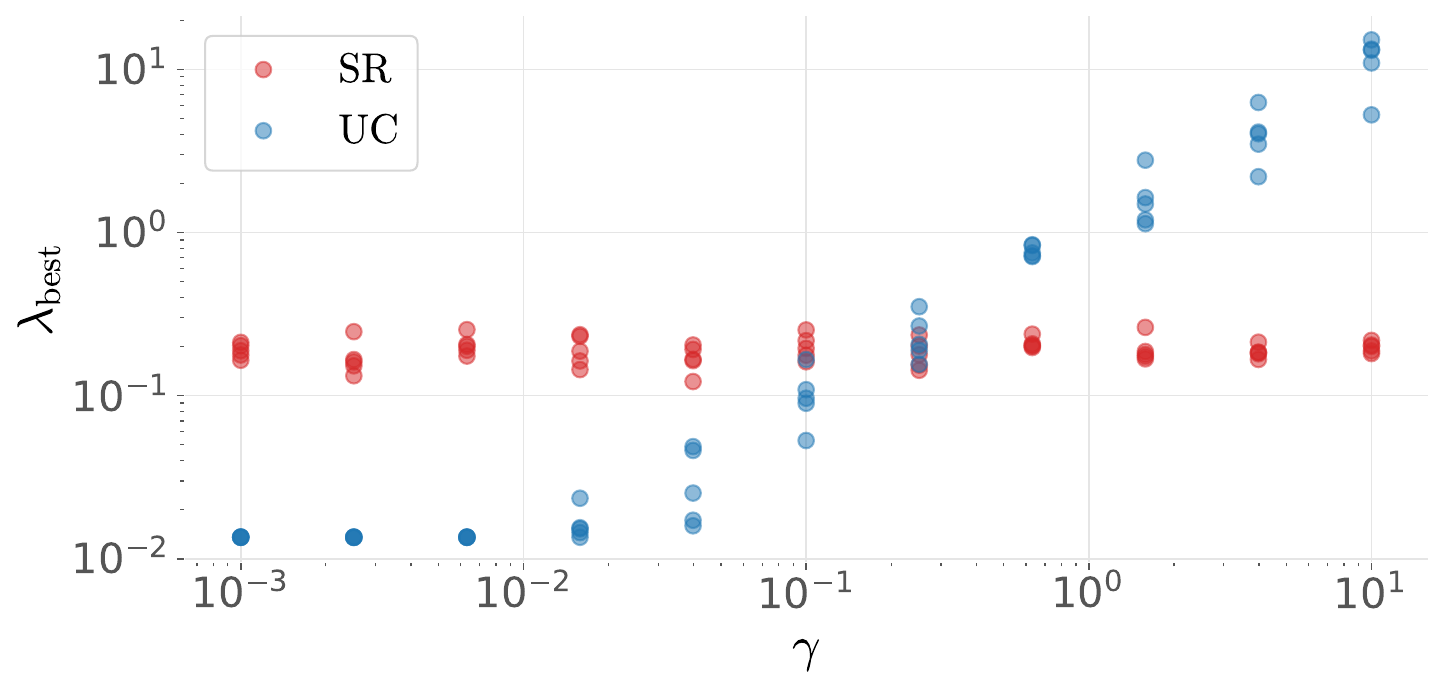}
    \caption{$\lambda_{\text{best}}^{\text{SR}}$ and
      $\lambda_{\text{best}}^{\text{UC}}$ \vs{} noise scale  $\gamma$, $5$ realizations each;
      $(m, n, s) = (50, 100, 5)$.\label{fig:motivation-known}}
  \end{subfigure}
  \hfill
  \begin{subfigure}[t]{.49\linewidth}
    \includegraphics[width=\linewidth]{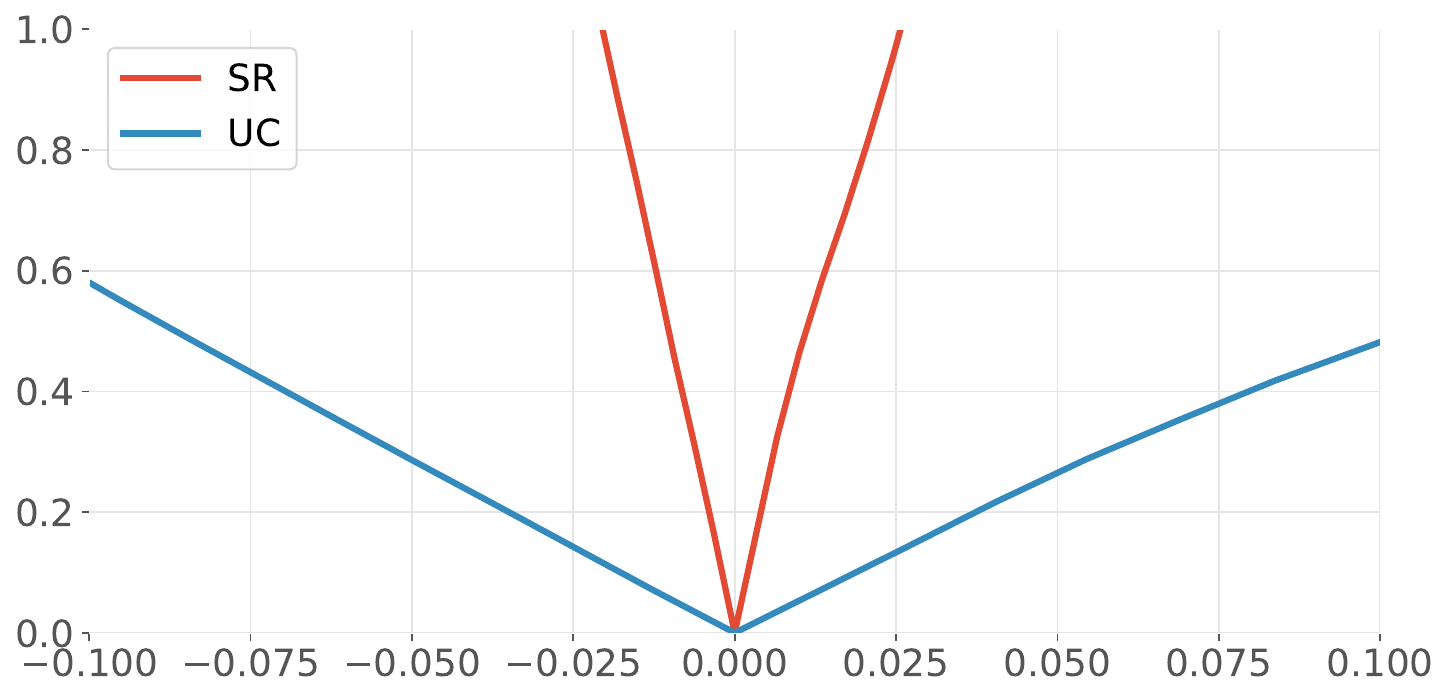}
    \caption{(Local) Lipschitz behavior for each program: $|\lambda - \bar{\lambda}_{\text{best}}|$ vs.\ $\|\bar{x}(\lambda) - \bar{x}(\bar{\lambda}_{\text{best}})\|$.\label{fig:motivation-new}}
  \end{subfigure}
  \caption{Comparison between \srlasso{} (SR) and (unconstrained) \lasso{} (UC):
    recovery of an unknown \emph{sparse} signal $x^{\sharp} \in \reals^n$ from
    noisy underdetermined linear measurements (\emph{cf.}~\cref{eq:CS_model}).
    For $\lambda > 0$, denote by $\bar{x}_{\text{SR}}(\lambda)$,
    $\bar{x}_{\text{UC}}(\lambda) \in \reals^{n}$ respective solutions to
    \srlasso{} and \lasso{}. The optimal parameter values
    $\lambda_{\text{best}}^{\text{SR}}, \lambda_{\text{best}}^{\text{UC}} > 0$
    for each program minimize the $\ell_2$ approximation error between the
    solution and $x^\sharp$. \cref{fig:motivation-new} corresponds with $(\gamma, m) = (0.5, 100)$ in \cref{fig:empirical-lipschitz-bound-mg}. See~\cref{sec:sr-vs-uc} and \cite{adcock2019correcting, belloni2011square, van2016estimation} for further details and
    discussion; 
    \cref{sec:implementation-details} for numerical implementation details.}
  \label{fig:motivation-sr-uc-comparison}
\end{figure}

Our final contribution, in~\cref{sec:numerics}, is a numerical exploration of
\srlasso{} solution uniqueness and sensitivity, as well as an empirical
verification of the tightness of our Lipschitz bounds in
\cref{sec:continuous-differentiability-solution-function} using synthetic
experiments. In particular,
\cref{fig:uniqueness-sufficiency-simple,fig:uniqueness-sufficiency-heatmap}
demonstrate a wide neighborhood in which the sufficient condition for
uniqueness is empirically satisfied. Moreover,
\cref{fig:empirical-lipschitz-bound} supports the notion that our theoretical bounds on the Lipschitz constant for \srlasso{} are relatively tight (at least in the regime considered in the experiment).

\subsection{Related work}

The well-posedness study presented in this paper is inspired by the one for the
LASSO problem in Zhang \etal{}~\cite{zhang2015necessary},
Gilbert~\cite{gilbert2017solution} and Tibshirani~\cite{tibshirani2013lasso}, as
well as the one for nuclear norm minimization by Hoheisel and
Paquette~\cite{HoP23}. The stability analysis executed here is similar to a study on the LASSO problem
carried out by the authors of this paper~\cite{berk2023lasso}. Stability
analysis for linear least-squares problems (\ie{} quadratic fidelity term) with
general, partially smooth regularizers can be found in the body of work by
Vaiter \etal{}, \eg{}~\cite{VDF15, VDF17}. This embeds in the more
general and more recent studies (which do \emph{not} cover the \srlasso{}) by
Bolte \etal{}~\cite{bolte2024differentiating, BPS21}. A sensitivity analysis of the proximal
operator using tools similar to our study can be found in Friedlander \etal{}~\cite{friedlander2022perspective}. Tuning parameter sensitivity has
previously been examined for other LASSO formulations~\cite{berk2021best} and
for proximal denoising~\cite{berk2021sensitivity}.

Further studies that tackle the \srlasso{} explicitly, albeit not from a
variational-analytic perspective, were discussed in \cref{sec:motivation}. They
include contributions in the fields of statistics~\cite{belloni2011square,
  belloni2014pivotal, bunea2013group, pham2015robust, stucky2017sharp,
  tian2018selective, van2016estimation}, sparse recovery, compressed
sensing~\cite{adcock2019correcting, adcock2022sparse, adcock2022log,
  foucart2023sparsity, mohammad2023greedy, petersen2022robust}, and deep
learning~\cite{adcock2020deep, adcock2021gap, colbrook2022difficulty}.

\subsection{Notation}
\label{sec:notation}

In what follows, the Euclidean norm is denoted by $\|\cdot\|$, the
$\ell_{1}$-norm (on $\R^n$) is given by $\|x\|_1 := \sum_{i=1}^n|x_i|$, while
the $\ell_\infty$-norm (or \emph{maximum norm}) is given by
$\|x\|_\infty:=\max_{i=1,\dots,n}|x_i|$. The corresponding unit balls have the
respective subscripts, \ie{} $\bB$, $\bB_1$ and $\bB_\infty$.  The support of a
vector $x\in\R^n$ is given by $\supp(x):=\set{i\in \{1,\dots,n\}}{x_i\neq
  0}$. The first $n$ positive integers are denoted $[n] := \{1, 2, \ldots,
n\}$. Define the projection operator onto a closed, convex set
$C \subseteq \reals^{n}$ by
$\proj_{C}(x) := \argmin_{z \in C} \| z - x \|$. Write the orthogonal
complement of a subspace $V \subseteq \reals^{m}$ as $V^{\perp}$. The identity
matrix is denoted by $\identity$. The spectral norm of a matrix
$X\in \R^{m\times n}$ is denoted by $\|X\|$.  For a set $K \subseteq [n]$,
$X_M\in \R^{m\times |K|}$ is the matrix whose columns are the columns $X_i$ of
$X$ for $i\in K$. For $x \in \reals^{n}$, we let $L_{K}(x) \in \reals^{n}$ be
the vector whose elements are $x_{i}$ if $i \in K$ and $0$ otherwise.

\section{Preliminaries}
\label{sec:preliminaries}

We start with some basic results from matrix analysis. Denote the (Moore-Penrose) pseudoinverse of $X$ by $X^{\dagger}$. For a symmetric matrix
$S\in \R^{d\times d}$, let $\lambda_{\max}(S)$ be its maximum eigenvalue. For a
matrix $X\in \R^{m\times s}$, let $\sigma_{\min}(X)$ be its smallest nonzero
singular value, and let $\sig_{\max}(X)$ be its maximum singular value. Recall
that
\[
\sigma_{\max}(X)=\|X\|=\sqrt{\lambda_{\max}(X^{T}X)}=\max_{v\in \bB}v^{T}Xv.
\]

We commence with a result that is ubiquitous in our study and is, in essence, the famous Sherman-Morrison-Woodbury formula~\cite[(0.7.4.1)]{horn2012matrix}.
\begin{lemma}[Sherman-Morrison-Woodbury]
  \label{lem:SM}
  Let $M\in \R^{m\times s}$ such that $\rank M=s$, and let $v\in \R^m$ with
  $\|v\|=1$. For the matrix $W:=M^{\top}(\identity-vv^{\top})M$ the following
  hold:
  \begin{itemize}
  \item[(a)] $W$ is invertible (in fact, symmetric positive definite) if (and
    only if) $v\notin \rge M$ with
    \[
      W^{-1}=(M^{\top}M)^{-1}+\frac{M^\dagger v(M^\dagger v)^{\top}}{1-v^{\top}MM^\dagger v}.
    \]
  \item[(b)] In the invertible case, we have
    \[
      \lambda_{\max}(W^{-1})%
      \leq \frac{1}{\sigma_{\min}(M)^2}+ \frac{\|M^\dagger v\|^2}{1-v^{\top}MM^\dagger v} %
      < \frac{1}{\sigma_{\min}(M)^2}+ \frac{1}{1-v^{\top}MM^\dagger v}.
    \]
  \end{itemize}
\end{lemma}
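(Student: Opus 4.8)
The plan is to read everything off the identity $W = M^\top M-(M^\top v)(M^\top v)^\top$, which exhibits $W$ as a symmetric rank-one downdate of the Gram matrix $M^\top M$; the latter is invertible because $\rank M=s$, and in this full-column-rank case $M^\dagger=(M^\top M)^{-1}M^\top$. The single computation driving the whole argument is the quadratic form $x^\top Wx=\norm{Mx}^2-\ip{v}{Mx}^2$ for $x\in\R^s$.

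For (a) I would first settle definiteness. Since $\norm v=1$, Cauchy--Schwarz gives $\ip{v}{Mx}^2\le\norm{Mx}^2$, so $W$ is positive semidefinite, and for $x\neq0$ equality holds exactly when $Mx$ is a (necessarily nonzero, as $\rank M=s$) scalar multiple of $v$. Thus $W$ has a nontrivial kernel iff $v\in\rge M$, that is, $W$ is positive definite (equivalently, invertible) iff $v\notin\rge M$. This matches the denominator condition: $MM^\dagger$ is the orthogonal projector onto $\rge M$, so $v^\top MM^\dagger v=\norm{MM^\dagger v}^2\le\norm v^2=1$ with equality iff $v\in\rge M$, whence $1-v^\top MM^\dagger v>0$ characterizes precisely the invertible case. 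The formula for $W^{-1}$ is then the Sherman--Morrison identity for $M^\top M-uu^\top$ with $u:=M^\top v$: substituting $(M^\top M)^{-1}u=M^\dagger v$ and $u^\top(M^\top M)^{-1}u=v^\top MM^\dagger v$ turns the generic expression into the stated one.

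For (b) I would bound the maximal eigenvalue of each of the two (symmetric, positive semidefinite) summands of $W^{-1}$ separately and add, using subadditivity of $\lambda_{\max}$ (Weyl). The first summand contributes $\lambda_{\max}((M^\top M)^{-1})=1/\sigma_{\min}(M)^2$; the second is a positive scalar times the rank-one matrix $M^\dagger v(M^\dagger v)^\top$, whose only nonzero eigenvalue is $\norm{M^\dagger v}^2$, which yields the first inequality at once. The main obstacle is the final strict inequality: it amounts to replacing $\norm{M^\dagger v}^2$ by $1$ in the numerator and so hinges entirely on controlling $\norm{M^\dagger v}$. Writing the thin SVD $M=U\Sigma V^\top$ gives $\norm{M^\dagger v}^2=\sum_i (U^\top v)_i^2/\sigma_i^2$ and $v^\top MM^\dagger v=\norm{U^\top v}^2$, hence $\norm{M^\dagger v}^2\le v^\top MM^\dagger v/\sigma_{\min}(M)^2<1/\sigma_{\min}(M)^2$. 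This is the step I would scrutinize most carefully, since passing to the clean bound $1/(1-v^\top MM^\dagger v)$ for the rank-one term really requires $\norm{M^\dagger v}\le1$, which is not automatic for arbitrary $M$ (it is guaranteed, for instance, once $\sigma_{\min}(M)\ge1$); I would therefore check precisely which normalization of $M$ the statement intends before committing to the strict form.
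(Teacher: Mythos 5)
Your handling of part (a) and of the first inequality in (b) is correct and essentially identical to the paper's: the paper likewise reduces to the Sherman--Morrison formula for the rank-one downdate $M^{\top}M-(M^{\top}v)(M^{\top}v)^{\top}$, identifies vanishing of the denominator $1-v^{\top}MM^{\dagger}v$ with $v\in\rge M$ by viewing $MM^{\dagger}$ as the orthogonal projector onto $\rge M$ and invoking Cauchy--Schwarz, and then obtains the first bound in (b) from Weyl's subadditivity of $\lambda_{\max}$ together with the fact that the rank-one summand has sole nonzero eigenvalue $\|M^{\dagger}v\|^{2}/(1-v^{\top}MM^{\dagger}v)$. Your direct quadratic-form argument $x^{\top}Wx=\|Mx\|^{2}-\ip{v}{Mx}^{2}$ for positive (semi)definiteness is a small addition that the paper leaves implicit.

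Your refusal to commit to the final strict inequality is vindicated, and this is the substantive point of comparison: the paper's proof of (b) consists \emph{only} of the Weyl step and never addresses the strict inequality, and indeed that inequality is false without a normalization hypothesis. As you observe, it is equivalent to $\|M^{\dagger}v\|^{2}<1$, which your SVD estimate guarantees only when $\sigma_{\min}(M)\geq 1$. Concretely, take $m=2$, $s=1$, $M=(1/2,\,0)^{\top}$ and $v=(1/\sqrt{2},\,1/\sqrt{2})^{\top}$, so that $v\notin\rge M$. Then $W=1/8$ and $\lambda_{\max}(W^{-1})=8$ (the first bound in (b) holds with equality here), whereas
\[
\frac{1}{\sigma_{\min}(M)^{2}}+\frac{1}{1-v^{\top}MM^{\dagger}v}=4+2=6<8,
\]
so the lemma's final inequality fails. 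Your own estimate $\|M^{\dagger}v\|^{2}\leq v^{\top}MM^{\dagger}v/\sigma_{\min}(M)^{2}$ does yield a correct replacement of product form,
\[
\lambda_{\max}(W^{-1})\;\leq\;\frac{1}{\sigma_{\min}(M)^{2}\,\bigl(1-v^{\top}MM^{\dagger}v\bigr)},
\]
which is presumably what the statement ought to assert; the distinction matters beyond this lemma, since the strict inequality is invoked later in the paper's Lipschitz bounds. In short: prove (a) and the first bound in (b) exactly as you did, and for the last inequality either impose $\|M^{\dagger}v\|<1$ (for instance via $\sigma_{\min}(M)\geq1$) or substitute the product bound above.
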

\begin{proof}
  (a) First, observe that with the invertible matrix $A := M^{\top}M$,
  $x:=-M^{\top}v$, and $y := -x$, we have
  \begin{eqnarray*}
    1+x^{\top}A^{-1}y=0 & \iff & 1= v^{\top}M(M^{\top}M)^{-1}M^{\top}v=v^{\top}MM^\dagger v=\|MM^\dagger v\|^2\\
                        & \iff & \|MM^\dagger v\|=\|v\|\\
                        & \iff & v\in \rge M.
  \end{eqnarray*}
  The last equivalence follows from the fact that $MM^\dagger$ is the projection
  onto $\rge M$ and the Cauchy-Schwarz inequality.  Hence, by the
  Sherman-Morrison-Woodbury formula~\cite[(0.7.4.1)]{horn2012matrix} with
  $A, x, y$ as above, and assuming that $v\notin \rge M$, we have
  \begin{eqnarray*}
    W^{-1}  %
    = (M^{\top}M)^{-1} + \frac{(M^{\top}M)^{-1}M^{\top}vv^{\top}M(M^{\top}M)^{-1}}{1-v^{\top}M(M^{\top}M)^{-1}M^{\top}v}
    =   (M^{\top}M)^{-1}+\frac{M^\dagger v(M^\dagger v)^{\top}}{1-v^{\top}MM^\dagger v}.
  \end{eqnarray*}
  \smallskip
  
  \noindent
  (b) By \emph{Weyl's theorem}~\cite[Theorem 4.3.1]{horn2012matrix}, it follows from (a) that
  \begin{eqnarray*}
    \lambda_{\max}(W^{-1}) %
    & \leq %
    &  \lambda_{\max}((M^{\top}M)^{-1})+\lambda_{\max}\left( \frac{M^\dagger v(M^\dagger v)^{\top}}{1-v^{\top}MM^\dagger v}\right)
    \\
    & = & \frac{1}{\sigma_{\min}(M)^2}+\frac{\|M^\dagger v\|^2}{1-\|MM^\dagger v\|^2}.
  \end{eqnarray*}
  \hphantom{a}
\end{proof}

\noindent
We record an immediate consequence. 

\begin{corollary}
  \label{cor:Equiv}
  Let $M\in \R^{m\times s}$ and let $v\in \R^s$ with $\|v\|=1$. Then the
  following are equivalent:
  \begin{itemize}
  \item[(i)] $\ker M=\{0\}$ and $v\notin \rge M$;
  \item[(ii)] $\ker M^{\top}(\identity - vv^{\top})M = \{0\}$;
  \item[(iii)] $\ker (I-vv^{\top})M=\{0\}$.
  \end{itemize}
\end{corollary}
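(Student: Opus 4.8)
The plan is to exploit that, since $\|v\|=1$, the matrix $P:=\identity-vv^{\top}$ is the orthogonal projector onto $v^{\perp}$; in particular $P=P^{\top}=P^{2}$ and $\ker P=\rge(vv^{\top})=\lin\{v\}$. (I read $v\in\Rm$ here, for dimensional consistency with \cref{lem:SM}, so that $P\in\R^{m\times m}$ and $PM$ is well defined.) I would first establish the easy equivalence (ii)$\,\Leftrightarrow\,$(iii) and then close the loop with (i)$\,\Leftrightarrow\,$(iii).

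For (ii)$\,\Leftrightarrow\,$(iii), I use idempotence and symmetry of $P$ to factor $M^{\top}PM=M^{\top}P^{\top}PM=(PM)^{\top}(PM)$. Invoking the elementary identity $\ker N^{\top}N=\ker N$, valid for any real matrix $N$ (since $N^{\top}Nx=0$ forces $\|Nx\|^{2}=x^{\top}N^{\top}Nx=0$), with $N:=PM$ gives $\ker M^{\top}PM=\ker PM$, which is exactly the asserted equivalence of (ii) and (iii).

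For (i)$\,\Leftrightarrow\,$(iii) I would characterize $\ker PM$ directly. Because $\ker P=\lin\{v\}$, we have $PMx=0$ if and only if $Mx\in\lin\{v\}$, i.e.\ $Mx=tv$ for some $t\in\R$. For (i)$\,\Rightarrow\,$(iii): given such $x$, if $t\neq0$ then $v=t^{-1}Mx\in\rge M$, contradicting (i), so $Mx=0$ and hence $x=0$ by $\ker M=\{0\}$; thus $\ker PM=\{0\}$. For (iii)$\,\Rightarrow\,$(i): the inclusion $\ker M\subseteq\ker PM=\{0\}$ immediately forces $\ker M=\{0\}$, while if $v=Mx_{0}\in\rge M$ then $\|v\|=1$ guarantees $x_{0}\neq0$, yet $PMx_{0}=Pv=v-v\|v\|^{2}=0$, contradicting (iii); hence $v\notin\rge M$.

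I expect no serious obstacle: this is a genuine corollary of the projection structure already exploited in \cref{lem:SM}. The only point needing a little care is the reverse implication (iii)$\,\Rightarrow\,$(i), where both conclusions must be extracted separately — $\ker M=\{0\}$ comes for free from $\ker M\subseteq\ker PM$, whereas ruling out $v\in\rge M$ relies crucially on $\|v\|=1$ (so that any preimage of $v$ is nonzero) together with $Pv=0$.
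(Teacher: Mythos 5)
Your proof is correct, and it takes a genuinely different route from the paper's. The paper closes the cycle (i)$\Rightarrow$(ii)$\Rightarrow$(iii)$\Rightarrow$(i): the step (i)$\Rightarrow$(ii) is delegated to \cref{lem:SM}(a), i.e.\ to the Sherman--Morrison--Woodbury machinery (which shows $M^{\top}(\identity-vv^{\top})M$ is in fact symmetric positive definite, hence invertible), the step (ii)$\Rightarrow$(iii) is noted as obvious, and (iii)$\Rightarrow$(i) is argued by contraposition in essentially the same way you argue it (a nonzero preimage of $v$ lies in $\ker(\identity-vv^{\top})M$). You instead prove (ii)$\Leftrightarrow$(iii) in one stroke via the Gram-matrix factorization $M^{\top}PM=(PM)^{\top}(PM)$ together with the identity $\ker N^{\top}N=\ker N$, and you prove (i)$\Leftrightarrow$(iii) directly from the characterization $\ker P=\lin\{v\}$, so that $PMx=0$ iff $Mx\in\lin\{v\}$. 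This makes your argument fully self-contained and elementary: it never invokes the explicit SMW inverse formula, only the projector identities $P=P^{\top}=P^{2}$. What the paper's route buys in exchange is economy (Lemma \ref{lem:SM} is already proved and needed elsewhere) and the extra information that the matrix in (ii) is positive definite, a fact the paper reuses later (e.g.\ in the strong metric regularity argument); your factorization actually recovers positive semidefiniteness too, so nothing essential is lost. Finally, your reading of the statement with $v\in\R^{m}$ rather than $v\in\R^{s}$ is the right correction of what is evidently a typo, since otherwise $(\identity-vv^{\top})M$ is not dimensionally meaningful, and it matches the setting of \cref{lem:SM}.
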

\begin{proof}
  \vphantom{a}

  \noindent{}`$(i)\Rightarrow (ii)$': This follows immediately from \Cref{lem:SM}(a).

  \noindent{}`$(ii) \Rightarrow (iii)$': This is obvious.

  \noindent{}`$(iii) \Rightarrow (i)$': Assume (i) were false. Then $\ker M$ is
    nontrivial, in which case so is $\ker (\identity - vv^{\top})M$, or
    $v\in \rge M$. The latter, however, implies that there exists
    $y\in \R^s\setminus\{0\}$ such that
    $ (\identity - vv^{\top})My = (\identity - vv^{\top})v = 0, $ hence, also in
    this case, $\ker (\identity-vv^{\top})M$ is nontrivial.
\end{proof}

\subsection{Tools from  variational analysis}
\label{sec:tools-variational-analysis}
We provide in this section the necessary tools from variational analysis, and we
follow here the notational conventions of Rockafellar and
Wets~\cite{rockafellar1998variational}, but the reader can find the objects
defined here also in the books by Mordukhovich~\cite{Mor06, Mor18} or
Dontchev and Rockafellar~\cite{dontchev2014implicit}.

Let $S:\bE_1\rightrightarrows\bE_2$ be a set-valued map. The domain and graph of $S$, respectively, are $\dom S:=\set{x \in \bE_1}{S(x)\neq \emptyset}$ and $\gph S:=\set{(x,y)\in \bE_1\times \bE_2}{y\in S(x)}$. The \emph{outer limit} of $S$ at $\bar x \in \bE_1$  is
\[
\Limsup_{x\to \bar x} S(x):=\set{y\in\bE_2}{\exists \{x_k\}\to \bar x,  \{y_k\in S(x_k)\}\to y}.
\]
Now let $A\subseteq  \bE$.  The \emph{tangent  cone} of $A$ at $\bar x\in A$ is 
$
T_A(\bar x):=\Limsup_{t\downarrow 0} \frac{A-\bar x}{t}.
$
The \emph{regular normal cone} of $A$ at $\bar x\in A$ is the polar of the tangent cone, \ie{}
\[
\hat N_A(\bar x):=T_A(\bar x)^\circ=\set{v \in \bE}{\ip{v}{y}\leq 0\;\;\forall y\in T_{A}(\bar x)}.
\]
The \emph{limiting normal cone}   of $A$ at $\bar x\in A$ is 
$
N_A(\bar x):=\Limsup_{x\to \bar x} \hat N_A(x).
$
The \emph{coderivative} of $S$ at $(\bar x,\bar y)\in \gph S$ is the map $D^*S(\bar x\mid\bar y):\bE_2\rightrightarrows\bE_1$ defined via
\begin{equation}
\label{eq:def_coderivative}
v\in D^*S(\bar x\mid\bar y)(y) \IFF (v,-y)\in N_{\gph S}(\bar x,\bar y).
\end{equation}
The    \emph{graphical derivative} of $S$ at $(\bar x,\bar y)$ is the map $DS(\bar x\mid\bar y):\bE_1\rightrightarrows \bE_2$ given by
\begin{equation}
\label{eq:def_graphical_derivative}
v\in DS(\bar x \mid \bar y)( u) \IFF (u,v) \in T_{\gph S}(\bar x,\bar y).
\end{equation} 
The \emph{strict graphical derivative} of $S$ at $(\bar x,\bar y)$ is  $D_*S(\bar x\mid\bar y):\bE_1\rightrightarrows \bE_2$ given by 
\begin{eqnarray*}
D_*S(\bar x\mid\bar y)(w)=
 \set{z \in \bE_1}{\exists \left\{\begin{array}{l}
 \vspace{0.1cm}\{t_k\}\downarrow 0,\, \{w_k\}\to w,\\ 
 \{z_k\}\to z,\\
  \{(x_k,y_k)\in\gph S\}\to (\bar x,\bar y) \end{array}\right\}: z_k\in\frac{S(x_k+t_kw_k)-y_k}{t_k}}.
\end{eqnarray*}
We adopt the convention to set $D^*S(\bar x):=D^*S(\bar x\mid\bar y)$ if
$S(\bar x)$ is a singleton, and proceed analogously for the graphical
derivatives. We point out that if $S$ is single-valued and continuously
differentiable at $\bar x$, then $DS(\bar x)=D_*S(\bar x)$ coincides with its
derivative at $\bar x$. Moreover, in this case $D^*S(\bar x)=DS(\bar{x})^*$. Therefore, there is, in this case, no ambiguity in notation. More
generally, we will employ the following sum rule for the derivatives introduced
above frequently in our study.

\begin{lemma}[{\cite[Exercise 10.43 (b)]{rockafellar1998variational}}]\label{lem:Sum} Let $S=f+F$ for $f:\bE_1\to\bE_2$, $F:\bE_1\rightrightarrows \bE_2$, and $(\bar x,\bar u)\in \gph S$. Assume that $f$ is continuously differentiable at $\bar x$. Then:
\begin{itemize}
    \item [(a)] $DS(\bar x|\bar u)(w)=Df(\bar x)w+DF(\bar x|\bar u-f(\bar x))(w),\quad \forall w\in \bE_1$;
    \item[(b)]  $D_*S(\bar x|\bar u)(w)=Df(\bar x)w+D_*F(\bar x|\bar u-f(\bar x))(w),\quad  \forall w\in \bE_1$;
    \item[(c)]  $D^*S(\bar x|\bar u)(y)=Df(\bar x)^*y+D^*F(\bar x|\bar u-f(\bar x))(y),\quad \forall y\in \bE_2$.
\end{itemize}
\end{lemma}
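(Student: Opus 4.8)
The plan is to realize $S=f+F$ as a smooth change of coordinates applied to $\gph F$ and then transport each of the three cone-based objects through it. Concretely, I would introduce the map $\Phi:\bE_1\times\bE_2\to\bE_1\times\bE_2$, $\Phi(x,y):=(x,\,y+f(x))$. Since $f$ is continuously differentiable near $\bar x$, $\Phi$ is a $C^1$-diffeomorphism there, with inverse $\Phi^{-1}(x,u)=(x,\,u-f(x))$ and
\[
D\Phi(\bar x,\bar y)=\begin{pmatrix}\identity & 0\\ Df(\bar x) & \identity\end{pmatrix},\qquad D\Phi(\bar x,\bar y)^{-1}=\begin{pmatrix}\identity & 0\\ -Df(\bar x) & \identity\end{pmatrix}.
\]
Setting $\bar y:=\bar u-f(\bar x)$, one checks directly that $(\bar x,\bar y)\in\gph F$, that $\Phi(\bar x,\bar y)=(\bar x,\bar u)$, and, crucially, that $\gph S=\Phi(\gph F)$ (because $u\in S(x)\iff u-f(x)\in F(x)$). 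Each identity (a)--(c) will then follow from the transformation rule for the tangent, paratingent, and limiting normal cone of a set under the diffeomorphism $\Phi$.

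For (a), I would argue straight from \cref{eq:def_graphical_derivative} and the outer-limit description of the tangent cone. A pair $(w,v')$ lies in $T_{\gph S}(\bar x,\bar u)$ iff there are $t_k\downarrow 0$ and $(w_k,v'_k)\to(w,v')$ with $(\bar x+t_kw_k,\;\bar u+t_kv'_k)\in\gph S=\Phi(\gph F)$; rewriting the second coordinate through $\Phi^{-1}$ and using differentiability of $f$ at $\bar x$ to pass $\tfrac{1}{t_k}\bigl(f(\bar x+t_kw_k)-f(\bar x)\bigr)\to Df(\bar x)w$ forces $(w,\,v'-Df(\bar x)w)\in T_{\gph F}(\bar x,\bar y)$. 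Unwinding \cref{eq:def_graphical_derivative} then gives $DS(\bar x\mid\bar u)(w)\subseteq Df(\bar x)w+DF(\bar x\mid\bar y)(w)$, and the reverse inclusion follows for free by applying this same inclusion to the decomposition $F=(-f)+S$. This delivers (a) under mere differentiability of $f$ at $\bar x$, and in fact yields the set equality $T_{\gph S}(\bar x,\bar u)=D\Phi(\bar x,\bar y)\,T_{\gph F}(\bar x,\bar y)$.

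Part (b) is where I expect the real work. The strict graphical derivative is exactly the construction of (a) carried out with a \emph{moving} base point $(x_k,y_k)\in\gph S\to(\bar x,\bar u)$, i.e.\ it reads off the paratingent cone $T^{P}_{\gph S}(\bar x,\bar u)$. The same sequential argument applies, but now the difference quotient to be controlled is $\tfrac{1}{t_k}\bigl(f(x_k+t_kw_k)-f(x_k)\bigr)$ with $x_k\to\bar x$, no longer anchored at $\bar x$. Showing this still converges to $Df(\bar x)w$ is the main obstacle, and it is precisely here that continuous differentiability (rather than differentiability at the single point $\bar x$) is used: by the mean value inequality the quotient differs from $Df(\bar x)w_k$ by at most $\sup_{\xi}\|Df(\xi)-Df(\bar x)\|\,\|w_k\|$, the supremum over a shrinking neighborhood of $\bar x$, which vanishes by continuity of $Df$. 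With this uniform control, the equivalence $(w,v')\in T^{P}_{\gph S}(\bar x,\bar u)\iff(w,\,v'-Df(\bar x)w)\in T^{P}_{\gph F}(\bar x,\bar y)$ follows, and translating through the definition of $D_*$ gives (b).

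For (c) I would pass to polars. The regular normal cone is the polar of the tangent cone, and for an invertible linear map $M$ one has $(M\,K)^{\circ}=(M^{*})^{-1}K^{\circ}$; applying this with $M=D\Phi(\bar x,\bar y)$ to the tangent-cone identity from (a) gives $\hat N_{\gph S}(\bar x,\bar u)=(D\Phi(\bar x,\bar y)^{*})^{-1}\hat N_{\gph F}(\bar x,\bar y)$, and the same relation propagates to the limiting normal cones because $D\Phi$ varies continuously in the base point (again $C^1$), so the defining outer limit is preserved. Since $D\Phi(\bar x,\bar y)^{*}(v,-y)=(v-Df(\bar x)^{*}y,\,-y)$, the defining equivalence \cref{eq:def_coderivative} yields $v\in D^*S(\bar x\mid\bar u)(y)\iff v-Df(\bar x)^{*}y\in D^*F(\bar x\mid\bar y)(y)$, which is exactly (c). Throughout, the only genuinely delicate point is the uniform passage to the limit of the difference quotient of $f$ in part (b); (a) and (c) are then bookkeeping with the (adjoint) Jacobian of $\Phi$.
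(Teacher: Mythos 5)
Your proposal is correct, but note that the paper itself offers no proof of \cref{lem:Sum}: the result is imported wholesale from \cite[Exercise~10.43(b)]{rockafellar1998variational}, so the only comparison to be made is with that reference. Your change-of-coordinates argument --- realizing $\gph S$ as the image of $\gph F$ under the $C^1$-diffeomorphism $\Phi(x,y)=(x,\,y+f(x))$ and transporting the tangent, paratingent, and limiting normal cones through $D\Phi$ --- is precisely the standard route to that exercise, so in substance you have reconstructed the intended argument rather than found a new one. Two points in your write-up deserve emphasis because they are exactly where care is needed. First, your observation that part (a) requires only differentiability of $f$ at $\bar x$, with equality obtained for free by applying the one proved inclusion to the decomposition $F=(-f)+S$, is the clean way to avoid proving two inclusions by hand, and your identification of part (b) as the place where continuity of $Df$ genuinely enters (to control $\tfrac{1}{t_k}\bigl(f(x_k+t_kw_k)-f(x_k)\bigr)$ at moving base points via the mean-value inequality) is accurate. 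Second, in part (c) the passage from the regular-normal identity at nearby graph points to the limiting normal cone needs both that $f$ be differentiable in a whole neighborhood of $\bar x$ (implicit in ``continuously differentiable at $\bar x$,'' and needed so that the tangent-cone identity of (a) holds at those nearby points) and an interchange of the outer limit with the continuously varying invertible maps $(D\Phi(x,y)^{*})^{-1}$; the latter is routine --- if $v_k\to v$ with $D\Phi(x_k,y_k)^{*}v_k\in\hat N_{\gph F}(x_k,y_k)$, then $D\Phi(x_k,y_k)^{*}v_k\to D\Phi(\bar x,\bar y)^{*}v$ by continuity and boundedness, and symmetrically for the inverses --- but it is the one step your sketch gestures at rather than performs. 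With that detail written out, the proof is complete; what the self-contained argument buys over the paper's bare citation is a transparent record of exactly where continuous differentiability, as opposed to mere differentiability at $\bar x$, is used.
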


\subsection{Convex analysis tools} 
\noindent
We first present some fundamental concepts associated with convex sets \cite{rockafellar1970convex}.
For a convex  set $C\subseteq \R^n$  and $\bar x\in C$ we define:
\begin{itemize}
\item[(a)] the \emph{affine hull} of $C$, \ie{} is the smallest affine set that contains $C$, by $\aff C$;

\item[(b)] the \emph{subspace parallel to $C$} by 
$
\para C:=\lin(C-\bar x)=\aff C -\bar x;
$
\item[(c)] its \emph{relative interior} by  
$
\ri C:=\set{x\in C}{\exists \varepsilon>0: \; B_{\varepsilon}(x)\cap \aff C\subseteq C}.
$ 
\end{itemize}
We note that $\inter C\neq \emptyset$ if and only if $\para C=\R^n$, in which case $\ri C=\inter C$. We call a function $f:\R^n\to \rp$ \emph{proper} if  $\dom f:=\set{x\in \R^n}{f(x)<+\infty}$ is nonempty. It is called  \emph{convex} if its \emph{epigraph} $\epi f:=\set{(x,\alpha)\in \R^n\times \R} {f(x)\leq \alpha}$ is a convex set. As a first, yet central, example of an  extended real-valued function, we consider the \emph{indicator (function)} $\delta_C:\R^n\to \rp$ of  $C\subseteq \R$ given by
\[
\delta_C(x)=
\begin{cases}0& x\in C\\
+\infty& \text{else},
\end{cases}
\]
which is proper and convex if and only if $C$ is nonempty and convex. The \emph{(convex) subdifferential} of $f:\R^n\to \rp$ at $\bar x\in \dom f$ is given by
\[
\p f(\bar x)=\set{v\in \R^n}{f(\bar x)+\ip{v}{x-\bar x}\leq f(x)\;\forall x\in \dom f}.
\]
For instance, the subdifferential of the indicator function of a convex set $C\subseteq \R^n$  at $\bar x\in C$  is the normal cone of $C$ at $\bar x$, \ie{}
\[
\p\delta_C(\bar x)=\set{v\in \R^n}{\ip{v}{x-\bar x}\leq 0\quad \forall x\in C}=N_C(\bar x).
\]
The two most important examples to our study  are the Euclidean norm $\|\cdot\|$ whose subdifferential is given by
 \begin{equation}\label{eq:SD2Norm} 
 \p \|\cdot\|(z)=\begin{cases}\left\{\frac{z}{\|z\|}\right\} & z\neq 0\\ \bB & z=0,
 \end{cases}
\end{equation}
and the $\ell_1$-norm whose subdifferential is presented in the next result.

\begin{lemma}[Subdifferential of $\ell_1$-norm]
  \label{lem:SD1Norm}
  Let $z\in \R^n$ and set $I:=\supp(z):$
  \begin{itemize}
  \item[(a)] $\p\|\cdot\|_1(z)=\bigtimes_{i=1}^n \left.\begin{cases} \sgn(z_i),& z_i\neq 0,\\
                                                         [-1,1],& z_i=0
                                                       \end{cases}\right\}$;

  \item[(b)] $\ri  \p\|\cdot\|_1(z)=\bigtimes_{i=1}^n\left.\begin{cases} \sgn(z_i),& z_i\neq 0,\\
(-1,1),& z_i=0
\end{cases}\right\}$;

  \item[(c)] $\para \p\|\cdot\|_1(z)=\set{v\in \R^n}{v_{I}=0}$.
\end{itemize}

\end{lemma}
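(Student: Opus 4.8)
The plan is to exploit the \emph{separability} of the $\ell_1$-norm across coordinates, so that each of the three assertions reduces to a one-dimensional computation combined with the standard behaviour of the subdifferential, the relative interior, and the parallel subspace under Cartesian products. Writing $\|z\|_1=\sum_{i=1}^n |z_i|$ as a sum of functions, each depending on a single coordinate, is the organizing idea.

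First I would record the scalar fact that the absolute value $|\cdot|:\R\to\R$ satisfies
\[
\p|\cdot|(t)=\begin{cases}\{\sgn(t)\},& t\neq 0,\\ [-1,1],& t=0,\end{cases}
\]
which for $t\neq 0$ is just differentiability with derivative $\sgn(t)$, and for $t=0$ follows directly from the definition, since $v\in\p|\cdot|(0)$ holds iff $vs\leq |s|$ for all $s\in\R$, \ie{} $|v|\leq 1$. The separable sum rule for convex subdifferentials then yields $\p\|\cdot\|_1(z)=\bigtimes_{i=1}^n \p|\cdot|(z_i)$, which is precisely the description in (a). This sum rule can either be cited or verified directly: testing the subgradient inequality $\|z\|_1+\ip{v}{x-z}\leq\|x\|_1$ along perturbations in a single coordinate shows $v_i\in\p|\cdot|(z_i)$ for each $i$, and summing the coordinatewise inequalities gives the converse.

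Part (b) would then follow by invoking $\ri\bigl(\bigtimes_i C_i\bigr)=\bigtimes_i \ri C_i$ for finitely many convex sets, together with $\ri\{\sgn(z_i)\}=\{\sgn(z_i)\}$ and $\ri[-1,1]=(-1,1)$. Part (c) would follow analogously from the fact that $\para$ commutes with Cartesian products: since $\para\{\sgn(z_i)\}=\{0\}$ for $i\in I$ and $\para[-1,1]=\R$ for $i\notin I$, we obtain $\para\p\|\cdot\|_1(z)=\set{v\in\Rn}{v_i=0\ \text{for all}\ i\in I}=\set{v\in\Rn}{v_I=0}$, as claimed.

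The computation is essentially routine, and I do not anticipate a genuine obstacle; the only points deserving care are the justification of the separable sum rule and the product identities for $\ri$ and $\para$. As a cross-check for (a), I would keep in mind the conjugate-based derivation: using $\|\cdot\|_1^*=\delta_{\bB_\infty}$, one has $v\in\p\|\cdot\|_1(z)$ iff $\|v\|_\infty\leq 1$ and $\ip{v}{z}=\|z\|_1$, and the equality case in $v_iz_i\leq|z_i|$ forces $v_i=\sgn(z_i)$ whenever $z_i\neq 0$ while leaving $v_i\in[-1,1]$ free when $z_i=0$. Either route delivers (a), after which (b) and (c) are immediate from the product structure.
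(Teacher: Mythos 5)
Your proof is correct. Note, however, that the paper offers no proof of this lemma at all: it is stated as a known fact of convex analysis (in the spirit of the surrounding references to \cite{rockafellar1970convex}), so there is no in-paper argument to compare against. Your route — the scalar computation for $\p|\cdot|$, the separable sum rule, and the product identities $\ri\bigl(\bigtimes_i C_i\bigr)=\bigtimes_i\ri C_i$ and $\para\bigl(\bigtimes_i C_i\bigr)=\bigtimes_i\para C_i$ — is the standard derivation and is sound; the conjugate-based cross-check using $\|\cdot\|_1^*=\delta_{\bB_\infty}$ from \cref{eq:NormConj} is an equally valid alternative that matches the duality machinery the paper itself sets up.
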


\noindent
The \emph{(Fenchel) conjugate} of $f:\R^n\to \rp$ is the function $f^*:\R^n\to \rp$,
\[
f^*(y):=\sup_{x\in \dom f}\{\ip{y}{x}-f(x)\}.
\]
If $f$ is proper and convex, then so is $f^*$ (which is always lower
semicontinuous).  Of special interest is the conjugacy relation between
indicator and support functions. For any set $C\subseteq \R^n$, its support
function is $\sig_C(y):= \sup_{x\in C}\ip{x}{y}=\delta^*_C(y)$.  We have
$\sigma_C^*(y)=\delta_{C}(x)$ if (and only if) $C$ is nonempty, closed and
convex.  This is relevant to our study as every norm is the support function of
a symmetric, convex, compact set $C$ with $0\in \inter C$. In particular,
$\|\cdot\|_p=\sig_{\bB_q}$ for all $1\leq p,q\leq\infty$ and
$\frac{1}{p}+\frac{1}{q}=1$\footnote{Formally setting $1/\infty:=0$.}, eg{}
see~\cite[11(12)]{rockafellar1998variational}. Consequently, we have
\begin{equation}\label{eq:NormConj}
\|\cdot\|^*=\delta_\bB \AND \|\cdot\|_1^*=\delta_{\bB_\infty}.
\end{equation}

\section{Uniqueness of solutions  and regularity conditions for \srlasso{}}
\label{sec:uniqueness-solutions}

This section establishes a sufficient condition for \srlasso{} solution uniqueness, then introduces two stronger point-based regularity conditions and their relationship.  

\subsection{Uniqueness of solutions}

In this section, we provide conditions that guarantee a given solution of
\srlasso{} is unique. The key observation is that the normalized residual is, in
essence, an invariant for a given problem instance. This fact can be seen
through (Fenchel-Rockafellar) duality (\eg{}
see~\cite[Chapter~11]{rockafellar1998variational}), which we establish for the
\srlasso{}~\cref{eq:SR-LASSO} here.

\begin{proposition}[Fenchel-Rockafellar duality scheme for~\cref{eq:SR-LASSO}]
  \label{prop:FR}
  We have:
  \begin{itemize}
  \item[(a)] The (Fenchel-Rockafellar) dual problem to~\cref{eq:SR-LASSO} is
    \begin{equation}
      \label{eq:srlasso-dual}
      \max_{y\in \R^m}\; \ip{b}{y}\st A^{\top}y\in \lambda \bB_\infty, \quad y\in \bB.
    \end{equation}
  \item[(b)] For $(\bar x,\bar y)$ the following are equivalent:
    \begin{itemize}
    \item[(i)] $\bar x$ solves~\cref{eq:SR-LASSO}, $\bar y$ solves~\cref{eq:srlasso-dual}.
    \item[(ii)] $\|A\bar x-b\|+\lambda \|\bar x\|_1=\ip{b}{\bar y}$, and
      $\bar y$ is feasible for~\cref{eq:srlasso-dual}.
    \item[(iii)] $A^{\top}\bar y \in \lambda \p\|\cdot\|_1(\bar x)$,\;
      $-\bar y\in \p \|(\cdot)-b\|(A\bar x)$.
    \item[(iv)] $\bar x\in N_{\lambda \bB_\infty}(A^{\top}\bar y),$\;
      $b-A\bar x\in N_{\bB}(\bar y)$.
    \end{itemize}

  \end{itemize}
\end{proposition}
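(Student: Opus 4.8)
The plan is to realize \cref{eq:SR-LASSO} as a Fenchel--Rockafellar primal problem $\inf_x\, f(x)+g(Ax)$ with $f:=\lambda\|\cdot\|_1$ and $g:=\|(\cdot)-b\|$, and to read off the dual from the conjugates of $f$ and $g$. For (a) I would first compute these conjugates using the support/indicator conjugacy recorded in \cref{eq:NormConj}: since $\lambda\|\cdot\|_1=\sigma_{\lambda\bB_\infty}$ one gets $f^*=\delta_{\lambda\bB_\infty}$, while the linear shift in $g$ yields $g^*(y)=\ip{b}{y}+\delta_{\bB}(y)$ (the $\|\cdot\|^*=\delta_\bB$ part again coming from \cref{eq:NormConj}). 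Feeding these into the Fenchel dual $\sup_y\{-f^*(-A^\top y)-g^*(y)\}$ produces $\max_y -\ip{b}{y}$ subject to $A^\top y\in\lambda\bB_\infty$ and $y\in\bB$; since both balls are symmetric, the substitution $y\mapsto-y$ turns this into exactly \cref{eq:srlasso-dual}. Because $\dom f=\R^n$ and $\dom g=\R^m$, the relative-interior constraint qualification for Fenchel duality holds trivially, so strong duality holds with zero gap and the dual is attained; I would record this now, as it is needed in (b).

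For part (b) I would organize the equivalences around weak duality. First I would prove weak duality by the elementary chain, valid for every $x$ and every dual-feasible $y$,
\begin{equation*}
\ip{b}{y}=\ip{x}{A^\top y}+\ip{b-Ax}{y}\le \lambda\|x\|_1+\|Ax-b\|,
\end{equation*}
where the first term is bounded by H\"older's inequality together with $\|A^\top y\|_\infty\le\lambda$, and the second by Cauchy--Schwarz together with $\|y\|\le1$. The equivalence (i)$\iff$(ii) is then immediate: (ii)$\Rightarrow$(i) follows by comparing values, since feasibility of $\bar y$ and the equality $\|A\bar x-b\|+\lambda\|\bar x\|_1=\ip{b}{\bar y}$ force both optimality statements through the weak-duality bound, and (i)$\Rightarrow$(ii) is exactly the zero-gap property established in (a).

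The passage to the pointwise conditions (iii) and (iv) comes from turning the two inequalities above into equalities. Zero gap forces both H\"older and Cauchy--Schwarz to hold with equality; equality in the first, with $A^\top\bar y\in\lambda\bB_\infty$, is precisely the Fenchel--Young equality characterizing $A^\top\bar y\in\lambda\p\|\cdot\|_1(\bar x)$, and equality in the second, with $\|\bar y\|\le1$, is the Fenchel--Young equality characterizing $-\bar y\in\p\|(\cdot)-b\|(A\bar x)$; this yields (ii)$\iff$(iii). Finally I would obtain (iii)$\iff$(iv) from the inversion rule $v\in\p h(z)\iff z\in\p h^*(v)$: applied to $h=\lambda\|\cdot\|_1$ (so $h^*=\delta_{\lambda\bB_\infty}$ and $\p h^*=N_{\lambda\bB_\infty}$) it turns the first condition into $\bar x\in N_{\lambda\bB_\infty}(A^\top\bar y)$, and applied to $h=\|(\cdot)-b\|$ (so $h^*=\ip{b}{\cdot}+\delta_\bB$ and $\p h^*=b+N_\bB$), together with the symmetry $N_\bB(-\bar y)=-N_\bB(\bar y)$, it turns the second into $b-A\bar x\in N_\bB(\bar y)$.

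The main obstacle is bookkeeping of signs so that the dual lands in exactly the stated form: the standard Fenchel dual gives $\max_y\ip{b}{y}$ only after the substitution $y\mapsto-y$, which is legitimate precisely because the feasible set $\{y\in\bB : A^\top y\in\lambda\bB_\infty\}$ is symmetric. A secondary care point is the conjugate of the shifted norm $\|(\cdot)-b\|$, where the shift is what produces the linear objective $\ip{b}{y}$ and, in (iv), the offset $b$ in $\p g^*=b+N_\bB$; getting these offsets and the symmetric normal-cone identity right is the only genuinely non-mechanical part of the argument.
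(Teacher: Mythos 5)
Your proof is correct, and it is built on the same skeleton as the paper's: the decomposition $f(x)+g(Ax)$ with $f=\lambda\|\cdot\|_1$ and $g=\|(\cdot)-b\|$, the conjugates $f^*=\delta_{\lambda \bB_\infty}$ and $g^*=\ip{b}{\cdot}+\delta_{\bB}$ obtained from \cref{eq:NormConj}, and strong duality justified by the primal functions being finite-valued. The difference is in execution: the paper's proof is a one-line appeal to the ready-made Fenchel-type duality scheme of \cite[Example~11.41]{rockafellar1998variational}, which packages the dual problem, the zero duality gap, and the subgradient optimality conditions (i)--(iv) all at once (in combination with \cref{lem:SD1Norm}, \cref{eq:SD2Norm} and \cref{eq:NormConj}), whereas you re-derive that scheme from first principles: an elementary weak-duality chain (H\"older plus Cauchy--Schwarz), the equivalence (ii)$\iff$(iii) via the equality case of Fenchel--Young, and (iii)$\iff$(iv) via the inversion rule $v\in\p h(z)\iff z\in\p h^*(v)$. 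Your version is longer but self-contained, and it makes transparent exactly where each optimality condition comes from, namely as the tightness of the two inequalities in the weak-duality chain; the paper's version is brief but opaque without the reference in hand. One cosmetic remark: the $y\mapsto -y$ substitution you need (legitimate, as you note, by symmetry of the two balls) is an artifact of the dual convention $\sup_y\{-f^*(-A^{\top}y)-g^*(y)\}$; the scheme in \cite[Example~11.41]{rockafellar1998variational} is formulated for primals of the form $\inf_x\{k(x)+h(b-Ax)\}$, whose dual $\sup_y\{\ip{b}{y}-h^*(y)-k^*(A^{\top}y)\}$ lands in the stated form \cref{eq:srlasso-dual} directly, so the sign bookkeeping you flag as the delicate point disappears with that normalization.
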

\begin{proof}
  Apply~\cite[Example 11.41]{rockafellar1998variational} in combination
  with~\Cref{lem:SD1Norm}, \cref{eq:SD2Norm} and~\cref{eq:NormConj}, and realize
  that strong duality holds (as the the primal functions are finite-valued),
  \ie{} primal and dual optimal value are identical, which explains the
  equivalence of (i) and (ii).
\end{proof}

\noindent
We now present the advertised invariance of the normalized residual.

\begin{corollary}
  \label{cor:Invar}
  If there exists a solution $\hat x$ of~\cref{eq:SR-LASSO} with
  $A\hat x-b\neq 0$, then:
  \begin{itemize}
  \item[(a)] The dual problem~\cref{eq:srlasso-dual} has a unique solution
    $\bar y$ with $\|\bar y\|=1$.
  \item[(b)] Every solution $\bar x$ of~\cref{eq:SR-LASSO} satisfies
    $b-A\bar x=\|A\bar x-b\|\bar y$.
  \end{itemize}
\end{corollary}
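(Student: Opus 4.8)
The plan is to extract everything from the primal–dual optimality conditions in \cref{prop:FR}(b), combined with the explicit subdifferential of the Euclidean norm from \cref{eq:SD2Norm}. The crucial structural input is strong duality, which was recorded in the proof of \cref{prop:FR}: since primal and dual optimal values coincide, any primal solution $\bar x$ together with any dual solution $\bar y$ satisfies condition (ii) of \cref{prop:FR}(b), hence the equivalent stationarity condition (iii). In particular, for every such optimal pair we have $-\bar y\in\p\|(\cdot)-b\|(A\bar x)=\p\|\cdot\|(A\bar x-b)$, where the last equality is the translation rule for the subdifferential.

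For part (a), I would fix the distinguished solution $\hat x$ with $A\hat x-b\neq 0$ and pair it with an \emph{arbitrary} dual optimal $\bar y$. By \cref{eq:SD2Norm}, the set $\p\|\cdot\|(A\hat x-b)$ is the singleton $\left\{\frac{A\hat x-b}{\|A\hat x-b\|}\right\}$, so (iii) forces $-\bar y=\frac{A\hat x-b}{\|A\hat x-b\|}$, that is, $\bar y=\frac{b-A\hat x}{\|A\hat x-b\|}$. Since this expression does not depend on the choice of dual solution, it simultaneously establishes that $\bar y$ is unique and that $\|\bar y\|=\frac{\|b-A\hat x\|}{\|A\hat x-b\|}=1$.

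For part (b), I would take an arbitrary primal solution $\bar x$ and pair it with this now-unique $\bar y$. If $A\bar x\neq b$, the same application of \cref{eq:SD2Norm} to condition (iii) yields $-\bar y=\frac{A\bar x-b}{\|A\bar x-b\|}$, which rearranges to $b-A\bar x=\|A\bar x-b\|\,\bar y$. In the degenerate subcase $A\bar x=b$, both sides of the asserted identity vanish and it holds trivially, so no separate treatment is required. The only delicate point in the whole argument is the claim that the fixed residual-carrying solution $\hat x$ stationarily pairs with \emph{every} dual optimal $\bar y$; this is precisely where zero duality gap is used, and it is already guaranteed by the strong duality in \cref{prop:FR}, so the remaining work is purely a matter of reading off the singleton subdifferential.
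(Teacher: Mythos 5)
Your proposal is correct and follows essentially the same route as the paper: both pair the fixed residual-carrying solution $\hat x$ with an arbitrary dual optimum via the optimality conditions of \cref{prop:FR}(b), read off the singleton subdifferential $\p\|\cdot\|(A\hat x-b)=\bigl\{\tfrac{A\hat x-b}{\|A\hat x-b\|}\bigr\}$ to get uniqueness and $\|\bar y\|=1$, and then apply the same identity to any other primal solution for part (b). The only (minor, welcome) difference is that you explicitly dispose of the degenerate case $A\bar x=b$, which the paper's proof of (b) leaves implicit.
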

\begin{proof}
  (a) Observe that \emph{every} dual solution $\bar y$, by \Cref{prop:FR}(b),
  must satisfy
  \[
    -\bar y\in \p \|\cdot\|(A\hat x-b)=\left\{\frac{A\hat x-b}{\|A\hat x-b\|}\right\}.
  \]
  (b) Let $\bar y$ be the unique dual solution. Pick any $\bar x$ that
  solves~\cref{eq:SR-LASSO} such that $A\bar x-b\neq 0$. Then, by
  \Cref{prop:FR} (b), it follows that $\bar y=\frac{b - A\bar x}{\|A\bar x-b\|}$.
\end{proof}

\noindent
We  record a simple linear-algebraic fact.

\begin{lemma}
  \label{lem:Aux}
  Let $0\in A\subseteq\R^n$ and let $U\subseteq \R^n$ be a subspace. Then
  $\lin(U+A)=U+\lin A$.
\end{lemma}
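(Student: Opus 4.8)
The plan is to prove the set equality $\lin(U+A)=U+\lin A$ by establishing the two inclusions separately, leaning on two elementary facts from linear algebra. First, the sum of two subspaces is again a subspace, so $U+\lin A$ is a subspace of $\R^n$. Second, $\lin S$ is the \emph{smallest} subspace containing a set $S$, so that whenever a subspace $V$ satisfies $S\subseteq V$, one automatically gets $\lin S\subseteq V$; I will use this ``minimality'' in both directions to avoid manipulating individual linear combinations.

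For the inclusion $\lin(U+A)\subseteq U+\lin A$, I would start from $A\subseteq \lin A$, which immediately gives $U+A\subseteq U+\lin A$. Since the right-hand side is a subspace containing $U+A$, applying the minimality of the span yields $\lin(U+A)\subseteq U+\lin A$. This direction uses neither $0\in A$ nor the subspace structure of $U$ beyond $U+\lin A$ being a subspace.

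For the reverse inclusion $U+\lin A\subseteq \lin(U+A)$, the idea is to realize both generating sets $U$ and $A$ inside $U+A$. Because $U$ is a subspace we have $0\in U$, so $A=0+A\subseteq U+A$; and by hypothesis $0\in A$, so $U=U+0\subseteq U+A$. Hence $U\cup A\subseteq U+A$, and passing to spans gives $\lin(U\cup A)\subseteq \lin(U+A)$. It then remains only to identify the left-hand side: using the standard identity $\lin(U\cup A)=\lin U+\lin A$ together with $\lin U=U$, I obtain $U+\lin A=\lin(U\cup A)\subseteq \lin(U+A)$. Combining the two inclusions closes the argument.

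I do not anticipate a genuine obstacle, as the statement is a routine linear-algebra fact; the only point deserving attention is the precise role of the hypothesis $0\in A$. It enters exactly in the reverse inclusion, where (together with $0\in U$) it lets both $U$ and $A$ be absorbed into the single sumset $U+A$ via $U=U+0$ and $A=0+A$. In particular it guarantees $A\neq\emptyset$, which is the genuine degenerate case: for $A=\emptyset$ one would have $\lin(U+A)=\{0\}$ while $U+\lin A=U$, and the identity would fail unless $U=\{0\}$.
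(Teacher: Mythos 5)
Your proof is correct and follows essentially the same route as the paper's: the forward inclusion via $U+A\subseteq U+\lin A$ and minimality of the span, and the reverse inclusion by using $0\in U$ and $0\in A$ to embed both $U$ and $A$ (hence their spans) into $\lin(U+A)$, together with the span-of-union-equals-sum-of-spans identity. Your explicit remark on why $0\in A$ is needed (and the failure for $A=\emptyset$) is a nice addition but does not change the substance of the argument.
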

\begin{proof}
  First, observe that $U+A\subseteq U+\lin A$, hence
  $\lin(U+A)\subseteq U+\lin A$.

  In turn, observe that $U\subseteq U+A\subseteq \lin(U+A)$ and
  $\lin A\subseteq \lin(U+A)$ (since both $U$ and $A$ contain
  $0$). Consequently, we have
  \[
    U+\lin A=\lin (U\cup \lin A)\subseteq \lin (U+A).
  \]
\end{proof}

\noindent
The main result on uniqueness relies on the following regularity condition that we impose at a solution $\bar x$ of~\cref{eq:SR-LASSO}.

\setcounter{assumption}{0}

\begin{assumption}[Weak]
  \label{ass:weak}
  For  a solution $\bar x$ of~\cref{eq:SR-LASSO} with $I := \supp(\bar x)$ we have: 
  \begin{itemize}
  \item[(i)] $\ker A_I = \{0\} \AND b \notin \rge A_I$; 
  \item[(ii)]
    $\exists z\in \left\{\frac{b-A\bar x}{\|A\bar x-b\|}\right\}^\perp \cap \ker A_{I}^{\top}: \left\|A_{I^C}^{\top} \left( \frac{b - A\bar{x}}{\|A\bar{x} - b\|} + z\right) \right\|_{\infty} < \lambda$.
  \end{itemize}
\end{assumption}

\noindent
We are now in a position to present the advertised uniqueness result. 

\begin{theorem}[Uniqueness of solutions]
  \label{th:sufficient-condition-for-uniqueness}
  Under \cref{ass:weak}, $\bar x$ is the unique solution of~\cref{eq:SR-LASSO}.
\end{theorem}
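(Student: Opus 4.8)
The plan is to combine the residual invariance of \Cref{cor:Invar} with the strict dual certificate hidden inside \cref{ass:weak}(ii). Write $\bar r := \|A\bar x - b\|$ and $\bar y := \frac{b - A\bar x}{\bar r}$. Note that \cref{ass:weak}(i) forces $A\bar x = A_I \bar x_I \in \rge A_I$ while $b \notin \rge A_I$, so $A\bar x \neq b$, i.e.\ $\bar r > 0$, and \Cref{cor:Invar} applies: $\bar y$ is the \emph{unique} dual solution, $\|\bar y\| = 1$, and every solution $\tilde x$ satisfies $A\tilde x = b - \|A\tilde x - b\|\,\bar y$.

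First I would package (ii) into a single vector. Set $c := A^{\top}(\bar y + z)$ with $z$ as in (ii). Since $z \in \ker A_I^{\top}$, we have $c_i = A_i^{\top}\bar y$ for $i \in I$, and \Cref{prop:FR}(b)(iii) (applied to the solution $\bar x$ and the unique dual solution $\bar y$) gives $A_i^{\top}\bar y = \lambda\sgn(\bar x_i)$ there; hence $c_i = \lambda\sgn(\bar x_i)$ for $i \in I$ and, directly from (ii), $|c_i| < \lambda$ for $i \in I^C$. In particular $\|c\|_\infty \le \lambda$ and $\ip{\bar x}{c} = \lambda\|\bar x\|_1$.

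Now let $\tilde x$ be any solution and put $r := \|A\tilde x - b\|$. The invariance gives $A(\tilde x - \bar x) = (\bar r - r)\bar y$, so using $\ip{\bar y}{z} = 0$ and $\|\bar y\| = 1$,
\[
\ip{\tilde x - \bar x}{c} = \ip{A(\tilde x - \bar x)}{\bar y + z} = \bar r - r.
\]
On the other hand, since $\bar x$ and $\tilde x$ share the optimal value, $\bar r + \lambda\|\bar x\|_1 = r + \lambda\|\tilde x\|_1$, i.e.\ $\bar r - r = \lambda(\|\tilde x\|_1 - \|\bar x\|_1)$; combined with $\ip{\bar x}{c} = \lambda\|\bar x\|_1$ this yields $\ip{\tilde x}{c} = \lambda\|\tilde x\|_1$. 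Because $|c_i| \le \lambda$ for every $i$, the identity $\sum_i(\lambda|\tilde x_i| - \tilde x_i c_i) = 0$ forces each nonnegative summand $|\tilde x_i|(\lambda - |c_i|) \ge 0$ to vanish, and the strict inequality $|c_i| < \lambda$ on $I^C$ then forces $\tilde x_i = 0$ for $i \in I^C$; that is, $\supp(\tilde x) \subseteq I$.

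Finally I would invoke (i) once more. As $\tilde x - \bar x$ is supported on $I$, the vector $A(\tilde x - \bar x) = (\bar r - r)\bar y$ lies in $\rge A_I$; but $\bar y \notin \rge A_I$ (else $b = (b - A\bar x) + A\bar x \in \rge A_I$, contradicting (i)), so $\bar r = r$ and hence $A_I(\tilde x - \bar x)_I = 0$. Since $\ker A_I = \{0\}$, we conclude $\tilde x = \bar x$. I expect the main obstacle to be the correct bookkeeping of the auxiliary vector $z$: one must see that its two defining properties, $z \in \ker A_I^{\top}$ and $z \perp \bar y$, are exactly what make $c$ agree with $\lambda\,\p\|\cdot\|_1(\bar x)$ on $I$ and make the pairing $\ip{A(\tilde x - \bar x)}{\bar y + z}$ collapse to the scalar gap $\bar r - r$; the remainder is the standard strict-certificate-plus-injectivity argument.
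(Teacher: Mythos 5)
Your proof is correct, but it takes a genuinely different route from the paper's. The paper reduces uniqueness to an auxiliary convex program, \cref{eq:R}, whose solution set contains all solutions of~\cref{eq:SR-LASSO}, and then invokes Gilbert's interiority criterion: $\bar x$ is the unique solution of~\cref{eq:R} if and only if $0\in \inter \p\psi(\bar x)$. This membership is split into a span condition, $\lin \p\psi(\bar x)=\R^n$, and a relative-interior condition, $0\in \ri\p\psi(\bar x)$, which are then translated into parts (i) and (ii) of \cref{ass:weak} via \Cref{lem:Aux}, \Cref{lem:SD1Norm}, and \Cref{cor:Equiv}. You instead give a direct, elementary strict-certificate argument: the residual invariance of \Cref{cor:Invar} collapses every solution onto the ray $A\tilde x = b - r\bar y$; the vector $c = A^{\top}(\bar y + z)$ built from \cref{ass:weak}(ii) satisfies $\ip{\bar x}{c}=\lambda\|\bar x\|_1$, $\|c\|_\infty\leq\lambda$, and $|c_i|<\lambda$ on $I^C$; equal optimal values plus $z\perp\bar y$, $z\in\ker A_I^{\top}$ turn the pairing $\ip{\tilde x-\bar x}{c}$ into the residual gap $\bar r - r$, forcing $\ip{\tilde x}{c}=\lambda\|\tilde x\|_1$ and hence $\supp(\tilde x)\subseteq I$; finally $\bar y\notin\rge A_I$ and $\ker A_I=\{0\}$ pin down $\tilde x=\bar x$. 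Your approach is self-contained (it needs only \Cref{prop:FR}, \Cref{cor:Invar}, and elementary inequalities) and mirrors the classical dual-certificate uniqueness proofs for LASSO; it also handles the edge cases ($r=0$, $I=\emptyset$) uniformly without case analysis. What the paper's heavier route buys is structural insight: since Gilbert's criterion is an equivalence for the auxiliary problem, that proof isolates exactly where sufficiency could fail to be necessary for \cref{eq:SR-LASSO} itself (solutions of \cref{eq:R} need not solve the \srlasso{}), which is precisely the open question on necessity raised in the paper's conclusion; your argument, being purely one-directional, does not expose this.
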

\begin{proof}
  Let $\bar y:=\frac{b-A\bar x}{\|A\bar x-b\|}$ denote the unique dual solution
  and consider the auxiliary problem with
  $\cS:=N_{\bB}(\bar y)=\R_{+}\{\bar y\}$:
  \begin{equation}
    \label{eq:R}
    \min_{x\in \R^n}\psi(x):=\lambda \|x\|_1-\ip{A^{\top}\bar y}{x}+\delta_{\cS}(b-Ax).
  \end{equation}
  The optimality conditions for~\cref{eq:R} read
 \[
   0%
   \in \p\psi(x) %
   = \lambda \p\|\cdot\|_1(x)-A^{\top}\bar y-A^{\top}N_{\cS}(b-Ax),%
   \quad b-Ax\in\cS.
 \]
 Using \cref{prop:FR}(b) and the fact that $0\in N_{\cS}(b-Ax)$, we see that
 every solution of~\cref{eq:SR-LASSO} solves~\cref{eq:R}. Now, $\bar x$ is the
 unique solution of~\cref{eq:R} if (and only if)
 $0\in \inter \p \psi(\bar x)$~\cite[Lemma 3.2]{gilbert2017solution}.  Since
 $A\bar x-b\neq 0$, we find that $ N_\cS(b-A\bar x)=\{\bar y\}^\perp=\rge P$,
 where $P:=\identity - \bar{y}\bar{y}^{\top}$ is the orthogonal projection onto
 $\{\bar y\}^\perp$. We now observe that $0\in \inter \p \psi(\bar x)$ if and
 only if the following two conditions hold:
 \[ \lin \p \psi(\bar x)=\R^n\AND 0\in \ri \p \psi(\bar x). \] Since our
 assumptions imply that $N_{\cS}(b-A\bar x)=\rge P$, we find that
 \begin{eqnarray*}
   \lin \p \psi(\bar x)=\R^n &\iff &\para \p\|\cdot\|_1(\bar x)+\rge (A^{\top}P)=\R^n\\
    &\iff &\left( \para\p\|\cdot\|_1(\bar x)\right)^\perp\cap \ker (PA)=\{0\}\\
    &\iff & \ker (PA_I)=\{0\}\\
    & \iff & \ker A_I=\{0\}\AND b\notin \rge A_I.
 \end{eqnarray*}
 Here the first identity uses \Cref{lem:Aux} and the fact that
 $A^{\top}\bar y\in \lambda\p\|\cdot\|_1(\bar x)$. The second one follows by
 taking orthogonal complements on both sides. The penultimate equivalence uses
 \Cref{lem:SD1Norm}(c), and the last equivalence uses \Cref{cor:Equiv}.

 In addition, we observe that
 \begin{eqnarray*}
   0\in \ri \p \psi(\bar x) &\iff &\exists z\in \{\bar y\}^\perp:  A_i^{\top}\bar y\in \lambda\ri\p|\cdot|(\bar x_i)- A_i^{\top}z,
 \end{eqnarray*}
 which can be seen to be equivalent to (ii) by \Cref{lem:SD1Norm} (b).
\end{proof}

\begin{remark}
  Note that for $I=\emptyset$ (\ie{} $\bar x=0$), condition (i) is vacuously
  satisfied, so the sufficient conditions collapse to
  \begin{equation}
    \label{eq:weak0}
    \exists z\in \{b\}^\perp: \|A_{I^C}^{\top}(b/\|b\|+z)\|_\infty<\lambda.
  \end{equation}
  The former corresponds to the fact that for $\bar x=0$, we have
  $\para \p\psi (\bar x)=\R^n$.\hfill$\diamond$
\end{remark}

\subsection{Stronger regularity conditions} 

\noindent
We now introduce two additional regularity conditions, both of
which (will be seen to) imply \cref{ass:weak}, and hence also guarantee
well-posedness of the \srlasso{}. As we will see in
\cref{sec:lipschitz-stability-solution-function} and
\cref{sec:continuous-differentiability-solution-function}, respectively, these
conditions in fact yield stability of the solution function.

\subsubsection{Intermediate condition}

We start with what we call the \emph{intermediate} condition. This condition is
based on the notion of the \emph{(\srlasso{}) equicorrelation set}, which is an
analog to the one in the \lasso{} setting~\cite{tibshirani2013lasso}.
\begin{assumption}[Intermediate]
  \label{ass:intermediate}
  For a minimizer $\bar x$ of~\cref{eq:SR-LASSO}, we have $A\bar x\neq b$ and
  for
  \[
    J := \set{i \in [n]}{\left| A_{i}^T\frac{b-A\bar x}{\|A\bar x-b\|}\right| = \lambda },
  \] 
  we have $\ker A_J=\{0\}$ and $b\notin \rge A_J$.
\end{assumption}
Observe that if $\bar x$ is minimizer of~\cref{eq:SR-LASSO} with
$A\bar x\neq b$, then we have $I:=\supp(\bar x)\subseteq J$ by first-order
optimality conditions; a fact that is frequently used from here on.

We will now show that \cref{ass:intermediate} implies \cref{ass:weak} as
advertised. To this end, we employ the following lemma, whose proof is deferred
to~\Cref{app:Shrinking}.

\begin{lemma}[Shrinking property]
  \label{lem:shrinking-property}
  Let
  $B \in \reals^{m \times \ell}, C \in \reals^{m \times s}, \bar{y} \in
  \reals^{m}$ and $\varepsilon > 0$ such that
  $\rank[B\;  C] = \ell + s$ and
  $\bar{y} \notin \rge[B\;  C]$. Set $\mathcal{T} := \{\bar{y}\}^{\perp} \cap \ker C^{\top}$ and 
  \begin{align*}
    p^{*} %
    := \inf_{z \in \varepsilon \mathbb{B} \cap \mathcal{T}}\|B^{\top} (\bar{y} + z) \|_{\infty}. %
  \end{align*}
  Then, $p^{*} < \| B^{\top}\bar{y} \|_{\infty}$, and the infimum is attained.
\end{lemma}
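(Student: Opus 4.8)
The plan is to regard $f(z) := \|B^\top(\bar y + z)\|_\infty$ as a finite convex function on $\R^m$ and to prove the two assertions separately. Attainment is immediate: $\varepsilon\bB\cap\mathcal{T}$ is compact (a closed ball intersected with a subspace) and $f$ is continuous, so Weierstrass applies. For the strict inequality I would first reduce it to showing that $0$ is \emph{not} a minimizer of $f$ over the subspace $\mathcal{T}$. Indeed, if $f(z_0) < f(0)$ for some $z_0\in\mathcal{T}$, then convexity gives $f(tz_0)\le (1-t)f(0)+tf(z_0) < f(0)$ for all $t\in(0,1]$, and choosing $t$ small enough that $\|tz_0\|\le\varepsilon$ places $tz_0\in\varepsilon\bB\cap\mathcal{T}$; hence $p^* \le f(tz_0) < f(0) = \|B^\top\bar y\|_\infty$. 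This is precisely where $\varepsilon>0$ enters, as it makes the ball constraint inactive near the origin.

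To refute optimality of $0$ for $\min_{z\in\mathcal{T}}f(z)$, I would invoke the subdifferential optimality condition for $f+\delta_{\mathcal{T}}$. Since $f$ is finite-valued, $0$ is optimal iff $0\in\partial f(0)+N_{\mathcal{T}}(0)$, and as $\mathcal{T}$ is a subspace, $N_{\mathcal{T}}(0)=\mathcal{T}^\perp=\lin\{\bar y\}+\rge C=\rge[\bar y\;C]$. Writing $f=\|\cdot\|_\infty\circ L$ with the affine map $L(z)=B^\top\bar y+B^\top z$, the (affine) chain rule yields $\partial f(0)=B\,\partial\|\cdot\|_\infty(w)$ with $w:=B^\top\bar y$; and since $\|\cdot\|_\infty=\sig_{\bB_1}$, one has $\partial\|\cdot\|_\infty(w)=\argmax_{\nu\in\bB_1}\ip{\nu}{w}$. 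Optimality of $0$ would therefore furnish $\nu\in\partial\|\cdot\|_\infty(w)$, $\alpha\in\R$ and $\gamma\in\R^s$ with $B\nu=\alpha\bar y+C\gamma$.

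The heart of the argument — and the step I expect to carry the real weight — is contradicting this identity via the two structural hypotheses. Assuming $w\neq 0$, every subgradient of $\|\cdot\|_\infty$ at $w$ satisfies $\|\nu\|_1=1$, so $\nu\neq 0$; rewriting the identity as $B\nu - C\gamma - \alpha\bar y = 0$ I would split into two cases. If $\alpha\neq 0$, then $\bar y\in\rge[B\;C]$, contradicting $\bar y\notin\rge[B\;C]$. If $\alpha=0$, then $B\nu-C\gamma=0$ is a \emph{nontrivial} linear relation among the columns of $[B\;C]$ (nontrivial because $\nu\neq 0$), contradicting $\rank[B\;C]=\ell+s$, i.e.\ full column independence. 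In either case $0$ is not optimal, and the reduction above yields $p^*<\|B^\top\bar y\|_\infty$.

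Two caveats deserve mention. First, the statement implicitly requires $w=B^\top\bar y\neq 0$: if $w=0$ then $\|B^\top\bar y\|_\infty=0$ forces $p^*=0$ and the strict inequality cannot hold. This is harmless for the paper, since in the intended application $B$ collects the equicorrelation columns, on which $|A_i^\top\bar y|=\lambda>0$, whence $\|B^\top\bar y\|_\infty=\lambda>0$. Second, an equivalent subdifferential-free route constructs an explicit descent direction $d\in\mathcal{T}$ with $\sgn(w_i)(B^\top d)_i<0$ on the active set $\argmax_i|w_i|$ by a Motzkin/Gordan theorem of the alternative; the very same two-case dichotomy supplies the separating certificate, after which a first-order estimate shows $f(td)<f(0)$ for small $t>0$.
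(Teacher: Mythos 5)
Your proof is correct, but it follows a genuinely different route from the paper's. The paper proves the strict inequality through Fenchel--Rockafellar duality: it computes the conjugates of $\delta_{\varepsilon\bB\cap\mathcal{T}}$ (an infimal convolution yielding $\varepsilon\,\mathrm{d}_{\mathcal{T}^\perp}$) and of $\|B^\top\bar y+(\cdot)\|_\infty$, forms the dual problem $\max_{u\in\bB_1}\{-\ip{B^\top\bar y}{u}-\varepsilon\,\mathrm{d}_{\mathcal{T}^\perp}(Bu)\}$, and shows every dual feasible point has value strictly below $\|B^\top\bar y\|_\infty$ because $\mathrm{d}_{\mathcal{T}^\perp}(B\hat u)=0$ would force $\bar y\in\rge[B\;C]$; strong duality then transfers the strict bound to $p^*$. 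You instead argue entirely on the primal side: convexity lets you scale any descent point in $\mathcal{T}$ into the ball (so $\varepsilon$ is irrelevant, which your argument makes transparent while the paper's dual objective carries $\varepsilon$ along), and Fermat's rule $0\in\partial f(0)+N_{\mathcal{T}}(0)$ with the affine chain rule reduces everything to refuting $B\nu=\alpha\bar y+C\gamma$ with $\|\nu\|_1=1$. Notably, your two-case dichotomy ($\alpha\neq 0$ contradicts $\bar y\notin\rge[B\;C]$; $\alpha=0$ contradicts $\rank[B\;C]=\ell+s$) is \emph{more} careful than the corresponding step in the paper, whose displayed implication $B\hat u\in\reals\cdot\{\bar y\}+\rge C\implies\bar y\in\rge[B\;C]$ silently skips the case where the $\bar y$-coefficient vanishes --- exactly the case where the rank hypothesis and $B^\top\bar y\neq 0$ must be invoked. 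Your first caveat is also a genuine and correct observation: the lemma as stated fails when $B^\top\bar y=0$ (e.g.\ $m=3$, $B=e_1$, $C=e_3$, $\bar y=e_2$ satisfies both hypotheses yet $p^*=0=\|B^\top\bar y\|_\infty$), and neither proof can avoid this; it is harmless in the paper's application, where $\|B^\top\bar y\|_\infty=\lambda>0$ on the equicorrelation set, but strictly speaking the lemma should carry the hypothesis $B^\top\bar y\neq 0$. What the paper's duality route buys is a uniform treatment of attainment and the strict bound within one framework; what your route buys is elementarity (no conjugacy computations, no appeal to strong duality) and a sharper localization of where each hypothesis is used.
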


\noindent
We are now in a position to present the advertised implication.

\begin{proposition}
  \label{prop:intermed-implies-weak}
  \cref{ass:intermediate} implies \cref{ass:weak}.
\end{proposition}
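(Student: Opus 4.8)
The plan is to verify the two parts of \cref{ass:weak} separately, the engine for both being the inclusion $I := \supp(\bar x) \subseteq J$ (valid by the first-order optimality conditions, as noted right after \cref{ass:intermediate}) together with the normalized residual $\bar{y} := (b - A\bar x)/\|A\bar x - b\|$, which is well defined and equals the unique dual solution by \cref{cor:Invar} since $A\bar x \neq b$ under \cref{ass:intermediate}.

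Part (i) I would dispatch immediately from $I \subseteq J$. The columns of $A_I$ form a subfamily of those of $A_J$, so the linear independence encoded by $\ker A_J = \{0\}$ forces $\ker A_I = \{0\}$; and $\rge A_I \subseteq \rge A_J$ together with $b \notin \rge A_J$ gives $b \notin \rge A_I$.

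For part (ii) I would first pin down the correlations $A_i^\top \bar{y}$. By definition of $J$ we have $|A_i^\top \bar{y}| = \lambda$ for $i \in J$, in particular on $J \setminus I$. Dual feasibility of $\bar y$ (equivalently the optimality relation $A^\top \bar{y} \in \lambda\,\partial\|\cdot\|_1(\bar x)$ from \cref{prop:FR} and \cref{lem:SD1Norm}) gives $|A_i^\top \bar{y}| \le \lambda$ for every $i$, whence $|A_i^\top \bar{y}| < \lambda$ for $i \in J^C \subseteq I^C$. The goal is then to produce a single perturbation $z \in \{\bar{y}\}^\perp \cap \ker A_I^\top$ that strictly drops the equicorrelation columns $J \setminus I$ below $\lambda$ while leaving the already-strict columns $J^C$ below $\lambda$. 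I would obtain the first effect from \cref{lem:shrinking-property}, applied with $B := A_{J \setminus I}$, $C := A_I$, the above $\bar y$, and a small radius $\varepsilon > 0$. Its hypotheses hold: $[B\; C]$ is a column permutation of $A_J$, so $\rank[B\; C] = |J| = |J \setminus I| + |I|$; and $\bar{y} \notin \rge A_J$, for otherwise $b = (b - A\bar x) + A\bar x \in \rge A_J$ (using $A\bar x \in \rge A_I \subseteq \rge A_J$), contradicting \cref{ass:intermediate}. The lemma then delivers, for that $\varepsilon$, a point $z \in \varepsilon\bB \cap \{\bar y\}^\perp \cap \ker A_I^\top$ with $\|A_{J\setminus I}^\top(\bar{y} + z)\|_\infty < \|A_{J\setminus I}^\top \bar{y}\|_\infty = \lambda$.

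The last move is to fix $\varepsilon$ small enough at the outset to also control $J^C$: taking $\varepsilon < (\lambda - \max_{i \in J^C}|A_i^\top \bar{y}|)/(1 + \max_i \|A_i\|)$, which is strictly positive by the strict inequality above, forces $|A_i^\top(\bar{y} + z)| \le |A_i^\top \bar{y}| + \|A_i\|\,\varepsilon < \lambda$ for each $i \in J^C$. Since $I^C = (J \setminus I) \cup J^C$, combining the two estimates yields $\|A_{I^C}^\top(\bar{y} + z)\|_\infty < \lambda$ with $z \in \{\bar{y}\}^\perp \cap \ker A_I^\top$, which is exactly \cref{ass:weak}(ii); the degenerate case $J = I$ is immediate with $z = 0$, since then $I^C = J^C$. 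I expect the only genuinely nontrivial step to be the strict decrease on $J \setminus I$, where every correlation sits exactly at the threshold $\lambda$: this is precisely what \cref{lem:shrinking-property} is designed to supply, and verifying $\bar y \notin \rge A_J$ is the small linear-algebra check that unlocks it.
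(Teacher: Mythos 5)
Your proposal is correct and takes essentially the same route as the paper's proof: part (i) follows from $I \subseteq J$, and part (ii) is obtained by applying \cref{lem:shrinking-property} with $B := A_{J\setminus I}$, $C := A_I$ after fixing $\varepsilon>0$ small enough that the $J^C$ columns stay strictly below $\lambda$. Your explicit quantitative choice of $\varepsilon$, the check that $\bar y \notin \rge A_J$, and the treatment of the degenerate case $J=I$ merely spell out details the paper handles implicitly (it picks $\varepsilon$ by a continuity argument and leaves the range check unstated).
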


\begin{proof}
  Let $\bar{x}$ solve~\cref{eq:SR-LASSO}, set $I := \supp(\bar{x})$ and
  $s := |I|$. Note that $b \notin \rge A_{J}$, thus, in particular,
  $b \notin \rge A_{I}$ and $A\bar x\neq b$, so
  $\bar y:=\frac{b-A\bar x}{\|A\bar x-b\|}$ is the unique dual solution
  (\emph{cf.}~\cref{cor:Invar}).  Thus, to establish the result, we show that
  \begin{align*}
    \exists \;   z \in \{\bar{y}\}^{\perp}\cap \ker A_I^T:\;\| A_{I^{C}}^{\top} (\bar{y} + z) \|_{\infty} < \lambda.  
  \end{align*}
  Now, set $\mathcal{T} := \{\bar{y}\}^{\perp} \cap \ker A_{I}^{\top}$. Since
  one already has $\| A_{J^{C}}^{\top}\bar{y} \|_{\infty} < \lambda$ by
  definition of $J$, choose any $\varepsilon > 0$ satisfying
  \begin{align*}
    \sup_{z \in \varepsilon \mathbb{B}\cap \mathcal{T} }\| A_{J^{C}}^{\top} (\bar{y} + z) \|_{\infty} %
    < \lambda,
  \end{align*}
  and seek $z \in \varepsilon \mathbb{B} \cap \mathcal{T}$ satisfying
  $    \| A_{J\setminus I}^{\top} (\bar{y} + z) \|_{\infty} < \lambda$.
  We select such a $z$ via~\cref{lem:shrinking-property} with
  $B := A_{J\setminus I}, C := A_{I}$ and $\ell := |J\setminus I| = |J| -
  s$. Thus, one has
  $\rank [A_{J\setminus I}\; A_{I}] = \rank A_{J} = |J|$, and
  $\bar{y} \notin \rge A_{J} = \rge [A_{J\setminus I}\; A_{I}]$. Therefore, the lemma yields
  $\bar{z} \in \varepsilon \mathbb{B} \cap \mathcal{T}$ satisfying
  $\| A_{J\setminus I}^{\top} (\bar{y} + \bar{z}) \|_{\infty} < \| A_{J\setminus I}^{\top} \bar{y} \|_{\infty} = \lambda$.
  Consequently, there exists $\bar{z} \in\cT$ satisfying
  $\| A_{I^{C}}^{\top}(\bar{y} + \bar{z}) \|_{\infty}<\lambda$, completing the proof.
\end{proof}

\noindent
An immediate consequence of the above result
and~\Cref{th:sufficient-condition-for-uniqueness} is that the intermediate
condition from~\cref{ass:intermediate} yields uniqueness of the solution in
question. This is complemented by the following result, the proof of which is
postponed to~\cref{app:Explicit}. Under~\cref{ass:intermediate}, it establishes
uniqueness and gives an analytic expression for the unique solution, analogous
to a result for unconstrained LASSO~\cite[Lemma~2]{tibshirani2013lasso}.

\begin{proposition}[Analytic solution formula]
  \label{prop:Explicit}
  Let $\bar x$ be a solution of~\cref{eq:SR-LASSO} such
  that~\cref{ass:intermediate} holds at $\bar x$. Then $\bar x$ is the unique
  solution and
  \begin{align}
    \label{eq:explicit-solution--intermediate}
    \bar{x} = L_{J} \left( B A_{J}^{\top} (\identity - \bar{y} \bar{y}^{\top}) b \right), %
    \qquad%
    B := \left[ A_{J}^{\top}(\identity - \bar{y}\bar{y}^{\top}) A_{J} \right]^{-1}.
  \end{align}
\end{proposition}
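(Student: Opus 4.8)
The plan is to handle uniqueness first and then obtain the closed form from a short computation driven by~\cref{cor:Invar}. Uniqueness is immediate: by \cref{prop:intermed-implies-weak}, \cref{ass:intermediate} implies \cref{ass:weak}, so \cref{th:sufficient-condition-for-uniqueness} gives that $\bar{x}$ is the unique solution of~\cref{eq:SR-LASSO}. For the formula, set $\bar{y} := (b - A\bar{x})/\|A\bar{x} - b\|$; since $A\bar{x} \neq b$ and $b \notin \rge A_J$ (hence also $b \notin \rge A_I$), \cref{cor:Invar} tells us that $\bar{y}$ is the unique dual solution and $\|\bar{y}\| = 1$. Throughout I would write $P := \identity - \bar{y}\bar{y}^{\top}$ for the orthogonal projection onto $\{\bar{y}\}^{\perp}$, so that $B = (A_J^{\top} P A_J)^{-1}$ and $P\bar{y} = \bar{y} - \bar{y}\|\bar{y}\|^2 = 0$.

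Next I would verify that $B$ is well defined. First-order optimality yields $I := \supp(\bar{x}) \subseteq J$, so $\bar{x}$ is supported on $J$ and $A\bar{x} = A_J \bar{x}_J$, in particular $A\bar{x} \in \rge A_J$. If $\bar{y} \in \rge A_J$ held, then $b = A\bar{x} + \|A\bar{x} - b\|\,\bar{y} \in \rge A_J$, contradicting \cref{ass:intermediate}; hence $\bar{y} \notin \rge A_J$. Combined with $\ker A_J = \{0\}$, \cref{cor:Equiv} (applied with $M = A_J$ and $v = \bar{y}$) shows that $A_J^{\top} P A_J$ is invertible -- equivalently, one may invoke \cref{lem:SM}(a) -- so $B$ exists.

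The crux is the observation that the residual is parallel to $\bar{y}$: by \cref{cor:Invar}(b), $A\bar{x} - b = -\|A\bar{x} - b\|\,\bar{y} \in \lin\{\bar{y}\}$, whence $P(A\bar{x} - b) = 0$ because $P\bar{y} = 0$. Since $A\bar{x} = A_J \bar{x}_J$, this gives $A_J^{\top} P A_J \bar{x}_J = A_J^{\top} P (A\bar{x} - b) + A_J^{\top} P b = A_J^{\top} P b$, and inverting $A_J^{\top} P A_J$ yields $\bar{x}_J = B A_J^{\top} P b$. Embedding by zeros on $J^{C}$ (legitimate because $\supp(\bar{x}) \subseteq J$) produces exactly~\cref{eq:explicit-solution--intermediate}. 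I expect the only step needing genuine care to be the well-definedness of $B$, i.e.\ establishing $\bar{y} \notin \rge A_J$ so that \cref{cor:Equiv} applies; once the invertibility is secured, everything else follows directly from the normalized-residual invariance in \cref{cor:Invar}.
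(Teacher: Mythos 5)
Your proof is correct, but it takes a genuinely different route from the paper's. The paper works through the duality/subgradient structure: it takes the unique subgradient $v \in \partial\|\cdot\|_1(\bar x)$ with $A^{\top}\bar y = \lambda v$, rewrites strong duality as $\|A\bar x - b\| = \ip{b}{\bar y} - \lambda\ip{v_J}{\bar x_J}$, substitutes this into the $J$-restricted optimality conditions $A_J^{\top}(b - A_J\bar x_J) = \lambda\|A\bar x - b\|\,v_J$ to obtain $\left(A_J^{\top}A_J - \lambda^2 v_Jv_J^{\top}\right)\bar x_J = A_J^{\top}b - \lambda\ip{b}{\bar y}v_J$, and only then recognizes $A_J^{\top}A_J - \lambda^2 v_Jv_J^{\top} = A_J^{\top}(\identity - \bar y\bar y^{\top})A_J$ using $A_J^{\top}\bar y = \lambda v_J$. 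You bypass all of this: since \cref{cor:Invar}(b) makes the residual parallel to $\bar y$, the projection $P := \identity - \bar y\bar y^{\top}$ annihilates it, so $PA_J\bar x_J = Pb$ and the formula drops out of the normal equations after one application of \cref{lem:SM}. Your route is shorter and more transparent, isolating the single geometric fact (residual invariance) that drives the formula. What the paper's longer computation buys is a self-contained uniqueness argument: because $\bar y$, $v$, and $J$ are determined by the problem data rather than by the particular solution, its derivation shows directly that $\bar x_J$ (hence $\bar x$) is unique without appealing to \cref{th:sufficient-condition-for-uniqueness}; you instead outsource uniqueness to \cref{prop:intermed-implies-weak} combined with \cref{th:sufficient-condition-for-uniqueness}, which is exactly what the paper's surrounding text suggests and involves no circularity. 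Both arguments share the same invertibility step ($\ker A_J = \{0\}$ and $\bar y \notin \rge A_J$ via \cref{lem:SM}), and you in fact spell out the deduction of $\bar y \notin \rge A_J$ from $b \notin \rge A_J$ more carefully than the paper does.
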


\subsubsection{The strong condition}

We present a third regularity condition to which we will refer as the `strong'
condition as it implies the intermediate condition from \cref{ass:intermediate}
(and thus the weak one) as we will see shortly.

\begin{assumption}[Strong]
  \label{ass:Strong}
  For a minimizer $\bar x$ of~\cref{eq:SR-LASSO} with
  $I := \supp(\bar x)$ we have:
  \begin{itemize}
\item[(i)] $\ker A_{I} = \{ 0\}$ and $b\notin \rge A_I$; % $A_I$ has full column rank $|I|$ ;
\item[(ii)] $\|A_{I^C}^{\top} (b - A\bar{x})\|_{\infty} < \lambda\|A\bar{x} - b\|$.
\end{itemize}

\end{assumption}

\begin{remark}[On \cref{ass:Strong}]  Note that part (ii) is automatically satisfied if $\|A_{I^C}^\top\|_{2\to \infty}< \lambda$, where $\|\cdot\|_{2\to\infty}$ denotes the induced matrix norm defined by $\|M\|_{2\to \infty} := \sup_{\|z\| =1} \|Mz\|_\infty$. In particular, (ii) is implied by $\|A_{I^C}^\top\| < \lambda$ since $\|M\|_{2\to\infty} \leq \|M\|$ for any matrix $M$. \hfill$\diamond$
\end{remark}

\noindent
We now address the advertised (and trivial) implication.

\begin{proposition}\label{prop:StrongInter} \cref{ass:Strong} implies \cref{ass:intermediate}.
\end{proposition}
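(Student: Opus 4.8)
The plan is to show that \cref{ass:Strong} forces the equicorrelation index set $J$ to be contained in the support $I := \supp(\bar x)$; once this inclusion is in hand, both requirements of \cref{ass:intermediate} follow for free by passing from the columns indexed by $I$ to the smaller subcollection indexed by $J$.

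First I would verify the standing requirement $A\bar x \neq b$ of \cref{ass:intermediate}. Since $\bar x$ is supported on $I$, we have $A\bar x = A_I \bar x_I \in \rge A_I$, whereas \cref{ass:Strong}(i) gives $b \notin \rge A_I$; hence $A\bar x \neq b$, and the normalized residual $\bar y := \frac{b - A\bar x}{\|A\bar x - b\|}$ is well defined (in fact it is the unique dual solution by \cref{cor:Invar}). Dividing the strict inequality in \cref{ass:Strong}(ii) by $\|A\bar x - b\| = \|b - A\bar x\| > 0$ then yields $\|A_{I^C}^\top \bar y\|_\infty < \lambda$, that is, $|A_i^\top \bar y| < \lambda$ for every $i \in I^C$.

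Next I would establish the inclusion $J \subseteq I$. By definition $J = \set{i \in [n]}{|A_i^\top \bar y| = \lambda}$, so any index $i \in I^C$ satisfies $|A_i^\top \bar y| < \lambda$ and is therefore excluded from $J$; contrapositively, $i \in J$ forces $i \in I$. With $J \subseteq I$ the two conclusions are immediate: the columns of $A_J$ form a subcollection of the (by assumption) linearly independent columns of $A_I$, whence $\ker A_J = \{0\}$; and $\rge A_J \subseteq \rge A_I$, so $b \notin \rge A_I$ gives $b \notin \rge A_J$. This is precisely \cref{ass:intermediate}.

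The argument is elementary and presents no real obstacle; the only point demanding care is that the inequality in \cref{ass:Strong}(ii) be \emph{strict}, since this is exactly what excludes the boundary case $|A_i^\top \bar y| = \lambda$ for $i \in I^C$ and thereby delivers the clean inclusion $J \subseteq I$.
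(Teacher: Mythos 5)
Your proof is correct and follows essentially the same route as the paper: normalizing the strict inequality in \cref{ass:Strong}(ii) by $\|A\bar x - b\|$ excludes every index of $I^C$ from the equicorrelation set $J$, and \cref{ass:Strong}(i) then supplies the rank and range conditions for $A_J$. The only cosmetic difference is that the paper asserts the equality $I = J$ (implicitly using the standard fact $I \subseteq J$ from first-order optimality), whereas you establish and need only the inclusion $J \subseteq I$; your write-up is also more explicit about the points the paper's one-line proof leaves implicit, namely $A\bar x \neq b$ and $b \notin \rge A_J$.
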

\begin{proof} If \cref{ass:Strong} holds at $\bar x$, then part (ii) yields that $I=J$, hence part (ii) implies that $A_J=A_I$ has full rank. \end{proof}

\subsubsection{Overview of regularity conditions}

From \cref{prop:intermed-implies-weak} and \cref{prop:StrongInter} it follows that: 
\vspace{0.3cm}
\begin{center}
\cref{ass:Strong} \quad $\Longrightarrow$ \quad \cref{ass:intermediate}\quad $\Longrightarrow$ \quad \cref{ass:weak}.
\end{center}
\vspace{0.3cm}

\noindent
The reverse implications do not generally hold as the following examples show.

\begin{example}
  Consider the \srlasso{} \cref{eq:SR-LASSO} with
  \begin{align*}
    A=\begin{pmatrix}
        1 & 0 &0\\0& 1 &1
      \end{pmatrix},\AND b=\binom{1}{2}.
  \end{align*}
  For $\lambda=\frac{2}{\sqrt{5}}$, we find that $\bar x=0$ is a solution
  (with $I=\emptyset$) as
  \begin{align*}
    A^T\frac{b-A\bar x}{\|A\bar x-b\|}=A^T\frac{b}{\|b\|}%
    =\begin{pmatrix}
       \lambda/2\\ \lambda \\ \lambda
     \end{pmatrix}
    \in \lambda \p \|\cdot\|_1(\bar x).
  \end{align*}
  We also see that $J=\{2,3\}$, so
  $A_J=\begin{pmatrix} 0 & 0 \\ 1 & 1 \end{pmatrix}$. Consequently,
  \cref{ass:intermediate} is violated at $\bar x$. In turn, let
  $\bar z:=\frac{\lambda}{6} \binom{\hphantom{-}2}{-1}\in \{b\}^\perp$. Then,
  with $A_{I^C}=A$, we find
  \begin{align*}
    \left\|A_{I^C}^T\left(\frac{b}{\|b\|}+\bar z\right)\right\|_\infty%
    = \left\|\begin{pmatrix}
               \lambda/2\\ \lambda \\ \lambda
             \end{pmatrix} %
    + \begin{pmatrix}
        \lambda/3\\- \lambda/6 \\ -\lambda/6
      \end{pmatrix}\right\|_\infty<\lambda.
  \end{align*}
  Therefore, in view of~\cref{eq:weak0}, \cref{ass:weak} holds at $\bar x$. \hfill $\diamond$
\end{example}
    
\begin{example} 
  Consider the \srlasso{}~\cref{eq:SR-LASSO} with
  \begin{align*}
    A
    :=\begin{pmatrix}
        1 & 0 & 2\\
        0 & 2 & -2 
      \end{pmatrix}\AND %
    b:=\begin{pmatrix}1\\ 1 \end{pmatrix}.
  \end{align*}
  For $\lambda=\sqrt{2}$, we find that $\bar x=0$ is a solution (with
  $I=\emptyset$) as
  \begin{align*}
    A^T\frac{b-A\bar x}{\|A\bar x-b\|}=A^T\frac{b}{\|b\|}%
    =\begin{pmatrix}
       \frac{1}{\lambda}\\ \lambda\\0
     \end{pmatrix}%
    \in \lambda \p\|\cdot\|_1(\bar x).
  \end{align*}
  In particular, $J=\{2\}$, thus $A_J=\binom{0}{2}$, hence
  \cref{ass:intermediate} is satisfied.  In turn, \cref{ass:Strong} is
  violated as $I \subsetneq J$.\hfill $\diamond$
\end{example}

\section{Lipschitz stability under the intermediate condition}
\label{sec:lipschitz-stability-solution-function}

In this section, we show that the intermediate condition~\cref{ass:intermediate}
yields directional differentiability and local Lipschitz continuity of the
solution function of the SR-LASSO~\cref{eq:SR-LASSO} at the point in question,
and we provide explicit Lipschitz bounds.  The key result is that,
under~\cref{ass:intermediate}, the subdifferential map of the objective function
of the \srlasso{} is \emph{strongly metrically
  regular}~\cite{dontchev2014implicit, rockafellar1998variational} at the point
in question, \ie{} it is invertible with locally Lipschitz inverse there. To
this end, given a positively homogenous map $H:\R^n\rightrightarrows \R^m$, its
\emph{outer norm} given by
\begin{align*}
  |H|^+:=\sup_{\|x\|\leq 1}\sup_{y\in H(x)}\|y\|.
\end{align*}

\begin{proposition}
  \label{prop:strong-metric-regularity--intermediate}
  Let $A \in \reals^{m\times n}$, $b \in \reals^{m}$ and let
  $\bar{x} \in \reals^{n}$ be a solution of \cref{eq:SR-LASSO} such that
  $b \neq A\bar{x}$. Define $T : \reals^{n} \rightrightarrows \reals^{n}$ by
  $
    T(x) %
    := \frac{1}{\lambda} \partial \left( \| A (\cdot) - b \| \right)(x) %
    + \partial \| \cdot \|_{1}(x)$.
  Under \cref{ass:intermediate},
  $\left| D^{*} T(\bar{x} \mid 0)^{-1} \right|^{+} < \infty$. Moreover, $T$ is
  strongly metrically regular at $(\bar{x}, 0)$.
\end{proposition}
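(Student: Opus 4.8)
The plan is to reduce the finiteness claim to the Mordukhovich coderivative criterion, compute the coderivative explicitly through the sum rule, and only then upgrade metric regularity to strong metric regularity via maximal monotonicity.

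First I would record the structural facts. By first-order optimality for~\cref{eq:SR-LASSO} we have $0 \in T(\bar x)$, and $T = \partial h$ with $h := \tfrac1\lambda \| A(\cdot) - b\| + \|\cdot\|_1$ proper, lower semicontinuous and convex, so $T$ is maximal monotone with closed graph. Since the outer norm of a positively homogeneous mapping is finite exactly when it sends $0$ to $\{0\}$, the bound $|D^{*}T(\bar x \mid 0)^{-1}|^{+} < \infty$ is equivalent to the implication $0 \in D^{*}T(\bar x \mid 0)(y) \Rightarrow y = 0$; by the coderivative criterion for the Aubin property applied to $T^{-1}$ (using $D^{*}(T^{-1}) = (D^{*}T)^{-1}$), this is in turn equivalent to metric regularity of $T$ at $(\bar x, 0)$. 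Hence the heart of the argument is this single implication.

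Next I would compute $D^{*}T(\bar x \mid 0)$. Because $A\bar x \neq b$, the map $g := \|A(\cdot) - b\|$ is $C^{2}$ near $\bar x$ with $\nabla g(\bar x) = -A^{\top}\bar y$ and $\nabla^{2} g(\bar x) = \tfrac{1}{\|A\bar x - b\|} A^{\top}(\identity - \bar y \bar y^{\top}) A$, where $\bar y := \tfrac{b - A\bar x}{\|A\bar x - b\|}$ is the normalized residual (\cref{cor:Invar}). The sum rule \cref{lem:Sum}(c), with $C^{1}$ summand $\tfrac1\lambda \nabla g$ and set-valued summand $\partial \|\cdot\|_1$, gives $D^{*}T(\bar x \mid 0)(y) = \tfrac1\lambda \nabla^{2} g(\bar x) y + D^{*}(\partial\|\cdot\|_1)(\bar x \mid \bar w)(y)$, where $\bar w := \tfrac1\lambda A^{\top}\bar y \in \partial\|\cdot\|_1(\bar x)$. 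Writing $Q := \tfrac1\lambda \nabla^{2} g(\bar x)$, the target implication becomes $-Qy \in D^{*}(\partial\|\cdot\|_1)(\bar x \mid \bar w)(y) \Rightarrow y = 0$.

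I would then exploit separability: since $\partial\|\cdot\|_1 = \bigtimes_{i=1}^{n} \partial|\cdot|$, its graph is a product and the coderivative decouples coordinatewise. A direct computation of the limiting normal cone to $\gph \partial|\cdot|$ at the three relevant configurations yields that for $i \in I$ (so $\bar x_i \neq 0$, $\bar w_i = \pm 1$) the coderivative is $\{0\}$; for $i \notin J$ (so $\bar x_i = 0$, $|\bar w_i| < 1$) it is $\reals$ when $y_i = 0$ and empty otherwise, forcing $y_i = 0$; and at the ``corner'' coordinates $i \in J \setminus I$ (so $\bar x_i = 0$, $\bar w_i = \pm 1$) it enjoys the sign property that $v \in D^{*}(\partial|\cdot|)(0 \mid \pm 1)(d)$ implies $vd \geq 0$. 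Thus $y$ is supported on $J$; since $\ker A_J = \{0\}$ and $b \notin \rge A_J$ (equivalently $\bar y \notin \rge A_J$, as $A\bar x \in \rge A_J$), \cref{lem:SM} and \cref{cor:Equiv} make $W := A_J^{\top}(\identity - \bar y \bar y^{\top}) A_J$ symmetric positive definite, and $Q$ restricted to $J$ equals $\tfrac{1}{\lambda \|A\bar x - b\|} W$. The coordinatewise relations then read $(Wy_J)_i = 0$ for $i \in I$ and $(Wy_J)_i\, y_i \leq 0$ for $i \in J \setminus I$, whence $\langle y_J, W y_J \rangle \leq 0$; positive definiteness of $W$ forces $y_J = 0$, hence $y = 0$.

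This establishes the finiteness bound and therefore metric regularity of $T$ at $(\bar x, 0)$. To pass to strong metric regularity I would invoke the fact that for maximal monotone mappings, in particular subdifferentials of convex functions, metric regularity and strong metric regularity coincide~\cite{dontchev2014implicit}. I expect the main obstacle to be twofold: getting the corner coderivative of $\partial|\cdot|$ and its sign property exactly right so that the quadratic-form argument closes, and correctly invoking the monotone equivalence that upgrades metric to strong metric regularity. Everything else is routine bookkeeping with the sum rule and the Sherman--Morrison--Woodbury positivity.
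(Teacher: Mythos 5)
Your proof is correct and follows essentially the same route as the paper's: the sum rule of \cref{lem:Sum}(c) reduces $D^{*}T(\bar x\mid 0)$ to the Hessian of the smooth term plus $D^{*}(\partial\|\cdot\|_1)$, the coordinatewise sign analysis (forcing $y_{J^C}=0$, $(Wy_J)_I=0$, and the sign condition on $J\setminus I$) yields a quadratic-form inequality, positive definiteness of $A_J^{\top}(\identity-\bar y\bar y^{\top})A_J$ via \cref{lem:SM} closes the argument, and maximal monotonicity of the convex subdifferential upgrades metric to strong metric regularity exactly as in the paper. The only cosmetic deviations are that you reduce outer-norm finiteness to the kernel condition $D^{*}T(\bar x\mid 0)^{-1}(0)=\{0\}$ via positive homogeneity and closedness of the coderivative graph and then check only $z=0$, whereas the paper keeps general $z$ and argues by compactness, and that you recompute the corner coderivative of $\partial|\cdot|$ by separability instead of citing the earlier LASSO paper's lemma.
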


\begin{proof}
  Define $\bar{r} := b - A \bar{x}$ and $\bar{y} := \bar{r} / \| \bar{r}
  \|$. Let $G := \nabla \| A (\cdot) - b \|$ and observe that $G$ has the form
  $G = \phi(A, b, 1, \cdot)$ for $\phi$ as defined in~\cref{lem:phi}.  By
  \cref{lem:Sum}(c), \cref{lem:phi}(d) and the symmetry of $DG(\bar x)$, one has
  \begin{align*}
    D^{*}T (\bar{x} \mid 0)(y) %
    & = \frac{1}{\lambda} DG(\bar{x})y + D^{*}\left( \partial \| \cdot \|_{1} \right)\left( \bar{x} \mid  \frac{1}{\lambda} A^{\top} \bar{y} \right)(y)
    \\
    & = \frac{1}{\lambda \| \bar{r} \|} A^{\top} \left( \identity - \bar{y} \bar{y}^{\top}\right) A y + D^{*} \left( \partial \| \cdot \|_{1} \right) \left( \bar{x} \mid  \frac{1}{\lambda} A^{\top} \bar{y} \right)(y).
  \end{align*}
  Thus, we have
  \begin{align*}
    & y \in D^{*} T(\bar{x} \mid 0)^{-1}(z)
    \\
    \iff & z \in D^{*} T \left( \bar{x} \mid 0 \right)(y)
    \\
    \iff & z - \frac{1}{\lambda \| \bar{r} \|} A^{\top} \left( \identity - \bar{y} \bar{y}^{\top} \right) Ay \in D^{*} \left( \partial \| \cdot \|_{1} \right) \left( \bar{x} \mid  \frac{1}{\lambda } A^{\top} \bar{y} \right)(y)
    \\
    \iff & \left( z - \frac{1}{\lambda \| \bar{r} \|} A^{\top} \left( \identity - \bar{y} \bar{y}^{\top} \right) Ay , -y \right) \in N_{\gph \partial \| \cdot \|_{1}} \left( \bar{x} ,\frac{1}{\lambda } A^{\top} \bar{y} \right)
    \\
    \implies
    & %
      \begin{cases}
        \left( z_{i} - \frac{1}{\lambda \| \bar{r} \|} A_{i}^{\top} \left( \identity - \bar{y} \bar{y}^{\top} \right) Ay , -y_{i} \right) \in \reals \times \{0\}, & \forall i \in J^{C},\\
        \left( z_{i} - \frac{1}{\lambda \| \bar{r} \|} A_{i}^{\top} \left( \identity - \bar{y} \bar{y}^{\top} \right) Ay , -y_{i} \right) \in \{0\} \times \reals, & \forall i \in I,\\
        \left( z_{i} - \frac{1}{\lambda \| \bar{r} \|} A_{i}^{\top} \left( \identity - \bar{y} \bar{y}^{\top} \right) Ay , -y_{i} \right) \in \reals_{-}\times \reals_{+} \cup \reals_{+}\times \reals_{-}, & \forall i \in J\setminus I.
    \end{cases}
  \end{align*}
  The third equivalence holds by definition of the coderivative, and the final
  implication can be obtained, for instance,
  from~\cite[Lemma~4.9]{berk2023lasso}. The first two conditions, for $J^{C}$
  and $I$, respectively, yield
  $ y_{J^{C}} %
    =0$ and $
    \lambda \| \bar{r} \| z_{I} %
    = A_{I}^{\top} \left( \identity - \bar{y}\bar{y}^{\top} \right) A_{J} y_{J}$.
  Notice that the third condition (using $y_{J^{C}} \equiv 0$) implies
  \begin{align*}
    y_{i} \left( z_{i} - \frac{1}{\lambda \| \bar{r} \|} A_{i}^{\top} \left( \identity - \bar{y} \bar{y}^{\top} \right) A_{J}y_{J} \right) \geq 0 \qquad \forall i \in J\setminus I.
  \end{align*}
  Combining this observation with the first two conditions yields 
  \begin{equation}
    \label{eq:Relax}
    y_{J}^{\top} z_{J} - \frac{1}{\lambda \| \bar{r} \|} y_{J}^{\top} A_{J}^{\top} (\identity - \bar{y} \bar{y}^{\top})A_{J} y_{J}\geq 0 \quad \forall z \in D^{*} T \left( \bar{x} \mid 0 \right)(y).
  \end{equation}
  Therefore, we find that
  \begin{align*}
    \left| D^{*} T(\bar{x} \mid 0)^{-1} \right|^{+} %
    & = \sup_{(z, y) \in \reals^{n} \times \reals^{n}} \left\{ \| y \| \mid \| z \| \leq 1 , y \in D^{*} T(\bar{x} \mid 0)^{-1}(z)\right\}\\
    & \leq \sup_{(z,u)\in \reals^{n} \times \reals^{|J|}} \set{\|u\|}{ \| z \| \leq 1,\;u^{\top} z_{J}\geq  \frac{1}{\lambda \| \bar{r} \|} u^{\top} A_{J}^{\top} (\identity - \bar{y} \bar{y}^{\top})A_{J} u}\\
    & < +\infty.
  \end{align*}
  Here the finiteness is due to the fact that the second supremum is attained by
  compactness of the constrained set which, in turn, relies on the positive
  definiteness of $A_{J}^{\top} (\identity - \bar{y} \bar{y}^{\top})A_{J} $ due
  to \cref{ass:intermediate} and \cref{lem:SM}. Hence,
  by~\cite[Theorem~4C.2]{dontchev2014implicit} it follows that $T$ is metrically
  regular at $(\bar{x}, 0)$. In addition, a subdifferential of a closed, proper
  convex function, $T$ is globally (maximally) monotone, so
  by~\cite[Theorem~3G.5]{dontchev2014implicit}, it follows that $T$ is strongly
  metrically regular at $(\bar{x}, 0)$.
\end{proof}

We use the strong metric regularity result under \cref{ass:intermediate} to
bootstrap our way to directional differentiability and obtain a (local)
Lipschitz modulus for the solution map that depends on $J$. For this, we need
the following preparatory result.

\begin{lemma}
  \label{lem:local}
  Let $\bar x$ be the (unique) solution of~\cref{eq:SR-LASSO} such that (given
  $(A, b, \lambda)$) \cref{ass:intermediate} holds at $\bar x$. Suppose that
  $\bar x_k$ solves~\cref{eq:SR-LASSO} given
  $(A_k,b_k,\lambda_k)\to (A,b,\lambda)$ and assume $\bar x_k\to \bar x$. Then,
  \cref{ass:intermediate} holds at $\bar x_k$ (for $(A_k,b_k,\lambda_k)$) for all $k\in\bN$ sufficiently large. 
\end{lemma}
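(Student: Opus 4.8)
The plan is to transfer each ingredient of \cref{ass:intermediate} from the limit point $\bar x$ to $\bar x_k$ for all sufficiently large $k$, combining continuity of the normalized residual with the openness of the relevant rank conditions. Throughout, write $\bar r := b - A\bar x$, $\bar y := \bar r/\|\bar r\|$ and, for the perturbed data, $\bar r_k := b_k - A_k\bar x_k$, $\bar y_k := \bar r_k/\|\bar r_k\|$.

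First I would establish the residual statement. Since $A_k \to A$ and $\bar x_k \to \bar x$, we have $A_k\bar x_k \to A\bar x$, hence $\bar r_k \to \bar r$. As $A\bar x \neq b$ gives $\bar r \neq 0$, it follows that $A_k \bar x_k \neq b_k$ for $k$ large (the first part of \cref{ass:intermediate}), and that $\bar y_k \to \bar y$ by continuity of $r \mapsto r/\|r\|$ away from the origin. Recall also that the limiting point is governed by \cref{cor:Invar}, so $\bar y$ is precisely the normalized residual appearing in the definition of $J$.

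The key step is to show the equicorrelation set does not grow, i.e.\ that $J_k \subseteq J$ for $k$ large, where $J_k := \set{i \in [n]}{|(A_k)_i^{\top}\bar y_k| = \lambda_k}$. Fix $i \notin J$. Optimality of $\bar x$, via the dual feasibility $\|A^{\top}\bar y\|_\infty \le \lambda$ furnished by \cref{prop:FR}, forces the \emph{strict} inequality $|A_i^{\top}\bar y| < \lambda$. Since $(A_k)_i^{\top}\bar y_k \to A_i^{\top}\bar y$ and $\lambda_k \to \lambda$, we obtain $|(A_k)_i^{\top}\bar y_k| < \lambda_k$ for $k$ large, so $i \notin J_k$. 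Because $[n]$ is finite, a single $k$ works simultaneously for all the finitely many indices $i \notin J$, which yields $J_k \subseteq J$ for $k$ large.

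It remains to verify $\ker (A_k)_{J_k} = \{0\}$ and $b_k \notin \rge (A_k)_{J_k}$. I would reduce these to the corresponding conditions on $(A_k)_J$. The pair ``$\ker A_J = \{0\}$ and $b \notin \rge A_J$'' is equivalent to the augmented matrix $[A_J\ b]$ having full column rank $|J|+1$ (the columns of $A_J$ are independent and $b$ lies outside their span; this is the content of \cref{cor:Equiv} in augmented form). Full column rank is an open condition and $[(A_k)_J\ b_k] \to [A_J\ b]$, so $[(A_k)_J\ b_k]$ has full column rank for $k$ large, giving both $\ker (A_k)_J = \{0\}$ and $b_k \notin \rge (A_k)_J$. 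Since $J_k \subseteq J$, the columns of $(A_k)_{J_k}$ form a subset of those of $(A_k)_J$: full column rank passes to column-subsets, so $\ker (A_k)_{J_k} = \{0\}$; and $\rge (A_k)_{J_k} \subseteq \rge (A_k)_J$ gives $b_k \notin \rge (A_k)_{J_k}$. This completes the verification of \cref{ass:intermediate} at $\bar x_k$.

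I expect the main obstacle to be the stability of the equicorrelation set, $J_k \subseteq J$. The inclusion may well be strict, since equicorrelated indices can drop out under perturbation, but this is harmless: the rank conditions only improve when passing to fewer columns. The essential content is that \emph{strictly} inactive indices ($i \notin J$) remain inactive, which is exactly where the strict inequality $|A_i^{\top}\bar y| < \lambda$ — guaranteed by optimality at the limit — is used.
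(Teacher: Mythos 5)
Your proof is correct and takes essentially the same route as the paper's: continuity of the residual gives $A_k\bar x_k\neq b_k$, strict inequalities at indices $i\notin J$ give $J_k\subseteq J$, and openness of the full-rank condition transfers the rank hypothesis to the perturbed data. In fact you are more thorough than the paper, which only states ``$A_{J_k}$ has full rank'' and glosses over the condition $b_k\notin\rge (A_k)_{J_k}$ — your augmented-matrix argument with $[A_J\ b]$ supplies exactly the missing detail.
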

\begin{proof}
  First, note that $A\bar x\neq b$. As $(A_k,b_k)\to (A,b)$ and
  $\bar x_k\to \bar x$, by continuity we find that $A_k\bar x_k\neq b_k$ for all
  $k$ sufficiently large. In particular, the equicorrelation set $J_k$
  associated to $\bar x_k$ and $(A_k,b_k,\lambda_k)$ is well-defined for such
  $k$, and by continuity, $J_k\subseteq J$ for all $k$ sufficiently large. Since
  $A_J$ has full rank, so does $A_{J_k}$ for all $k$ sufficiently large. It remains to observe that $b_k\notin A_{J_k}$ for all $k\in\bN$ sufficiently large. This,  however, can easily be seen (by contradiction),  using that $b_k\to \bar b\not \in \rge A_J$ and $\rge A_{J_k}\subset \rge A_J$ for  all $k\in\bN$ sufficiently large as argued above.
\end{proof}

\noindent
We are now in a position to state the main result of this section. Here, note that we call a (single-valued) map $F:X\subset\bE_1\to\bE_2$ {\em locally Lipschitz} at $\bar x\in X$ if there exist $L,\varepsilon>0$ 
(possibly depending on $\bar x$) such that
\[
\|F(x)-F(y)\|\leq L \|x-y\|\quad \forall x,y\in B_\varepsilon(\bar x).
\footnote{$B_\varepsilon(\bar x):= \bar x+\varepsilon \bB$ is the Euclidean ball with radius $\varepsilon>0$ centered at $\bar x$.}
\]
Recall that we already know from \cref{prop:intermed-implies-weak} and
\cref{th:sufficient-condition-for-uniqueness} that the intermediate condition in
\cref{ass:intermediate} implies uniqueness of solutions at the point in
question.

\begin{theorem}
  \label{thm:directional-differentiability}
  Let $(\bar b,\bar \lambda)\in \R^m\times \R_{++}$ and suppose that
  \cref{ass:intermediate} holds at $\bar{x} := S(\bar{b}, \bar{\lambda})$, where
  $S$ is defined as in~\cref{eq:solution-mapping-b-lambda}. Then:
  \begin{enumerate}[label=(\alph*)]
  \item $S$ is locally Lipschitz at $(\bar{b}, \bar{\lambda})$ with (local)
    Lipschitz modulus
    \begin{align*}
      L %
      \leq \left[ \frac{1}{\sigma_{\min}(A_{J})^{2}} + \frac{1}{1 - \|A_{J} A_{J}^{\dagger}\bar{y}\|} \right] %
      \cdot \left[ \sigma_{\max}\left( A_{J} \right) + \left\| \frac{A_{J}^{\top}(A\bar{x} - \bar{b})}{\bar{\lambda}} \right\| \right].
    \end{align*}
  \item $S$ is directionally differentiable at $(\bar{b}, \bar{\lambda})$ and
    the directional derivative\newline{}
    $S'((\bar{b}, \bar{\lambda}); (\cdot , \cdot)) : \reals^{m} \times \reals
    \to \reals^{n}$ is locally Lipschitz. Moreover, for
    $(q, \alpha) \in \reals^{m} \times \reals$ there exists
    $K = K(q, \alpha) \subseteq J$ with $\supp(\bar{x}) \subseteq K$ such that
    \begin{align*}
      S'((\bar{b}, \bar{\lambda}); (q, \alpha)) %
      = L_{K} \left( B \left( A_{K}^{\top} (\identity - \bar{y} \bar{y}^{\top}) q + \frac{\alpha}{\bar{\lambda}}A_{K}^{\top} (A S(\bar{b}, \bar{\lambda}) - \bar{b}) \right)\right),
    \end{align*}
    where
    $B:= \left( A_{K}^{\top} A_{K} \right)^{-1} + \dfrac{A_{K}^{\dagger} \bar{y}
      (A_{K}^{\dagger}\bar{y})^{\top}}{1 - \bar{y}^{\top} A_{K} A_{K}^{\dagger}
        \bar{y}}$.
  \end{enumerate}
\end{theorem}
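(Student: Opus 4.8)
The plan is to express the solution map through the first-order optimality condition as a parametrized generalized equation and to leverage the strong metric regularity established in \cref{prop:strong-metric-regularity--intermediate}. Since $A\bar{x}\neq\bar{b}$, the fidelity term is $C^1$ near $\bar{x}$, so on a neighborhood of $\bar{x}$ the minimizers of~\cref{eq:SR-LASSO} coincide with the solutions of
\[
0\in h(x,b,\lambda)+\partial\|\cdot\|_1(x),\qquad h(x,b,\lambda):=\tfrac1\lambda A^{\top}\tfrac{Ax-b}{\|Ax-b\|},
\]
where $h$ is continuously differentiable near $(\bar{x},\bar{b},\bar{\lambda})$ and $h(\cdot,\bar{b},\bar{\lambda})+\partial\|\cdot\|_1=T$ is exactly the map of \cref{prop:strong-metric-regularity--intermediate}. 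Writing $\bar{r}:=\bar{b}-A\bar{x}$ and $\bar{y}:=\bar{r}/\|\bar{r}\|$, the reference optimality reads $-h(\bar{x},\bar{b},\bar{\lambda})=\tfrac1{\bar\lambda}A^{\top}\bar{y}\in\partial\|\cdot\|_1(\bar{x})$.

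For the qualitative part of (a), I would invoke the implicit-function theorem for strongly (metrically) regular generalized equations~\cite{dontchev2014implicit}: strong metric regularity of $T$ at $(\bar{x},0)$ yields a single-valued Lipschitz localization $\sigma$ of the solution map of the displayed inclusion around $(\bar{b},\bar{\lambda})$. By convexity each $\sigma(b,\lambda)$ is a global minimizer, and since $\sigma$ is continuous, \cref{lem:local} guarantees that \cref{ass:intermediate} persists at $\sigma(b,\lambda)$; \cref{th:sufficient-condition-for-uniqueness} then forces $\sigma(b,\lambda)$ to be the \emph{unique} minimizer. Hence $S=\sigma$ near $(\bar{b},\bar{\lambda})$ and $S$ is locally Lipschitz.

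The core is (b). Because $\partial\|\cdot\|_1$ is the subdifferential of a piecewise-linear convex function, it is proto-differentiable, with graphical derivative $D(\partial\|\cdot\|_1)(\bar{x}\mid\tfrac1{\bar\lambda}A^{\top}\bar{y})$ obtained componentwise from \cref{lem:SD1Norm}: it forces $u_{J^{C}}=0$, leaves $u_I$ free, and imposes a sign condition on each $i\in J\setminus I$. Combined with strong metric regularity of $T$, and following the template of the companion LASSO analysis~\cite{berk2022lasso}, this upgrades $S$ to \emph{semidifferentiable} at $(\bar{b},\bar{\lambda})$, so that $u:=S'((\bar{b},\bar{\lambda});(q,\alpha))$ is the unique solution of the linearized inclusion
\[
0\in \tfrac1{\bar\lambda\|\bar{r}\|}A^{\top}(\identity-\bar{y}\bar{y}^{\top})Au+\nabla_b h\,q+\nabla_\lambda h\,\alpha+D(\partial\|\cdot\|_1)\bigl(\bar{x}\mid\tfrac1{\bar\lambda}A^{\top}\bar{y}\bigr)(u).
\]
Reading the active set $K=K(q,\alpha)$ (with $\supp(\bar{x})\subseteq K\subseteq J$) off the sign conditions, the active equations reduce to $\tfrac1{\bar\lambda\|\bar{r}\|}A_K^{\top}(\identity-\bar{y}\bar{y}^{\top})A_K\,u_K=-(\nabla_b h\,q+\nabla_\lambda h\,\alpha)_K$ with $u_{K^{C}}=0$; inverting the block via \cref{lem:SM}(a) and using $\nabla_\lambda h=\tfrac1{\bar\lambda^2}A^{\top}\bar{y}$ together with $A_K^{\top}(A\bar{x}-\bar{b})=-\|\bar{r}\|A_K^{\top}\bar{y}$ produces the stated formula for $u$. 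Since it solves a strongly regular, piecewise-linear linearized inclusion, $S'((\bar{b},\bar{\lambda});\cdot)$ is (positively homogeneous and) locally Lipschitz.

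Finally, the explicit modulus in (a) drops out of (b). As $K\subseteq J$ and $A_J$ has full rank, $A_K^{\top}(\identity-\bar{y}\bar{y}^{\top})A_K$ is a principal submatrix of $A_J^{\top}(\identity-\bar{y}\bar{y}^{\top})A_J$, so the block inverse $B$ has $\|B\|\le\lambda_{\max}\bigl([A_J^{\top}(\identity-\bar{y}\bar{y}^{\top})A_J]^{-1}\bigr)$, which \cref{lem:SM}(b) bounds by the first bracket of $L$ (using $\|A_JA_J^{\dagger}\bar{y}\|\le1$ for the looser denominator). Coupling this with $\sigma_{\max}(A_K)\le\sigma_{\max}(A_J)$, $\|A_K^{\top}(A\bar{x}-\bar{b})\|\le\|A_J^{\top}(A\bar{x}-\bar{b})\|$, $\|\identity-\bar{y}\bar{y}^{\top}\|=1$, and $\|q\|,|\alpha|\le\|(q,\alpha)\|$ gives $\|S'((\bar{b},\bar{\lambda});(q,\alpha))\|\le L\|(q,\alpha)\|$. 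Running the same estimate at nearby points of differentiability—where, by \cref{lem:local}, the equicorrelation set only shrinks and each quantity in \cref{lem:SM}(b) is monotone under this shrinking—and invoking Rademacher's theorem yields $\mathrm{lip}(S;(\bar{b},\bar{\lambda}))\le L$. The main obstacle is the semidifferentiability upgrade in (b): passing from graphical-derivative selections (always available under strong metric regularity) to genuine directional derivatives given by the linearized inclusion; I would secure this via proto-differentiability of $\partial\|\cdot\|_1$ and the argument scheme of~\cite{berk2022lasso}.
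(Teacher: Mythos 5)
Your proposal is correct and follows essentially the same skeleton as the paper's proof: both hinge on the strong metric regularity of $T$ from \cref{prop:strong-metric-regularity--intermediate}, proto-differentiability of $\partial\|\cdot\|_1$, the componentwise tangent-cone structure on $I$, $J\setminus I$, $J^{C}$ leading to the linearized inclusion, inversion of $A_K^\top(\identity-\bar y\bar y^\top)A_K$ via \cref{lem:SM}, and persistence of \cref{ass:intermediate} via \cref{lem:local}. You deviate in two places, both legitimate. First, for the qualitative part of (a) the paper invokes the coderivative-based criterion of \cite[Proposition~4.10(b)]{berk2022lasso}, whereas you use the implicit function theorem for strongly regular generalized equations from \cite{dontchev2014implicit} and then identify the Lipschitz localization $\sigma$ with $S$ (via convexity of the objective, \cref{lem:local}, \cref{prop:intermed-implies-weak} and \cref{th:sufficient-condition-for-uniqueness}); this makes the single-valuedness of $S$ near $(\bar b,\bar\lambda)$ more explicit, which the paper delegates to the cited proposition. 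Second, for the modulus in (a) the paper reiterates the directional-differentiability argument at nearby points and evaluates $L=\limsup_{(b,\lambda)\to(\bar b,\bar\lambda)}\max_{\|(q,\alpha)\|\le 1}\|S'((b,\lambda);(q,\alpha))\|$ along a realizing sequence with a frozen index set $K$, while you bound the derivative by Cauchy interlacing (since $A_K^\top(\identity-\bar y\bar y^\top)A_K$ is a principal submatrix of $A_J^\top(\identity-\bar y\bar y^\top)A_J$) and transfer to a neighborhood via Rademacher; the interlacing observation is arguably cleaner than the paper's chain of singular-value inequalities. One small caution on your last step: at nearby $(b,\lambda)$ the per-point bound involves the perturbed residual $r'=b-AS(b,\lambda)$ and its normalization $y'$, not $\bar r,\bar y$, so monotonicity under shrinking of the equicorrelation set alone does not give a bound by $L$ at each nearby point; you additionally need continuity of these quantities in the limit $(b,\lambda)\to(\bar b,\bar\lambda)$ --- which the limsup implicit in the Rademacher characterization of the Lipschitz modulus does supply, so this is a matter of phrasing rather than a gap.
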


\begin{proof}
  We apply~\cite[Proposition~4.10]{berk2023lasso} for 
  $f : (\reals^{m} \times \reals) \times \reals^{n} \to \reals^{n}$  with 
  \begin{align*}
    f((b, \lambda), x) %
    = \frac{1}{\lambda} A^{\top} \partial \|\cdot\|(Ax - b), %
    \quad\forall b \in \reals^m, \; \forall \lambda>0, \; \forall x\in\reals^n,
  \end{align*}
  and
  $F : \reals^{n} \rightrightarrows \reals^{n}, \;F := \partial \| \cdot
  \|_{1}$. Throughout, to simplify notation, we make the identification
  $f(b, \lambda, x) := f((b, \lambda), x)$ (and perform this unnesting
  elsewhere, where appropriate). Under~\cref{ass:intermediate}, it holds that
  $A\bar{x} \neq \bar{b}$, hence, $f$ is continuously differentiable in a
  neighborhood of $(\bar{b}, \bar{\lambda}, \bar{x})$. Additionally, $f$ and $F$
  are monotone, because the (sub)differential operator of a convex function is
  (maximally) monotone~\cite[Chapter~12]{rockafellar1998variational}. We
  organize the proof into three steps.

  \paragraph{Step 1. Local Lipschitz continuity of $S$} By construction,
  $(\bar{b}, \bar{\lambda}, \bar{x}) \in \gph S$ with
  $0 \in f(\bar{b}, \bar{\lambda}, \bar{x}) + F(\bar{x})$. For
  $T(x) := f(\bar{b}, \bar{\lambda}, x) + F(x)$, as
  in~\cref{prop:strong-metric-regularity--intermediate}, \cref{lem:Sum} yields
  \vspace{-6pt}
  \begin{align*}
    D^{*}T ( \bar{x} \mid 0) %
    = D_{x} f(\bar{b}, \bar{\lambda}, \bar{x})^{*} %
    + D^{*}F(\bar{x} \mid - f(\bar{b}, \bar{\lambda}, \bar{x})). 
  \end{align*}
  Hence, \cref{prop:strong-metric-regularity--intermediate} establishes
  finiteness of
  $\left| \left( D^{*}T ( \bar{x} \mid 0) \right)^{-1} \right|^{+}$, giving
  local Lipschitz continuity of $S$ at $(\bar b, \bar \lambda)$ 
  by~\cite[Proposition~4.10(b)]{berk2023lasso} and showing the validity of the first claim in part (a). 
  
  \paragraph{Step 2. Directional differentiability of $S$ at $(\bar b, \bar \lambda)$} Observe that
  \begin{align*}
    S(b, \lambda) = \{ x \in \reals^{n} : 0 \in G(b, \lambda, x)\}, \quad \text{with }
    G(b, \lambda, x) := f(b, \lambda, x) + F(x).
  \end{align*}
  Moreover, $F$ is \emph{proto-differentiable} at
  $(\bar{x}, -f(\bar{b}, \bar{\lambda}, \bar{x}))$
  by~\cite[Remark~1   and  Lemma~4]{friedlander2022perspective}. Hence,
  by~\cite[Proposition~4.10(c)]{berk2023lasso}, the graphical derivative
  $DS(\bar{b}, \bar{\lambda})$ is (single-valued and) locally Lipschitz with
  \begin{align*}
    DS(\bar{b}, \bar{\lambda})(q, \alpha) %
    = \left\{ %
    w \in \reals^{n} : 0 \in DG(\bar{b},\bar{\lambda}, \bar{x}\mid 0)(q,\alpha,w) %
    \right\}, %
    \qquad \forall (q,\alpha) \in \reals^{m} \times \reals.
  \end{align*}
  Using the graphical derivative sum rule in \cref{lem:Sum}(a) gives
  \begin{align*}
    DG(\bar{b}, \bar{\lambda}, \bar{x}\mid 0)(q,\alpha,w) = Df(\bar{b}, \bar{\lambda}, \bar{x})(q,\alpha,w) + DF(\bar{x}\mid -f(\bar{b}, \bar{\lambda}, \bar{x}))(w),
  \end{align*}
  where $Df(\bar{b}, \bar{\lambda}, \bar{x} \mid 0)(q,\alpha,w) %
  = D_{b}f(\bar{b}, \bar{\lambda}, \bar{x}) q + D_{\lambda}f(\bar{b},
  \bar{\lambda}, \bar{x}) \alpha + D_{x}f(\bar{b}, \bar{\lambda}, \bar{x})
  w$. Letting $\bar{r} := \bar{b} - A\bar{x}$ and
  $\bar{y} := \bar{r} / \| \bar{r} \|$, we use \cref{lem:phi} to compute:
\begin{align*}
  Df(\bar{b}, \bar{\lambda}, \bar{x})(q,\alpha,w) %
  & = - \frac{1}{\bar{\lambda} \| \bar{r} \|} \left[ \frac{A^{\top} - A^{\top} \bar{r} \bar{r}^{\top}}{\| \bar{r} \|^{2}} \right] q %
   + \frac{\alpha}{\bar{\lambda}^{2} \| \bar{r} \|} A^{\top} \bar{r} %
       + \frac{1}{\bar{\lambda} \| \bar{r} \|} \left[ A^{\top} A - A^{\top} \frac{\bar{r} \bar{r}^{\top}}{\| \bar{r} \|^2}  A \right]w
  \\
  & = - \frac{A^{\top}}{\bar{\lambda} \| \bar{r} \|} \left[ \left( \identity - \bar{y}\bar{y}^{\top} \right) \left( q - Aw \right) - \frac{\alpha }{\bar{\lambda}} \bar{r} \right].
\end{align*}
Altogether, we obtain that 
\begin{align*}
  0 %
  & \in DG(\bar{b}, \bar{\lambda}, \bar{x}\mid 0) (q,\alpha,w)
  \\
  & = - \frac{A^{\top}}{\bar{\lambda} \| \bar{r} \|} \left[ \left( \identity - \bar{y}\bar{y}^{\top} \right) \left( q - Aw \right) - \frac{\alpha }{\bar{\lambda}} \bar{r} \right] + D(\partial \| \cdot \|_{1})\left(\bar x \mid  A^{\top} \bar{y}\right)(w)
\end{align*}
is equivalent to
\begin{align*}
  \frac{A^{\top}}{\bar{\lambda} \| \bar r \|} \left[ \left( \identity - \bar{y}\bar{y}^{\top} \right) \left( q - Aw \right) - \frac{\alpha }{\bar{\lambda}} \bar{r} \right] \in D(\partial \| \cdot \|_{1})\left(\bar x \mid A^{\top} \bar{y}\right)(w).
\end{align*}
This, in turn, by the definition of the graphical derivative,  is equivalent to
\begin{align}
  \label{eq:graphical-derivative-membership-1}
  \left( w, \frac{A^{\top}}{\bar{\lambda} \| \bar{r} \|} \left[ \left( \identity - \bar{y}\bar{y}^{\top} \right) \left( q - Aw \right) - \frac{\alpha }{\bar{\lambda}} \bar{r} \right] \right) \in T_{\gph \partial \| \cdot \|_{1}} \left( \bar x, A^{\top} \bar{y} \right).
\end{align}
Let $I = \supp(\bar x)$ and recall~\cite[Lemma~4.9]{berk2023lasso}, namely,
\begin{align}
  \label{eq:TC-gph-d1norm-inclusion-1}
  T_{\gph \partial \|\cdot\|_{1}}(\bar x, \bar u) \subseteq \bigtimes_{i = 1}^{n}
  \begin{cases}
    \reals \times \{0\}, & \bar x_{i} \neq 0, \bar u_{i} = \sgn (\bar x_{i}), \\
    \reals_{-} \times \{0\} \cup \{0\} \times \reals_{+}, & \bar x_{i} = 0, \bar u_{i} = -1,\\
    \{0\} \times \reals_{-}  \cup \reals_{+} \times \{0\}, & \bar x_{i} = 0, \bar u_{i} = +1,\\
    \{0\} \times \reals, & \bar x_{i} = 0, |\bar{u}_{i}| < 1.
  \end{cases}
\end{align}
Using that $\| A_{J^{C}}^{\top} \bar{y} \|_{\infty} < \lambda$ and
$\| A_{J}^{\top} \bar{y} \|_{\infty} = \lambda$ with $I \subseteq J$ as well as
$\bar{x}_{J^{C}} = 0$, the inclusion~\cref{eq:TC-gph-d1norm-inclusion-1} and the
membership~\cref{eq:graphical-derivative-membership-1} together imply
\begin{align*}
  \begin{cases}
    \left( w_{i}, \frac{A_{i}^{\top}}{\bar{\lambda} \| \bar{r} \|} \left[ (\identity - \bar{y}\bar{y}^{\top})(q - Aw) - \frac{\alpha}{\bar{\lambda}}\bar{r} \right] \right) \in \reals \times \{0\}, & \forall i \in I, \\
    \left( w_{i}, \frac{A_{i}^{\top}}{\bar{\lambda} \| \bar{r} \|} \left[ (\identity - \bar{y}\bar{y}^{\top})(q - Aw) - \frac{\alpha}{\bar{\lambda}}\bar{r} \right] \right) \in \{0\} \times \reals, & \forall i \in J^{C}, \\
    \left( w_{i}, \frac{A_{i}^{\top}}{\bar{\lambda} \| \bar{r} \|} \left[ (\identity - \bar{y}\bar{y}^{\top})(q - Aw) - \frac{\alpha}{\bar{\lambda}}\bar{r} \right] \right) \in \{0\} \times \reals \cup \reals \times \{0\},  & \forall i \in J\setminus I.
  \end{cases}
\end{align*}
In particular, for any $(q, \alpha)$, $w_{J^{C}} = 0$. Thus, 
$A_{I}^{\top} \left[ (\identity - \bar{y}\bar{y}^{\top}) (q - A_{J} w_{J}) - \frac{\alpha}{\bar{\lambda}}\bar{r} \right] = 0$.
Likewise, for all $i \in J \setminus I$ we have
$
  w_{i} A_{i}^{\top} \left[ (\identity - \bar{y}\bar{y}^{\top})(q - A_{J} w_{J}) - \frac{\alpha}{\bar{\lambda}}\bar{r} \right] = 0$.
Now, set $K := I\cup\set{i \in J \setminus I}{w_{i} \neq 0}$ and note that $I \subseteq K \subseteq J$ and $w_{K^{C}} \equiv 0$. Consequently, 
$A_{K}^{\top} \left[ (\identity - \bar{y}\bar{y}^{\top})(q - A_{K} w_{K}) - \frac{\alpha}{\bar{\lambda}}\bar{r} \right] = 0$,
which is equivalent to
$
  A_{K}^{\top} (\identity - \bar{y}\bar{y}^{\top}) A_{K} w_{K} = A_{K}^{\top} (\identity - \bar{y}\bar{y}^{\top})q - \frac{\alpha}{\bar{\lambda}} A_{K}^{\top} \bar{r}$.
Note that $A_{K}$ has full column rank because $A_{J}$ does (by
\cref{ass:intermediate}). Using that $\bar{y} \notin \rge A_{K}$
because $\bar{b} \not\in \rge A_{J}$, \cref{lem:SM} yields
\begin{align*}
  B := \left[ A_{K}^{\top} (\identity - \bar{y}\bar{y}^{\top}) A_{K} \right]^{-1} = \left( A_{K}^{\top} A_{K}  \right)^{-1} + \frac{A_{K}^{\dagger} \bar{y} (A_{K}^{\dagger}\bar{y})^{\top}}{1 - \bar{y}^{\top} A_{K} A_{K}^{\dagger} \bar{y}}.
\end{align*}
In particular, using that $w_{K^{C}} \equiv 0$ (by definition of $K$), we see
that $w$ and $K$ are uniquely defined for a given $(q, \alpha)$ with
$w = DS(\bar{b}, \bar{\lambda})(q, \alpha)$, where
\begin{align}
  \label{eq:directional-differentiability-wK}
  w_{K} = B \left( A_{K}^{\top} (\identity - \bar{y}\bar{y}^{\top})q - \frac{\alpha}{\bar{\lambda}} A_{K}^{\top} \bar{r} \right), \qquad w_{K^{C}} \equiv 0. 
\end{align}
We conclude that $S$ is directionally differentiable at $(\bar{b}, \bar{\lambda})$ with directional derivative
$S'((\bar{b}, \bar{\lambda}); (q,\alpha) ) =  w$,
where $w = w(q, \alpha)$ is defined as in
\cref{eq:directional-differentiability-wK}. This proves part (b).

\paragraph{Step 3. Estimation of the Lipschitz modulus of $S$} To infer the
Lipschitz bound claimed in part (a) first note that, by \cref{lem:local}
combined with the fact that $S$ is (Lipschitz) continuous near $\bar x$, we can
infer that \cref{ass:intermediate} holds at every point $x=S(b,\lambda)$ for all
$(b,\lambda)$ sufficiently close to $(\bar b,\bar \lambda)$. Therefore, we can
reiterate the whole argument above to infer that $S$ is directionally
differentiable at $(b,\lambda)$ with the corresponding expression for the
directional derivative which is, in addition, (locally Lipschitz) continuous as
a function of the direction for all $(b,\lambda)$ sufficiently close to
$(\bar b,\bar \lambda)$. Hence, by~\cite[Proposition~4.10(c)]{berk2023lasso},
$S$ is locally Lipschitz at $(\bar{b}, \bar{\lambda})$ with modulus
\begin{align*}
  L %
  & := \limsup_{(b, \lambda) \to (\bar{b}, \bar{\lambda})} \max_{\| (q, \alpha) \| \leq 1} \| S'((b, \lambda);(q, \alpha)) \|. %
\end{align*}
Let $(b_{k}, \lambda_{k}) \to (\bar{b},\bar{\lambda})$ with
$ \max_{\| (q,\alpha) \| \leq 1} \| S' ((b_{k}, \lambda_{k}); (q, \alpha))\| \to
L.  $ As $S'((b_{k}, \lambda_{k}); (\cdot,\cdot))$ is continuous (as mentioned
above) for all $k \in \nats$ (sufficiently large), there exists
$(\bar{q}, \bar{\alpha}) \in \mathbb{B}$ and
$\{(q_{k}, \alpha_{k})\}_{k \in \nats} \subseteq \mathbb{B}$ with
$(q_{k}, \alpha_{k}) \to (\bar{q}, \bar{\alpha})$ such that
$\| S'((b_{k}, \lambda_{k}); (q_{k}, \alpha_{k})) \| \to L$.  Let the associated
index sets be $K_{k}$. By finiteness, we may assume without loss of generality
that $K_k\equiv K\subseteq J$. Thus, we have
\begin{align*}
  \left\| S'((b_{k}, \lambda_{k}); (q_{k}, \alpha_{k})) \right\| %
  & = \left\| L_{K} \left( B_{k} \left( A_{K}^{\top} (\identity - \bar{y}_{k}\bar{y}_{k}^{\top}) q_{k} - \frac{\alpha_{k}}{\lambda_{k}} A_{K}^{\top} r_{k} \right) \right) \right\| %
  \\
  & \leq \lambda_{\max}(B_{k}) \cdot \left\| A_{K}^{\top} (\identity - \bar{y}_{k}\bar{y}_{k}^{\top}) q_{k} - \frac{\alpha_{k}}{\lambda_{k}} A_{K}^{\top} r_{k} \right\|, %
\end{align*}
using that $\| L_{K} \| \leq 1$ and where
$r_{k} :=  b_{k} - A S(b_{k}, \lambda_{k})$,
$\bar{y}_{k} := r_{k} / \| r_{k} \|$, $B_{k} := \left[ A_{K}^{\top} (\identity - \bar{y}_{k}\bar{y}_{k}^{\top}) A_{K} \right]^{-1}$.
Here, observe that $B_k$ is well-defined as $\rge A_K\subseteq \rge A_J$ and $\bar y\notin A_J$, thus $\bar y_k\notin \rge A_K$ (for all $k$ sufficiently large). 
Passing to the limit yields
\begin{align*}
  L %
  & \leq \lambda_{\max}(B) \cdot \left\| A_{K}^{\top} (\identity - \bar{y}\bar{y}^{\top}) \bar{q} - \frac{\bar{\alpha}}{\bar{\lambda}} A_{K}^{\top} \bar r \right\| %
  \\
  & \leq \left[ \frac{1}{\sigma_{\min}(A_{K})^{2}} + \frac{\| A_{K}^{\top} \bar{y} \|^{2}}{1 - \bar{y}^{\top} A_{K} A_{K}^{\dagger}\bar{y}} \right] \cdot \left[ \sigma_{\max}\left( A_{K}^{\top} (\identity - \bar{y}\bar{y}^{\top}) \right) + \left\| \frac{A_{K}^{\top}(A\bar{x} - \bar{b})}{\bar{\lambda}} \right\| \right]
  \\
  & \leq \left[ \frac{1}{\sigma_{\min}(A_{K})^{2}} + \frac{1}{1 - \|A_{K} A_{K}^{\dagger}\bar{y}\|} \right] \cdot \left[ \sigma_{\max}\left( A_{K} \right) + \left\| \frac{A_{K}^{\top}(A\bar{x} - \bar{b})}{\bar{\lambda}} \right\| \right]
  \\
  & \leq \left[ \frac{1}{\sigma_{\min}(A_{J})^{2}} + \frac{1}{1 - \|A_{J} A_{J}^{\dagger}\bar{y}\|} \right] \cdot \left[ \sigma_{\max}\left( A_{J} \right) + \left\| \frac{A_{J}^{\top}(A\bar{x} - \bar{b})}{\bar{\lambda}} \right\| \right].
\end{align*}
Here, the second inequality uses \cref{lem:SM}(b) for the first factor, and that $\|(\bar q,\bar\alpha)\|\leq  1$ for the second.  
The penultimate inequality uses that $\|\bar y\|=1$, hence $\identity-\bar y\bar y^T$ is a projection, and thus
$\sigma_{\max}(A^T_K(\identity-\bar y\bar y^T))\leq \|A_K\|\cdot\|\identity-\bar y\bar y^T\|\leq \|A_K\|$. The last inequality uses that $\sigma_{\min}(A_J)\leq \sigma_{\min}(A_K)$ and $\sigma_{\max}(A_J)\geq  \sigma_{\max}(A_K)$; note  $\|A_{K} A_{K}^{\dagger}\bar{y}\|\leq \|A_{J} A_{J}^{\dagger}\bar{y}\|$: projecting onto a larger subspace does not decrease norm.
\end{proof}

\begin{remark} 
\label{rmk:directional-diff-neighborhood}
An inspection of the proof of \cref{thm:directional-differentiability} reveals
that the claim of \cref{thm:directional-differentiability}(b) can be
strengthened: the argument used at the beginning of Step 3 of the proof shows
that $S$ is directionally differentiable (with locally Lipschitz directional
derivative) not only at $(\bar b, \bar \lambda)$, but in a whole neighborhood.\hfill$\diamond$
\end{remark}

\section{Continuous differentiability of the solution function}
\label{sec:continuous-differentiability-solution-function}

In this section, we show that, under~\cref{ass:Strong}, the solution map is
continuously differentiable in a neighborhood of the (unique) solution. This is essentially a direct corollary of the directional differentiability result from \cref{thm:directional-differentiability} (b) once we establish that the support of solutions is locally constant. To this end, recall from~\cite[(2.4)]{berk2023lasso} that, for a (closed) proper, convex function $f:\R^n\to\rp$, one has
\begin{align}
  \label{eq:ri-normal-cone-connection}
  y \in \ri (\partial f(\bar{x})) \qquad\iff \qquad (y, -1) \in \ri N_{\epi f} (\bar x, f(\bar{x})).
\end{align}

\begin{lemma}[Constancy of support]
  \label{lem:constancy-of-support}
  For
  $(\bar{A}, \bar{b}, \bar{\lambda}) \in \reals^{m\times n} \times \reals^{m}
  \times \reals_{++}$ let $\bar{x}$ be the unique minimizer of~\cref{eq:SR-LASSO} such
  that \cref{ass:Strong}(ii) holds. Assume that
  $(A_{k}, b_{k}, \lambda_{k}) \to (\bar{A}, \bar{b}, \bar{\lambda})$ and that
  $x_{k}$ is a solution of~\cref{eq:SR-LASSO} given
  $(A_{k}, b_{k}, \lambda_{k})$ such that $x_{k} \to \bar x$. Then
  $\supp(x_{k}) = \supp (\bar x)$ for all $k$ sufficiently large.
\end{lemma}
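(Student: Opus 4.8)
The plan is to prove the two inclusions $\supp(\bar{x})\subseteq\supp(x_k)$ and $\supp(x_k)\subseteq\supp(\bar{x})$ separately for all large $k$, and then combine them. The key preliminary observation is that \cref{ass:Strong}(ii) forces the residual at $\bar{x}$ to be nonzero: were $\bar{A}\bar{x}=\bar{b}$, both sides of the inequality in \cref{ass:Strong}(ii) would vanish, contradicting its strictness. Writing $\bar{r}:=\bar{b}-\bar{A}\bar{x}\neq0$ and $\bar{y}:=\bar{r}/\|\bar{r}\|$, continuity of $(A,b,x)\mapsto b-Ax$ together with $(A_k,b_k,x_k)\to(\bar{A},\bar{b},\bar{x})$ gives $b_k-A_kx_k\to\bar{r}\neq0$; hence $A_kx_k\neq b_k$ for all large $k$, so the normalized residuals $y_k:=(b_k-A_kx_k)/\|A_kx_k-b_k\|$ are well defined and $y_k\to\bar{y}$.

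First I would dispatch the easy inclusion $\supp(\bar{x})\subseteq\supp(x_k)$. For each $i\in I:=\supp(\bar{x})$ we have $\bar{x}_i\neq0$, and since $x_k\to\bar{x}$ componentwise, $x_{k,i}\to\bar{x}_i\neq0$, so $x_{k,i}\neq0$ for all large $k$; as $I$ is finite a single threshold $k$ works uniformly, giving $I\subseteq\supp(x_k)$.

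The substantive inclusion is $\supp(x_k)\subseteq I$, which is where \cref{ass:Strong}(ii) enters. Dividing \cref{ass:Strong}(ii) by $\|\bar{A}\bar{x}-\bar{b}\|=\|\bar{r}\|>0$ yields the \emph{strict} bound $|\bar{A}_i^\top\bar{y}|<\bar{\lambda}$ for every $i\in I^C$. Fix such an $i$. By $A_{k,i}\to\bar{A}_i$, $y_k\to\bar{y}$ and $\lambda_k\to\bar{\lambda}$ one has $|A_{k,i}^\top y_k|\to|\bar{A}_i^\top\bar{y}|<\bar{\lambda}=\lim_k\lambda_k$, so $|A_{k,i}^\top y_k|<\lambda_k$ for all large $k$. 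On the other hand, the first-order optimality conditions for $x_k$ (\emph{cf.}~\cref{prop:FR}(b)(iii) together with \cref{cor:Invar}(b) applied to $(A_k,b_k,\lambda_k)$, valid since $A_kx_k\neq b_k$) read $A_k^\top y_k\in\lambda_k\,\partial\|\cdot\|_1(x_k)$, so by \cref{lem:SD1Norm}(a) the condition $x_{k,i}\neq0$ would force $|A_{k,i}^\top y_k|=\lambda_k$. The strict inequality just established therefore implies $x_{k,i}=0$. Since $I^C$ is finite, the threshold $k$ can be chosen uniformly over $i\in I^C$, yielding $\supp(x_k)\subseteq I$.

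Combining the two inclusions gives $\supp(x_k)=\supp(\bar{x})$ for all large $k$. The only delicate point is the interplay of limits in the second inclusion: it is essential that the bound $|\bar{A}_i^\top\bar{y}|<\bar{\lambda}$ be \emph{strict} (so that it persists under the data perturbation) and that the normalized residuals $y_k$ converge, which is precisely what the preliminary step secures via $\bar{A}\bar{x}\neq\bar{b}$; the finiteness of $I$ and $I^C$ then upgrades the pointwise-in-$i$ statements to hold uniformly for all sufficiently large $k$.
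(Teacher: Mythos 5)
Your proof is correct, but it follows a genuinely different route from the paper's. The paper proves this lemma by reduction to machinery from the authors' earlier LASSO work: it sets $\bar z:=(\bar x,\|\bar x\|_1)$, $\Omega:=\epi\|\cdot\|_1$, $\phi(x,t):=\frac{1}{\bar\lambda}\|\bar Ax-\bar b\|+t$, observes that \cref{ass:Strong}(ii) together with \cref{eq:ri-normal-cone-connection} means $\bar z$ is \emph{nondegenerate} (\ie{} $-\nabla\phi(\bar z)\in\ri N_\Omega(\bar z)$), and then invokes the identifiability argument of~\cite[Lemma~4.7]{berk2022lasso} verbatim. You instead give a direct, self-contained perturbation argument on the first-order conditions: $\supp(\bar x)\subseteq\supp(x_k)$ follows from componentwise convergence, and $\supp(x_k)\subseteq\supp(\bar x)$ follows because the strict inequalities $|\bar A_i^{\top}\bar y|<\bar\lambda$, $i\in I^C$ (which are exactly \cref{ass:Strong}(ii) normalized by $\|\bar r\|$), persist under convergence of $(A_k,b_k,\lambda_k,x_k)$ and of the normalized residuals $y_k\to\bar y$, while $x_{k,i}\neq 0$ would force the equality $|A_{k,i}^{\top}y_k|=\lambda_k$ via \cref{lem:SD1Norm}(a). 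The two arguments rest on the same mathematical fact --- nondegeneracy, since $\ri\p\|\cdot\|_1(\bar x)$ is by \cref{lem:SD1Norm}(b) precisely the set where the off-support dual inequalities are strict --- but your version trades the paper's brevity and its placement within general active-set identification theory for transparency and independence from external results; it also makes explicit the preliminary facts the paper leaves implicit (that \cref{ass:Strong}(ii) forces $\bar A\bar x\neq\bar b$, hence $A_kx_k\neq b_k$ and $y_k\to\bar y$), and correctly handles the uniformity over the finitely many indices. One could quibble that citing \cref{prop:FR}(b)(iii)/\cref{cor:Invar}(b) is heavier than needed --- plain first-order optimality plus smoothness of the fidelity term at $x_k$ suffices --- but the reasoning is sound as written.
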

\begin{proof} Set $\bar{z} := (\bar{x}, \| \bar{x} \|_{1})$,
  $\Omega := \epi \| \cdot \|_{1}$ and
  $\phi(x,t) := \frac{1}{\bar{\lambda}} \| \bar{A} x - \bar{b}\| + t$.  The
  proof follows the same reasoning as the proof
  of~\cite[Lemma~4.7]{berk2023lasso}, and all that needs to be observed is the
  fact that, by \cref{ass:Strong}(ii), and~\cref{eq:ri-normal-cone-connection},
  $\bar{z}$ is \emph{nondegenerate}, \ie{}
  $- \nabla \phi(\bar{z}) \in \ri N_{\Omega}(\bar{z})$.
\end{proof}

\noindent
We record the fact that \cref{ass:Strong} is a local property.

\begin{remark}[\cref{ass:Strong} is a local property]
  \label{rem:StrongLocal}
  Assume that \cref{ass:Strong} holds at $\bar x=S(\bar b, \bar \lambda)$. Since
  $S$ is (locally Lipschitz) continuous around $(\bar b,\bar \lambda)$,
  \cref{lem:constancy-of-support} yields a neighborhood $\cV$ of
  $(\bar b,\bar \lambda)$ such that
  $\supp(S(b,\lambda))\equiv \supp(S(\bar b,\bar\lambda))$ and, consequently,
  \cref{ass:Strong} holds at $S(b,\lambda)$ for all
  $(b,\lambda)\in \cV$.\hfill$\diamond$
\end{remark}

\begin{theorem}
  \label{thm:main-theorem}
  For $(\bar{b}, \bar{\lambda}) \in \reals^{m} \times \reals_{++}$ let $\bar{x}$
  be a solution of~\cref{eq:SR-LASSO} with $I := \supp(\bar{x})$ such
  that~\Cref{ass:Strong} holds. Then $S$, defined as in~\cref{eq:solution-mapping-b-lambda},
  is continuously differentiable at $(\bar{b}, \bar{\lambda})$ with derivative
  \begin{align*}
    DS(\bar{b}, \bar{\lambda})(q, \alpha) = L_{I} \left( \left[ \left( A_{I}^{\top} A_{I} \right)^{-1} + \frac{A_{I}^{\dagger} \bar{y} (A_{I}^{\dagger} \bar{y})^{\top}}{1 - \bar{y}^{\top} A_{I} A_{I}^{\dagger} \bar{y}} \right] \cdot \left[ A_{I}^{\top} (\identity - \bar{y}\bar{y}^{\top})q - \frac{\alpha}{\bar{\lambda}} A_{I}^{\top} \bar r \right] \right),
  \end{align*}
 for  $\bar r := \bar{b} - A\bar{x}$, $\bar{y} := \bar r/ \| \bar r \|$. In particular, $S$ is
  locally Lipschitz at $(\bar{b}, \bar{\lambda})$ with constant
  \begin{align*}
    L \leq \left[ \frac{1}{\sigma_{\min}(A_{I})^{2}} + \frac{1}{1 - \|A_{I} A_{I}^{\dagger}\bar{y}\|} \right] \cdot \left[ \sigma_{\max}\left( A_{I} \right) + \left\| \frac{A_{I}^{\top}\bar r}{\bar{\lambda}} \right\| \right].
  \end{align*}
\end{theorem}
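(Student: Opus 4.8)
The plan is to obtain this result as a corollary of \cref{thm:directional-differentiability}, exploiting the fact established in the proof of \cref{prop:StrongInter} that under \cref{ass:Strong} the equicorrelation set collapses onto the support, i.e.\ $J = I$. Since \cref{ass:Strong} implies \cref{ass:intermediate} (\cref{prop:StrongInter}), \cref{thm:directional-differentiability} applies at $(\bar b,\bar\lambda)$: the map $S$ is locally Lipschitz and directionally differentiable there, with a directional derivative indexed by a set $K = K(q,\alpha)$ satisfying $\supp(\bar x)\subseteq K\subseteq J$. But $J = I = \supp(\bar x)$ under \cref{ass:Strong}, so $K\equiv I$ is forced, \emph{independently} of the direction $(q,\alpha)$. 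Consequently the directional derivative in \cref{thm:directional-differentiability}(b) is linear in $(q,\alpha)$ and equals the asserted expression (using $A\bar x-\bar b = -\bar r$). A locally Lipschitz function that is directionally differentiable with directional derivative linear in the direction is Fréchet differentiable in finite dimensions; hence $S$ is differentiable at $(\bar b,\bar\lambda)$ with $DS(\bar b,\bar\lambda)$ equal to the stated linear map, settling the derivative formula.

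To upgrade differentiability to \emph{continuous} differentiability, I would invoke \cref{rem:StrongLocal} (itself a consequence of \cref{lem:constancy-of-support}): there is a neighborhood $\cV$ of $(\bar b,\bar\lambda)$ on which \cref{ass:Strong} holds at every solution $S(b,\lambda)$ and the support is constant, $\supp(S(b,\lambda))\equiv I$. The previous step then applies verbatim at each $(b,\lambda)\in\cV$, yielding
\[
DS(b,\lambda)(q,\alpha) = L_{I}\!\left( B(b,\lambda)\left[A_{I}^{\top}(\identity - y y^{\top})q - \tfrac{\alpha}{\lambda}A_{I}^{\top} r\right]\right),
\]
with $r := b - AS(b,\lambda)$, $y := r/\|r\|$ and $B(b,\lambda) := [A_{I}^{\top}(\identity - y y^{\top})A_{I}]^{-1}$, where the inverse exists throughout $\cV$ by \cref{lem:SM}(a), since $A_{I}$ has full column rank and $y\notin\rge A_{I}$ — both stable under small perturbations of $(b,\lambda)$ because $\bar b\notin\rge A_{I}$ and $A\bar x\neq\bar b$. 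As $S$ is continuous (indeed locally Lipschitz), the maps $(b,\lambda)\mapsto r,\,y,\,B(b,\lambda)$ are continuous on $\cV$ (matrix inversion being continuous wherever defined), so $(b,\lambda)\mapsto DS(b,\lambda)$ is continuous; that is, $S\in C^{1}$ near $(\bar b,\bar\lambda)$.

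The Lipschitz bound then follows by estimating the operator norm of the linear map $DS(\bar b,\bar\lambda)$, which controls the local Lipschitz modulus. Using $\|L_{I}\|\le 1$ gives $\|DS(\bar b,\bar\lambda)(q,\alpha)\| \le \lambda_{\max}(B)\,\|A_{I}^{\top}(\identity-\bar y\bar y^{\top})q - \tfrac{\alpha}{\bar\lambda}A_{I}^{\top}\bar r\|$; I would bound $\lambda_{\max}(B)$ via \cref{lem:SM}(b) and the second factor by the triangle inequality together with $\|\identity-\bar y\bar y^{\top}\|\le 1$ and $\|(q,\alpha)\|\le 1$. This reproduces, with $K$ replaced throughout by $I$, the chain of inequalities at the end of the proof of \cref{thm:directional-differentiability}, giving the claimed estimate.

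The main obstacle I anticipate is the passage from directional to continuous differentiability: the linearity of the directional derivative (coming from $K\equiv I$) only yields pointwise Fréchet differentiability at $(\bar b,\bar\lambda)$, and promoting this to $C^{1}$ requires the local constancy of the support from \cref{lem:constancy-of-support}/\cref{rem:StrongLocal}, so that the index set $I$ — and hence the very functional form of $DS$ — does not jump in a neighborhood. Once that is in hand, continuity of $DS$ reduces to continuity of $S$ and of matrix inversion, and the Lipschitz estimate is routine.
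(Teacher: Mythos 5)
Your proposal is correct and follows essentially the same route as the paper: reduce to \cref{thm:directional-differentiability} via $I=J$ under \cref{ass:Strong}, observe that $K\equiv I$ forces linearity of the directional derivative (hence differentiability), and then invoke \cref{rem:StrongLocal}/\cref{lem:constancy-of-support} to propagate the argument over a neighborhood and obtain continuity of $DS$. The only cosmetic differences are that you justify the passage from linear directional derivative to Fr\'echet differentiability by the general finite-dimensional fact for locally Lipschitz maps (the paper cites a proposition from its predecessor work) and you rederive the Lipschitz bound by estimating the operator norm rather than inheriting it directly from \cref{thm:directional-differentiability} with $J=I$; both are sound.
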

\begin{proof} Recalling that \cref{ass:Strong} implies \cref{ass:intermediate} (due to $I=J$), we can already infer the Lipschitz bound from \cref{thm:directional-differentiability}. In addition, we can revisit the proof of the directional differentiability of $S$ under this premise to infer 
\[
S'((\bar b,\bar \lambda);(q,\alpha))=L_{I} \left( \left[ \left( A_{I}^{\top} A_{I} \right)^{-1} + \frac{A_{I}^{\dagger} \bar{y} (A_{I}^{\dagger} \bar{y})^{\top}}{1 - \bar{y}^{\top} A_{I} A_{I}^{\dagger} \bar{y}} \right] \cdot \left[ A_{I}^{\top} (\identity - \bar{y}\bar{y}^{\top})q - \frac{\alpha}{\bar{\lambda}} A_{I}^{\top} \bar r \right] \right).
\]
This directional derivative is linear in the direction $(q,\alpha)$, because $I=K(q,\alpha)=J$ here, and thus (see \cite[Proposition 4.10(c)]{berk2021lasso}) $S$ is, in fact, differentiable at $(\bar b,\bar\lam)$. Now, \cref{rem:StrongLocal} yields a neighborhood $\cV$ of $(\bar b,\bar \lam)$  such that 
\cref{ass:Strong} holds at $S(b,\lambda)$  with $\supp(S(b,\lambda))= I$ for all $(b,\lambda)\in \cV$. Therefore, reiterating the above argument, $S$ is differentiable at $(b,\lambda)\in \cV$ with the respective derivative which, by the constancy of the support, can be seen to be continuous. This proves continuous differentiability. 
\end{proof}

\begin{remark}
  \label{rmk:bk-not-in-Rge-A_I}
  In practice, it is reasonable to expect $\bar{b} \not\in \rge A_{I}$ in cases
  of interest to compressed sensing. There, it is interesting to consider
  $|I| \ll m$. Under mild assumptions, if $\bar{b}$ has been corrupted by random
  noise then it will be ``full dimensional'', in the sense of not being contained in
  any of the possible subspaces $\rge A_{I}$. \hfill$\diamond$
  \end{remark}

\begin{corollary}
\label{cor:Lipschitz_lambda}
  For $(\bar{b}, \bar{\lambda}) \in \reals^{m} \times \reals_{++}$ let $\bar{x}$
  be a solution of~\cref{eq:SR-LASSO} such
  that~\Cref{ass:Strong} holds and let $I := \supp(\bar{x})$. Then
  \begin{align*}
    S : \lambda \in \reals_{++} \mapsto \argmin_{x \in \reals^{n}} \left\{ \| Ax - \bar{b} \| + \lambda \| x \|_{1} \right\}
  \end{align*}
  is continuously differentiable at $\bar{\lambda}$ with derivative
  \begin{align*}
    DS(\bar{\lambda})(\alpha) = \frac{\alpha}{\bar{\lambda}} \left[ A_{I}^{\dagger} \bar{r} + \frac{A_{I}^{\dagger} \bar{y} (A_{I}^{\dagger} \bar{y})^{\top} A_{I}^{\top} \bar{r}}{1 - \bar{y}^{\top} A_{I} A_{I}^{\dagger} \bar{y}} \right], \quad \forall \alpha \in \mathbb{R},
  \end{align*}
  where $\bar{r} := \bar{b} - A\bar{x}$, $\bar{y} := \bar{r}/ \| \bar{r} \|$. In particular, $S$ is
  locally Lipschitz at $\bar{\lambda}$ with constant
  \begin{align}
    \label{eq:lipschitz_ub_lamda}
    L \leq \frac{1}{\bar{\lambda}} \left\| A_{I}^{\dagger} \bar{r} \right\| \cdot \left| 1-V \right|^{-1} %
    \leq \frac{\| A \bar{x} - \bar{b} \|}{\bar{\lambda} \cdot \sigma_{\min}(A_{I}) \cdot |1 - V|}, \quad V := \bar{y}^{\top} A_{I}A_{I}^{\dagger} \bar{y}.
  \end{align}
\end{corollary}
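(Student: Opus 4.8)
The plan is to derive this as a specialization of \cref{thm:main-theorem}. The single-variable map $\lambda \mapsto S(\lambda)$ is nothing but the restriction of the two-variable solution map $(b,\lambda)\mapsto S(b,\lambda)$ of \cref{eq:solution-mapping-b-lambda} to the affine line $b\equiv\bar b$. Since \cref{ass:Strong} holds at $\bar x=S(\bar b,\bar\lambda)$, \cref{thm:main-theorem} yields that $(b,\lambda)\mapsto S(b,\lambda)$ is continuously differentiable at $(\bar b,\bar\lambda)$. Composing this $C^1$ map with the affine (hence $C^1$) inclusion $\lambda\mapsto(\bar b,\lambda)$ preserves continuous differentiability, and the chain rule gives $DS(\bar\lambda)(\alpha)=DS(\bar b,\bar\lambda)(0,\alpha)$. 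Thus the first step is to set $q=0$ in the derivative formula of \cref{thm:main-theorem}, which annihilates the term $A_I^\top(\identity-\bar y\bar y^\top)q$ and leaves only the part proportional to $\alpha$, namely $\tfrac{\alpha}{\bar\lambda}L_I(BA_I^\top\bar r)$ with $B=(A_I^\top A_I)^{-1}+\tfrac{A_I^\dagger\bar y(A_I^\dagger\bar y)^\top}{1-V}$.

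The step I expect to require the most care is the algebraic simplification of this expression. Using the full-rank identity $A_I^\dagger=(A_I^\top A_I)^{-1}A_I^\top$ (valid since \cref{ass:Strong}(i) forces $\ker A_I=\{0\}$), the product $BA_I^\top\bar r$ expands to $A_I^\dagger\bar r+\tfrac{A_I^\dagger\bar y(A_I^\dagger\bar y)^\top A_I^\top\bar r}{1-V}$, which is the bracketed term in the claimed derivative (I suppress the embedding $L_I$ by identifying the derivative with its nonzero block on $I$). To see that this collapses cleanly, write $\bar r=\|\bar r\|\bar y$ and compute $(A_I^\dagger\bar y)^\top A_I^\top\bar r=\bar y^\top A_IA_I^\dagger\bar r=\|\bar r\|\,\bar y^\top A_IA_I^\dagger\bar y=\|\bar r\|\,V$. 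Hence the rank-one correction contributes $\tfrac{V}{1-V}A_I^\dagger\bar r$, so the whole bracket equals $\bigl(1+\tfrac{V}{1-V}\bigr)A_I^\dagger\bar r=\tfrac{1}{1-V}A_I^\dagger\bar r$. Note that $1-V\neq0$ is precisely the invertibility hypothesis of \cref{lem:SM}(a), which is in force because $\bar y\notin\rge A_I$ by \cref{ass:Strong}(i).

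For the Lipschitz estimate I would use that a $C^1$ map has local Lipschitz modulus at $\bar\lambda$ equal to the operator norm of its derivative; as the direction $\alpha$ ranges over the one-dimensional space $\reals$, this operator norm is simply $\|DS(\bar\lambda)(1)\|$. By the simplification above, $\|DS(\bar\lambda)(1)\|=\tfrac{1}{\bar\lambda}\|A_I^\dagger\bar r\|\,|1-V|^{-1}$, which is the first bound in \cref{eq:lipschitz_ub_lamda} (in fact an equality). The second bound then follows from $\|A_I^\dagger\bar r\|\leq\|A_I^\dagger\|\,\|\bar r\|=\sigma_{\min}(A_I)^{-1}\|\bar r\|$, together with $\|\bar r\|=\|A\bar x-\bar b\|$, completing the proof.
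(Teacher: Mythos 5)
Your proposal is correct and follows essentially the same route as the paper: specialize the derivative formula of \cref{thm:main-theorem} to the direction $(0,\alpha)$, collapse the rank-one correction via $\bar r=\|\bar r\|\bar y$ and $(A_I^\dagger\bar y)^\top A_I^\top\bar r=\|\bar r\|V$ to obtain $BA_I^\top\bar r=\frac{1}{1-V}A_I^\dagger\bar r$, and then bound $\|A_I^\dagger\bar r\|$ by $\|\bar r\|/\sigma_{\min}(A_I)$. Your added remarks (the explicit chain-rule justification for restricting to $b\equiv\bar b$, and the observation that the first bound in \cref{eq:lipschitz_ub_lamda} is in fact an equality) are accurate refinements of details the paper leaves implicit.
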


\begin{proof}
  In the proof of~\cref{thm:main-theorem}, the expression for the derivative,
  when $S$ is a function of $\lambda$ only, clearly reduces to
  $ S'(\bar{\lambda}; \alpha) = DS(\bar{\lambda})(\alpha) %
  = L_{I} \left( \frac{\alpha}{\bar{\lambda}} B A_{I}^{\top} \bar{r} \right)$
  where $B$ is defined as in \Cref{thm:directional-differentiability}(b).
  Accordingly, recalling that $V = \bar{y}^{\top} A_{I}A_{I}^{\dagger} \bar{y}$,
  \begin{align*}
    B A_{I}^{\top} \bar{r}
     = \left[ \left( A_{I}^{\top} A_{I} \right)^{-1} + \frac{A_{I}^{\dagger} \bar{y} (A_{I}^{\dagger} \bar{y})^{\top}}{1 - V} \right] A_{I}^{\top} \bar{r} %
     = A_{I}^{\dagger} \bar{r} + \frac{A_{I}^{\dagger} \bar{y} (A_{I}^{\dagger} \bar{y})^{\top}A_{I}^{\top} \bar{r}}{1 - V}
     = A_{I}^{\dagger} \bar{r} + \frac{A_{I}^{\dagger} \bar{r} V}{1 - V}.
  \end{align*}
  In particular,
  \begin{align*}
    L \leq \frac{1}{\bar{\lambda}}\left\| L_{I}\left( BA_{I}^{T} \bar{r} \right) \right\| %
       \leq \frac{1}{\bar{\lambda}} \left\| A_{I}^{\dagger} \bar{r} \right\| \cdot \left| 1 + \frac{V}{1-V} \right| %
        = \frac{1}{\bar{\lambda}} \left\| A_{I}^{\dagger} \bar{r} \right\| \cdot \left| 1-V \right|^{-1}. %
  \end{align*}
\end{proof}

\section{SR-LASSO \vs{} LASSO}
\label{sec:sr-vs-uc}

In this section we compare the Lipschitz behavior for the \srlasso{} solution map with that of the (unconstrained) \lasso{}. We draw theoretical comparisons (in \cref{sec:theoretical-comparison}) using the Lipschitz bound in~\cref{cor:Lipschitz_lambda} and an analogous bound derived in~\cite{berk2023lasso}; and numerical comparisons in~\cref{sec:numerical-lipschitz-comparison}.

\subsection{Comparison of Lipschitz bounds}
\label{sec:theoretical-comparison}

Let us recall that, for given $(A,b,\lambda) \in \mathbb{R}^{m \times n} \times \mathbb{R}^m \times \mathbb{R}_{++}$, the (unconstrained) \lasso{} is given by
\begin{align}
  \label{eq:unconstrained-LASSO}
  \min_{z \in \reals^{n}} \frac{1}{2} \| Az - b \|^{2} + \lambda \| z \|_{1}.
\end{align}
As mentioned in the introduction, the vital difference with respect to the \srlasso{} is the square on the data fidelity term. A variational analysis of its solution map is carried out in~\cite{berk2023lasso}. Here, we want to compare Lipschitz bounds for SR-LASSO and LASSO from a theoretical viewpoint. For the sake of simplicity, we focus on the regularity of the solution map with respect to the tuning parameter $\lambda$, although a similar comparison can be  made when the solution map is considered as a function of $(b,\lambda)$. To denote Lipschitz constants associated with \srlasso{} and (unconstrained) \lasso{}, we shall use the subscripts SR and UC, respectively.

\Cref{cor:Lipschitz_lambda} states that, under  \Cref{ass:Strong} with associated $(\bar b, \bar \lambda) \in \mathbb{R}^m \times \mathbb{R}_{++}$, a Lipschitz bound for the \srlasso{} solution map at the point $\bar \lambda$ (corresponding to the unique solution $\bar x_{\mathrm{SR}}$) is
\begin{equation}
\label{eq:Lipschitz_lambda_SR-LASSO}
L_{\mathrm{SR}} \leq \frac{1}{\bar \lambda}\|A_{I_\mathrm{SR}}^\dagger \bar r_{\mathrm{SR}}\|\cdot\left|1-\bar{y}^\top A_{I_\mathrm{SR}} A_{I_\mathrm{SR}}^\dagger \bar{y}\right|^{-1},
\end{equation}
where $I_\mathrm{SR} := \supp(\bar x_\mathrm{SR})$, $\bar{y} := \bar r_{\mathrm{SR}}/\|\bar r_{\mathrm{SR}}\|$ and $\bar{r}_{\mathrm{SR}} := \bar{b} - A\bar{x}_{\mathrm{SR}}$.

An analogous version of this bound for the \lasso{} can be derived from results in \cite{berk2023lasso}. Under \cite[Assumption~4.4]{berk2023lasso} with associated $(\bar b, \bar \lambda) \in \mathbb{R}^m \times \mathbb{R}_{++}$ (which is the analogue of \Cref{ass:Strong} for the \lasso{} case), \ie{} for $\bar x_{\mathrm{UC}}$ solving \cref{eq:unconstrained-LASSO} with $(b,\lambda) = (\bar b, \bar \lambda)$ such that
$$
A_I \text{ has full column rank} \quad \text{and} \quad \left\|A^\top_{I^C}(\bar b - A \bar x_{\text{UC}})\right\|_{\infty} < \lambda,
$$
where $I_{\text{UC}} := \supp(\bar x_{\text{UC}})$. In this case, an inspection of the proof of \cite[Corollary~4.16]{berk2023lasso} reveals that the derivative of the \lasso{} solution map $S_{\mathrm{UC}}$ at $\bar \lambda$ satisfies 
$
\| S_{\mathrm{UC}}'(\bar\lambda)\|
\leq \left\|\frac{1}{\bar \lambda}A_{I_{\mathrm{UC}}}^\dagger \bar r_{\mathrm{UC}}\right\| 
$,
where $\bar r_{\mathrm{UC}} := \bar b - A \bar x_{\mathrm{UC}}$, $\bar x_{\mathrm{UC}}$ is the unique LASSO solution, and $I_{\mathrm{UC}} := \supp(\bar x_{\mathrm{UC}})$. This leads, in turn, to the following Lipschitz bound: 
\begin{equation}
\label{eq:Lipschitz_lambda_LASSO}
L_{\mathrm{UC}} \leq \frac{1}{\bar \lambda}\|A_{I_{\mathrm{UC}}}^\dagger \bar r_{\mathrm{UC}}\|.
\end{equation}
We are now in a position to compare the two Lipschitz bounds \cref{eq:Lipschitz_lambda_SR-LASSO} and \cref{eq:Lipschitz_lambda_LASSO}. Under the respective ``strong'' assumptions (\ie{} \Cref{ass:Strong} and \cite[Assumption~4.4]{berk2023lasso}) and supposing that $\bar x_{\mathrm{SR}} \approx \bar{x}_{\mathrm{UC}}$ (which, in turn, implies $\bar r_{\mathrm{SR}} \approx \bar r_{\mathrm{UC}}$) and $I_{\mathrm{SR}} \approx I_{\mathrm{UC}}$, the only difference between the two bounds is the multiplicative term
$|1-\bar{y}^\top A_{I_\mathrm{SR}} A_{I_\mathrm{SR}}^\dagger \bar{y}|^{-1}$
present in the \srlasso{} case. Since $\|\bar{y}\|=1$ and recalling that $A_{I_\mathrm{SR}} A_{I_\mathrm{SR}}^\dagger$ is an orthogonal projection onto a subspace, we have $
0 < \bar{y}^\top A_{I_\mathrm{SR}} A_{I_\mathrm{SR}}^\dagger \bar{y}
\leq  \|\bar{y}\| \|A_{I_\mathrm{SR}} A_{I_\mathrm{SR}}^\dagger \bar{y}\| \leq \|\bar{y}\|^2 =1$.
This implies that $|1-\bar{y}^\top A_{I_\mathrm{SR}} A_{I_\mathrm{SR}}^\dagger \bar{y}|^{-1} > 1$, which shows that the Lipschitz bound for \srlasso{} is strictly larger than the one for the \lasso{}. 

Since we are comparing \emph{upper bounds}, strictly speaking we cannot conclude that the \emph{actual} Lipschitz constant of the \srlasso{} is larger than that of the \lasso{}. However, the fact that these two upper bounds arise from the application of analogous proof techniques suggests this to be a reasonable conjecture. This theoretical insight aligns with numerical evidence provided by \cref{fig:motivation-new} and the next subsection.

\subsection{Numerical Lipschitz comparison}
\label{sec:numerical-lipschitz-comparison}

Here, we numerically examine the solution sensitivity of \srlasso{} and
\lasso{}, complementing the theoretical observations of the previous
subsection. After providing implementation details in
\cref{sec:implementation-details}, we provide extended discussion and details
on~\cref{fig:motivation-sr-uc-comparison} in
\cref{sec:a-motivating-numerical-example} and then compare the two programs
in~\cref{sec:gamma-m-solution-sensitivity} in the context of varying measurement
size and noise scale.

\subsubsection{Implementation details}
\label{sec:implementation-details}

All numerical aspects, including solvers for \cref{eq:SR-LASSO}
and~\cref{eq:unconstrained-LASSO} were implemented in Python using \cvxpy{}
v1.2~\cite{agrawal2018rewriting, diamond2016cvxpy} with the \mosek{}
solver~\cite{mosek}. Default parameter settings were used. Some code and
extended discussion for the experiments in this work are available in
our~\href{https://github.com/asberk/srlasso_revolutions}{code
  repository}~\cite{coderepo}.

The elements of the experimental setup are as follows: the ground-truth signal
$x^\sharp \in \reals^{n}$ and measurements $b \in \reals^{m}$ are
given by
\begin{align}
\label{eq:CS_model}
  x^\sharp_{j} :=
  \begin{cases}
    m + W_{j}\sqrt m, & j \in [s],
    \\
    0 & j \in [n]\setminus [s]
  \end{cases}
  \AND b := A x^\sharp + \gamma w,
\end{align}
where $W_{j} \iid \mathcal{N}(0, 1)$, $A_{ij} \iid \mathcal{N}(0, m^{-1})$, and
$w_{i} \iid \mathcal{N}(0, 1)$ are all mutually independent. Here
$\mathcal{N}(\mu, \sigma^{2})$ denotes the normal distribution with mean
$\mu \in \reals$ and variance $\sigma^{2} > 0$ and, \eg{}
$w_{i} \iid \mathcal{N}(0, 1)$ means that the random vector $w$ has entries that
are indpendent identically distributed standard normal random variables. Above,
$s$ and $m$ are positive integers. In~\cref{fig:motivation-known} we set
$(m, n, s) = (50, 100, 5)$ and vary the noise scale $\gamma$;
in~\cref{fig:motivation-new,fig:motivation-new-continued} we set $(m, n, s, \gamma) = (100, 200, 5,
0.5)$. In~\cref{sec:gamma-m-solution-sensitivity} we fix the sparsity to $s = 7$
and vary the noise scale $\gamma$ and measurement size $m$.

Recall that we use SR and UC to refer to \srlasso{} and unconstrained \lasso{},
respectively. For $P \in \{\text{SR}, \text{UC}\}$ and $\Lambda \in 2\nats + 1$,
suppose that $(\lambda_{i}^{P})_{i \in [\Lambda]}$ is a grid of values for the
regularization parameter, logarithmically spaced about asymptotically
order-optimal parameter choices (\eg{} see~\cite[(6)]{belloni2011square}
and~\cite[Theorem~6.1]{bickel2009simultaneous}, respectively)
\begin{align*}
  \lambda_{*}^{\text{SR}} &:= 1.1 \cdot \Phi^{-1} \left( 1 - \frac{0.05}{2 n} \right) %
  & % 
    \lambda_{*}^{\text{UC}} &:= \sqrt{2 \log(n)},
\end{align*}
where $\Phi$ is the cdf of the normal distribution (refer to
\code{numpy.logspace} for details of the grid
generation~\cite{harris2020array}). Define
$\bar{x}^{P}(\lambda) = \bar{x}^{P}(A, b, \lambda) \in \reals^{n}$ to be a
solution to $P$ for given parameters $(A, b, \lambda)$. We use this notation to
refer to the numerical solutions computed in Python throughout our experiments,
and may safely overlook any issues related to non-uniqueness. For
$P \in \{ \text{SR}, \text{UC}\}$, define
\begin{align*}
  \bar \lambda_{\text{best}}^{P} &:= \argmin_{\lambda \in ( \lambda_{i}^{P})_{i \in [\Lambda]}} \|\bar{x}^{P}(\lambda) - x^{\sharp}\|,  %
  & %
    \bar{x}^{P}_{\text{best}} &:= \bar{x}^{P}(\bar \lambda_{\text{best}}^{P}), %
\end{align*}
and define the normalized parameters $\lambda_{\text{nmz}}$ by
$\lambda_{\text{nmz}}^{P} := \left( \lambda_{i}^{P} /
  \bar{\lambda}_{\text{best}}^{P} \right)_{i \in [\Lambda]}$. If we could be
referring to either program or if clear from context, then we may omit the
superscript. For example, we may simply refer to $\bar \lambda_{\text{best}}$,
rather than $\bar \lambda_{\text{best}}^{P}$, where $\bar \lambda_{\text{best}}$
could correspond to either program $P \in \{\text{SR}, \text{UC}\}$. Finally, we
refer to the quantity
$\|\bar{x}(\lambda) - \bar{x}(\bar{\lambda})\| / \|\bar{x}(\bar{\lambda})\|$ as
\emph{relative error} (viewed as a function of $\lambda$, with a fixed reference
value $\bar{\lambda}$).

\subsubsection{Robustness-sensitivity trade off for parameter tuning}
\label{sec:a-motivating-numerical-example}

In~\cref{fig:motivation-sr-uc-comparison} we presented two graphics that serve
to orient and motivate our work, in particular suggesting that there is a trade
off for parameter tuning between robustness and
sensitivity. In~\cref{fig:motivation-known} we demonstrated graphically the
known fact that $\lambda_{\text{best}}^{\text{UC}}$ depends on the noise scale
$\gamma > 0$, whereas $\lambda_{\text{best}}^{\text{SR}}$ is relatively robust
(\ie{} agnostic) to variation in the noise scale. Five independent trials were
performed for each program using parameter values $(m, n, s) = (50, 100,
5)$. Aspects of the experimental setup not already detailed in
\cref{sec:contributions}, including the definition of the sensing matrix
$A \in \reals^{m \times n}$, measurements $b \in \reals^{m}$ and ground-truth
signal $x^{\sharp}$, are detailed in~\cref{sec:implementation-details}.

\begin{figure}
    \centering
    \includegraphics[width=.7\linewidth]{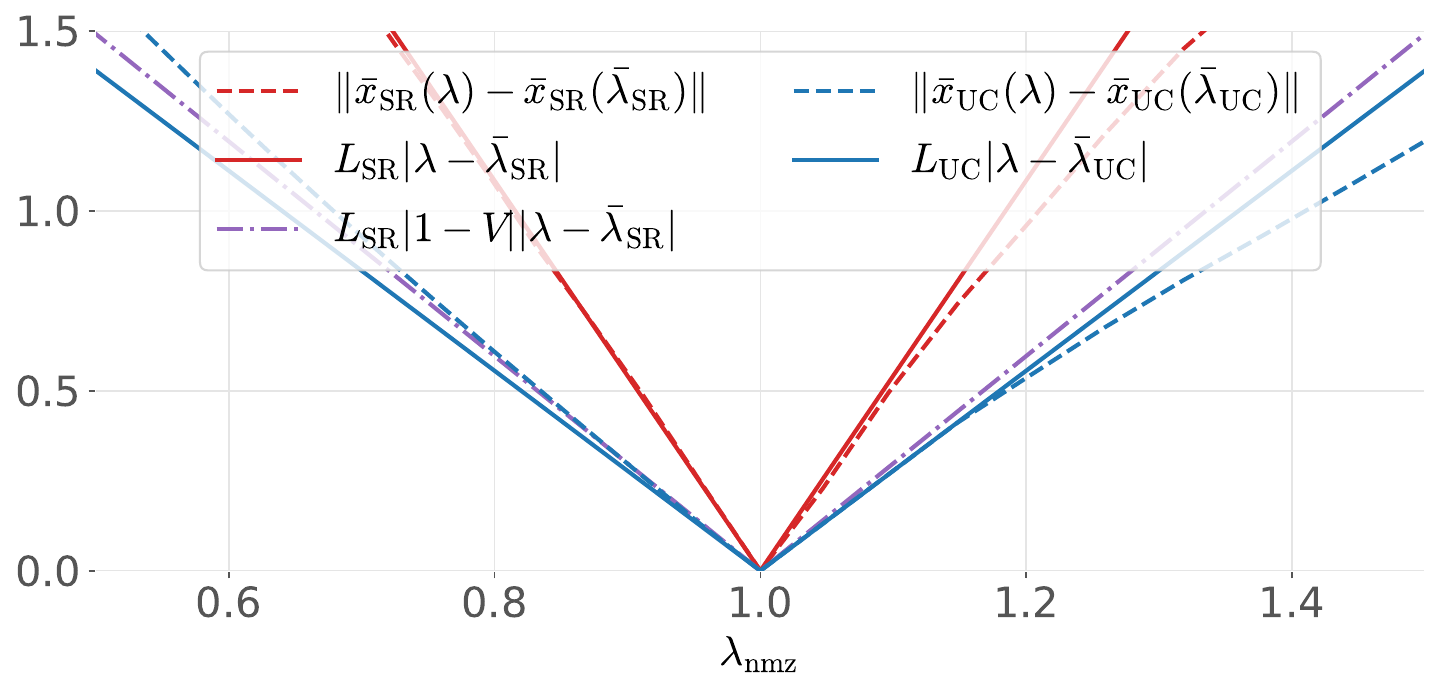}
    \caption{(Local) Lipschitz behavior for each program; $L_{\text{SR}}$ as
      in~\cref{eq:lipschitz_ub_lamda}, $L_{\text{UC}}$ as
      in~\cref{eq:Lipschitz_lambda_LASSO}. $V$ as in~\cref{cor:Lipschitz_lambda}
      gives
      $L_{\text{UC}} \approx L_{\text{SR}} |1-V|$. Note $\lambda_{\text{nmz}}$ is defined in \cref{sec:implementation-details}.\label{fig:motivation-new-continued}}
\end{figure}
On the other hand, in~\cref{fig:motivation-new-continued} we plot the empirical local
Lipschitz behavior of the \srlasso{} and \lasso{} solution maps --- namely,
$\| \bar{x}(\lambda) - \bar{x}(\bar{\lambda}) \|$. Here,
$\bar{\lambda} \approx \lambda_{\text{best}}$ and
$\lambda_{\text{nmz}} := \lambda / \bar{\lambda}$ so that the two programs can
be plotted about the same reference point on the horizontal
axis. $L_{\text{SR}}$, given in \cref{eq:lipschitz_ub_lamda}, corresponds to a
theoretical upper bound on the local Lipschitz constant for
$\bar{x}_{\text{SR}}(\lambda)$, established in \cref{cor:Lipschitz_lambda};
$L_{\text{UC}}$ to that for \lasso{}, obtained by the current authors in a
previous work (in particular, a tighter version
of~\cite[Theorem~4.13]{berk2023lasso}). Interestingly, there is a clear
connection to be drawn between the pair $L_{\text{SR}}$ and $L_{\text{UC}}$,
which is made precise in \cref{sec:theoretical-comparison}. There, and in
\cref{cor:Lipschitz_lambda} we define a quantity $V$ (appearing in the legend of
\cref{fig:motivation-new-continued}), which appears to serve as a good characterization
for how the two local Lipschitz constants differ, namely
$L_{\text{UC}} \approx L_{\text{SR}} |1 - V|$. Note that we chose
$\bar{\lambda}$ to correspond with a good estimate of the ground truth signal
$x^{\sharp}$, because this is perhaps the most interesting region of parameter
space in practice; however, the observations made here apply to a much larger
range of $\lambda$ values. The dimensional parameters for this plot are identical to those for \cref{fig:motivation-known,fig:motivation-new} and are given in \cref{sec:implementation-details}.

\subsubsection{Effect of noise scale and measurement size}
\label{sec:gamma-m-solution-sensitivity}

% For code file associated to these experiments see:
%     sr_uc_lasso_comparison_noisescale_cvxpy.py
% For data associated to these experiments see:
%     data/results_sr_noisescale_20230129-235335-628105.pkl
%     data/results_uc_noisescale_20230129-235335-628105.pkl
% Figures originally saved in:
%     fig/sr_uc_lasso_comparison_noisescale_cvxpy/
% as
%     sr_uc_lipschitzness_noisescale_dlamda_lip_rel_error_linlin_20230129-235335-628105.pdf
%     sr_uc_lipschitzness_noisescale_dlamda_lip_rel_error_symloglog_20230129-235335-628105.pdf
%     sr_uc_noisescale_lam_nmz_rel_error_20230129-235335-628105.pdf
%     sr_uc_noisescale_lamda_rel_error_20230129-235335-628105.pdf

We next examine empirically the effect of noise scale and measurement size
$(\gamma, m)$ on the solution sensitivity between~\cref{eq:SR-LASSO}
and~\cref{eq:unconstrained-LASSO} in~\cref{fig:noisescale-experiment}.
\begin{figure}[ht!]
  \centering
  \begin{subfigure}{0.95\linewidth}
    \centering
    \includegraphics[width=\linewidth]{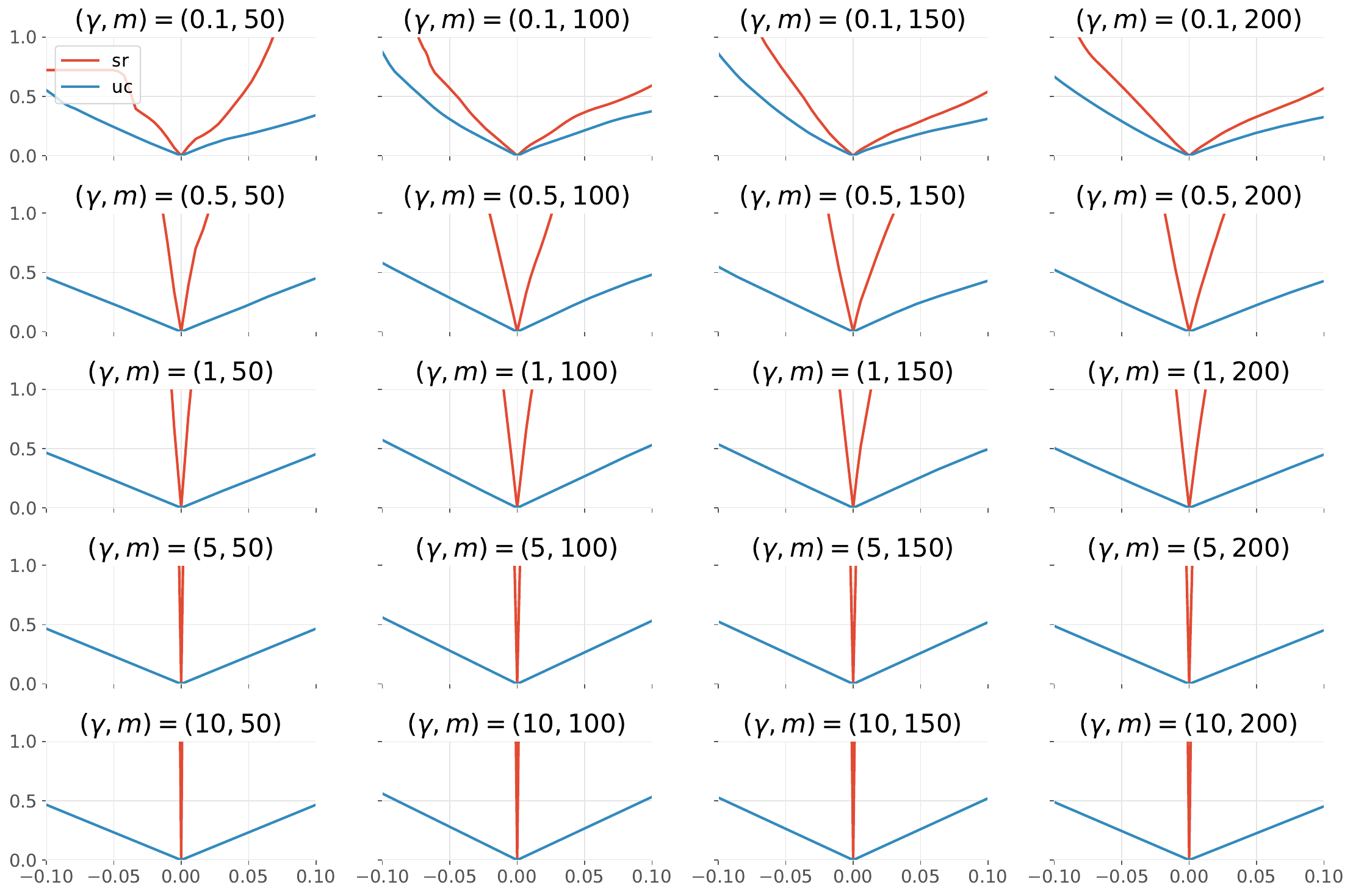}
\caption{Lipschitzness: $\lambda - \bar\lambda_{\text{best}}$ \emph{vs}.\
  $\| \bar{x}(\lambda) - \bar{x}(\bar\lambda_{\text{best}})\|$.\label{fig:noisescale-effect-on-lipschitzness}}
  \end{subfigure}
  
  \begin{subfigure}{0.95\linewidth}
    \centering
    \includegraphics[width=\textwidth]{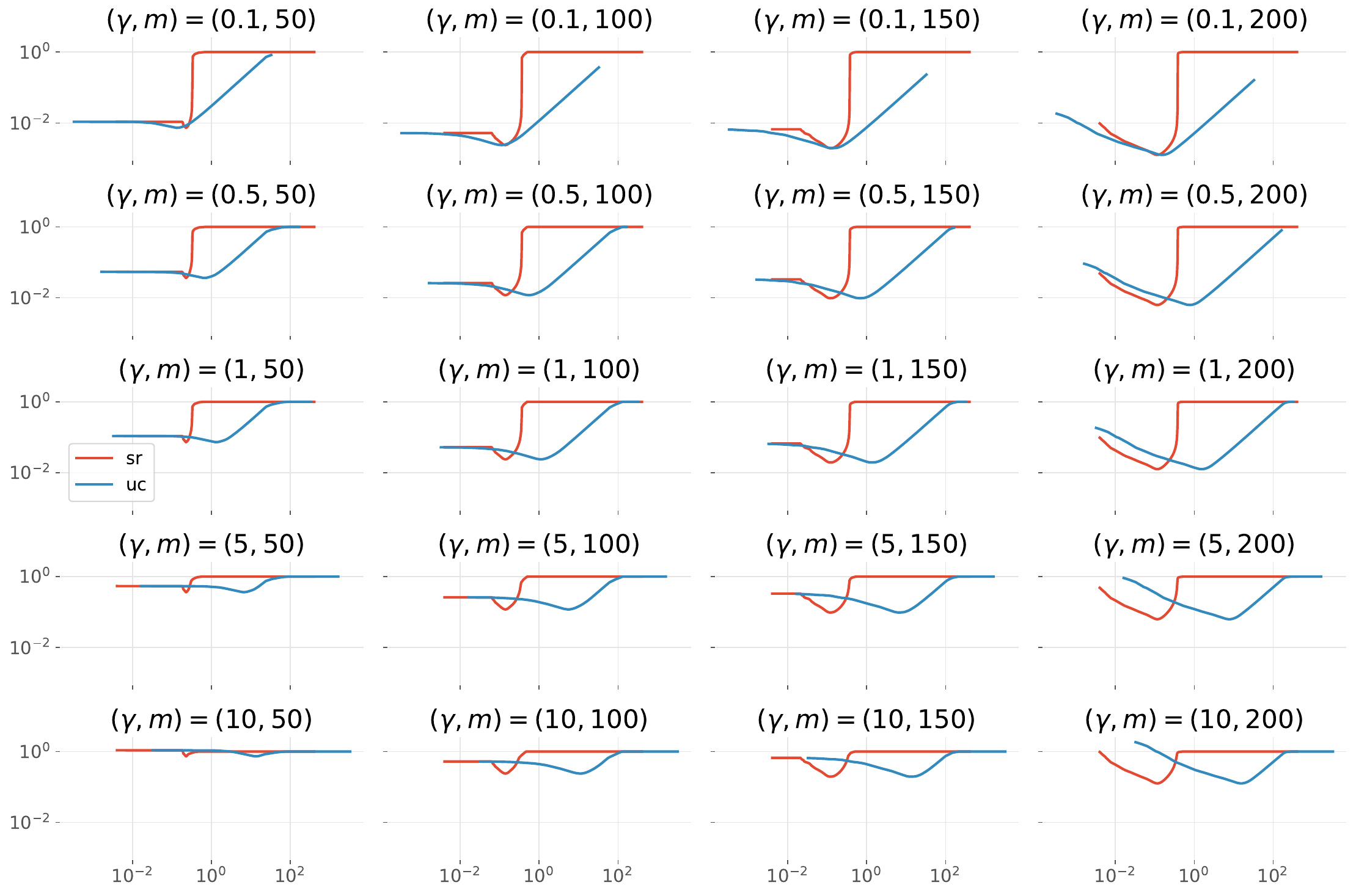}
    \caption{Relative error as a function of $\lambda$
      with log-log axis scaling.\label{fig:parameter-sensitivity-by-noisescale}}
  \end{subfigure}
  \caption{Effect of noise scale on error sensitivity for~\cref{eq:SR-LASSO}
    (sr) and \cref{eq:unconstrained-LASSO} (uc) faceted by
    $(\gamma, m) \in \{0.1, 0.5, 1, 5, 10\} \times \{50, 100, 150, 200\}$ with
    $(s, n) = (7, 200)$.}
  \label{fig:noisescale-experiment}
\end{figure}
In particular, we first investigate empirical Lipschitz behavior of the solution
function for~\cref{eq:SR-LASSO}
(see~\cref{fig:noisescale-effect-on-lipschitzness}). Again, as a reference we
compare with that for~\cref{eq:unconstrained-LASSO}. In addition, we numerically
examine the parameter sensitivity of the relative error for
both~\cref{eq:SR-LASSO} and~\cref{eq:unconstrained-LASSO}
(see~\cref{fig:parameter-sensitivity-by-noisescale}). In both cases, this is
done about the empirically optimal parameter values
$\bar{\lambda} = \bar\lambda_{\text{best}}$. Below, we set $\Lambda := 501$. The
logarithmically spaced grid $(\lambda_{i} / \lambda_{*})$ ranges from $10^{-3}$
to $100$ (and includes the point $1$). In this experiment we fix
$(s, n) = (7, 200)$ and use $\gamma \in \{0.1, 0.5, 1, 5, 10\}$,
$m \in \{50, 100, 150, 200\}$. Plotted results are depicted in $5 \times 4$
grids with each grid cell corresponding to a $(\gamma, m)$ pair.

We readily observe from~\cref{fig:noisescale-effect-on-lipschitzness} that for
any selected pairing of $(\gamma, m)$, the empirical Lipschitzness of
\cref{eq:SR-LASSO} is worse than that of
\cref{eq:unconstrained-LASSO}. Interestingly, we observe that increasing noise
scale tends to worsen the empirical Lipschitz behavior of \cref{eq:SR-LASSO},
while it remains (locally) similar for \cref{eq:unconstrained-LASSO} about the
selected reference value. 

In~\cref{fig:parameter-sensitivity-by-noisescale} we compare the relative errors
of each program as a function of $\lambda$. We observe
in~\cref{fig:parameter-sensitivity-by-noisescale} that, for $m$ fixed,
$\bar{\lambda}_{\text{best}}^{\text{SR}}$ is generally less sensitive to
variation in $\gamma$ than is $\bar{\lambda}_{\text{best}}^{\text{UC}}$. This
observation is consistent with the ``tuning robustness'' property characteristic
of~\cref{eq:SR-LASSO}
(\emph{cf.}~\cref{fig:motivation-known}). From~\cref{fig:noisescale-experiment},
we observe for all choices of $(\gamma, m)$ that~\cref{eq:SR-LASSO} is more
sensitive to its parameter choice than~\cref{eq:unconstrained-LASSO}, again
consistent with a comparison of the Lipschitz upper bounds
(\emph{cf.}~\cref{sec:theoretical-comparison}).

\section{Numerical investigation of our \srlasso{} theory}
\label{sec:numerics}

We present numerical simulations supporting the theoretical results of the
previous sections pertaining to solution uniqueness and local Lipschitz
moduli. Specifically, we examine the satisfiability of~\cref{ass:weak-1}
in~\cref{sec:numerics-uniqueness-sufficiency} with a graphical demonstration
of~\cref{th:sufficient-condition-for-uniqueness}, visualizing for a given set of
parameters when \cref{th:sufficient-condition-for-uniqueness}(ii) holds as a
function of $\lambda$. We investigate the tightness of the Lipschitz
bound~\cref{eq:lipschitz_ub_lamda} under~\cref{ass:Strong-1}
in~\cref{sec:empir-invest-lipsch-ub}. Refer to~\cref{sec:implementation-details}
for an overview of implementation details and relevant notation. For greater
detail beyond this, refer to our
\href{https://github.com/asberk/srlasso_revolutions}{code
  repository}~\cite{coderepo}.

\subsection{Empirical investigation of uniqueness sufficiency}
\label{sec:numerics-uniqueness-sufficiency}

We begin with an empirical investigation of when the sufficient conditions for
uniqueness hold, serving to establish an intuitive understanding of the
behavior underlying~\cref{th:sufficient-condition-for-uniqueness}. To this end, 
we fix a dual pair $(\bar x, \bar y)$ for~\cref{eq:SR-LASSO} (\ie{} $\bar{x}$
solves~\cref{eq:SR-LASSO} and $\bar{y}$ solves~\cref{eq:srlasso-dual};
see~\cref{prop:FR}) and numerically solve the convex program
\begin{align}
  \label{eq:auxiliary-problem}
  \min_{z \in \reals^{m}} \|A_{I^{C}}^{\top} (\bar y + z)\|_{\infty}  %
  \text{\quad s.t.\quad} %
     [A_I \; \bar y]^\top z = 0.
\end{align}
We denote the optimal value of the program by $Z^{*}$ and any solution to the
program by $\bar z$. Then, \cref{ass:weak}(ii) is satisfied if
$Z^{*} < \lambda$. We visualize $Z^{*}$ as a function of $\lambda$
in~\cref{fig:uniqueness-sufficiency-simple}
\begin{figure}[t]
  \centering
  \includegraphics[width=.8\textwidth]{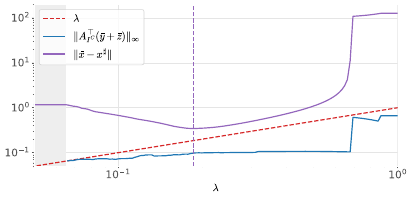}
  \caption{Visualizing uniqueness sufficiency for
    $(m, n, s, \gamma) = (100, 200, 2, 0.1)$. Upper solid line: error
    $\|\bar x(\lambda) - x^\sharp\|$ where $\bar x(\lambda)$
    solves~\cref{eq:SR-LASSO}. Lower solid line: empirical version
    of~\cref{ass:weak}(ii) that partially suffices for uniqueness,
    $\|A_{I^{C}}^{\top}\bar y\|_{\infty}$, where $\bar y$
    solves~\cref{eq:srlasso-dual} and $\bar z$
    solves~\cref{eq:auxiliary-problem}. Grey shaded vertical rectangles
    correspond with $\lambda$ for which $Z^{*} = \infty$. Diagonal dashed line
    $y = \lambda$ serves as reference for lower solid line. Horizontal position
    of vertical dashed line denotes
    $\bar{\lambda}^{\text{SR}}_{\text{best}}$.\label{fig:uniqueness-sufficiency-simple}}
\end{figure}
by plotting $Z^{*} = Z^{*}(\lambda)$ and the diagonal line ``$y =
\lambda$''. The former line is given by the lower solid line, the latter by the
diagonal dashed line. The upper solid line corresponds to the error
$\|\bar x(\lambda) - x^\sharp\|$. The horizontal position of the vertical
dashed line indicates $\bar{\lambda}_{\text{best}}^{\text{SR}}$ The plot is
shown on a log-log scale.  Above, the numerical solution for $Z^{*}$ was
computed using \cvxpy{} v1.2~\cite{agrawal2018rewriting,diamond2016cvxpy} with
the \mosek{} solver~\cite{mosek}. Values of $\lambda$ for which $Z^{*} = \infty$
are shown as grey shaded vertical rectangles. The relative error between the
primal~\cref{eq:SR-LASSO} and dual~\cref{eq:srlasso-dual} optimal values was
$8.88\times 10^{-7}$, meaning that the two are comfortably within numerical
tolerance, given the optimization parameter settings.

In addition, we present a heatmap in~\cref{fig:uniqueness-sufficiency-heatmap}
\begin{figure}[t]
  \centering
  \includegraphics[width=\textwidth]{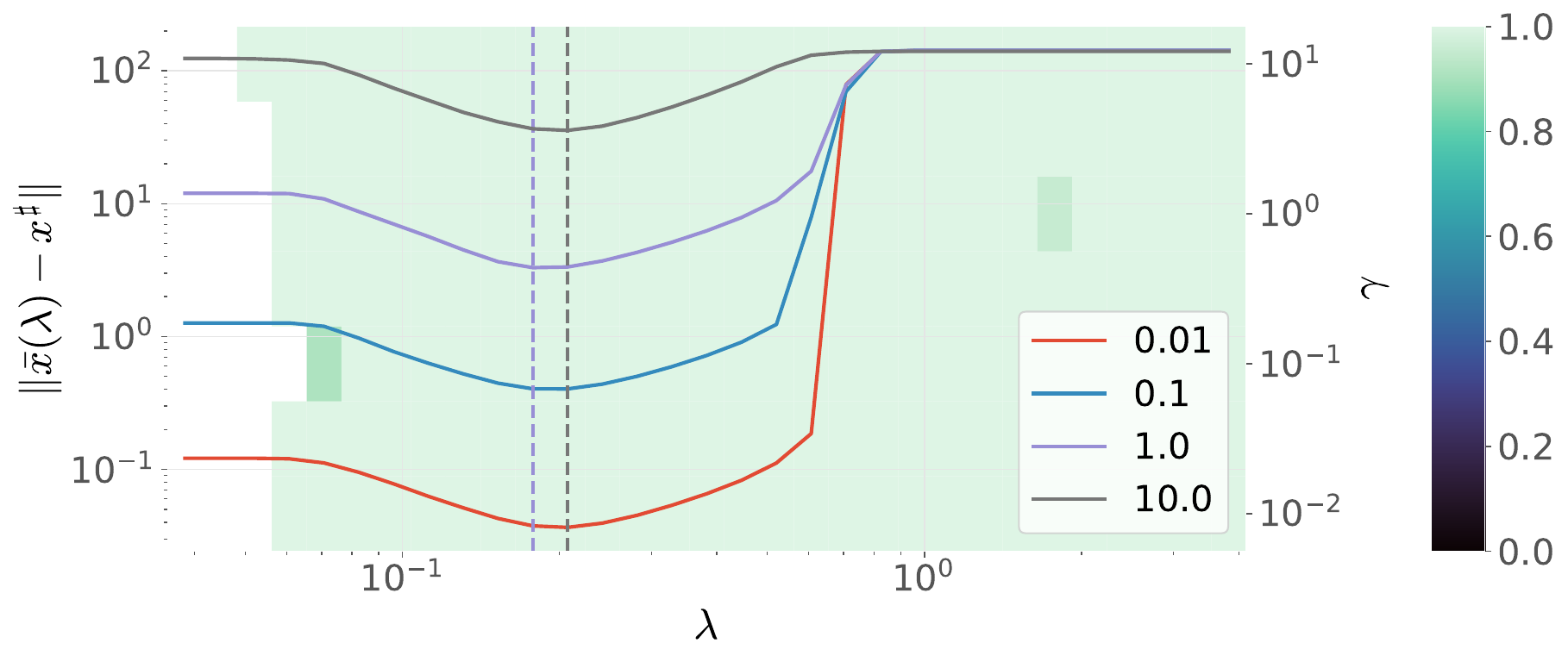}
  \caption{Background: uniqueness sufficiency heatmap displaying the proportion
    of $20$ independent trials for which
    $\|A_{I^{C}}^{\top} (\bar y + \bar z)\|_{\infty} < \lambda$ is satisfied,
    where $\bar y$ solves~\cref{eq:srlasso-dual} and $\bar z$
    solves~\cref{eq:auxiliary-problem}. Voxels correspond to $\lambda$
    (horizontal axis) and noise scale $\gamma$ (right axis). White regions: no
    data. Foreground: error $\|\bar x(\lambda) - x^\sharp\|$ (left axis) as
    a function of $\lambda$ for four choices of the noise scale $\gamma$ (see
    legend). Vertical dashed lines are drawn at $\bar\lambda_{\text{best}}$ for each. }
  \label{fig:uniqueness-sufficiency-heatmap}
  % Figure generated using:
  %     check_srlasso_uniqueness_conditions.py::plot_error_uniqueness_sufficiency
\end{figure}
demonstrating the relative frequency that the sufficient condition for
uniqueness is satisfied for a range of $31 \times 7$ logarithmically spaced
parameter values $(\lambda, \gamma) \in [0.1, 10] \times [0.01, 10]$ (horizontal
and right axis, respectively). Each pixel displays a mean of $20$ independent
repetitions with $1$ corresponding to the sufficient condition being satisfied
for all trials; $0$ corresponding to the condition being satisfied for none of
the trials. Apart from the changing noise scale $\gamma$, the signal/measurement
model is the same as described above. We also compute
$Z^{*} = Z^{*}(i, \gamma, \lambda)$ as described above, where $i \in [20]$ is
the trial number. White regions in the heatmap correspond none of the $20$
trials yielding an inexact solution, violating a tenet of our theory that
$A \bar{x} \neq b$. Superposed on the heatmap is a plot of the recovery error (left vertical axis) as
a function of $\lambda$ for $4$ of the values of $\gamma$ (see
legend). \cref{fig:uniqueness-sufficiency-heatmap} reveals a
sizable region where the sufficient condition for uniqueness is
empirically assured. Moreover, this region encompasses all
$\bar{\lambda}_{\text{best}}$ values with a comfortable margin, and it is relatively insensitive to $\gamma$.

\subsection{Empirical investigation of Lipschitz upper bound}
\label{sec:empir-invest-lipsch-ub}

% For code file associated to these experiments see:
%     lipschitzness_srlasso.py
% Figures originally saved in:
%     fig/srlasso_lipschitzness/
% as
%     srlasso_lipschitzness_m_50_100_150_200_gamma_0.1_0.5_1_5_10.pdf
%     srlasso_lipschitzness_m_50_100_150_200_s_3_7_15.pdf

Finally, we compare the Lipschitz upper bound~\cref{eq:lipschitz_ub_lamda} to
the empirical Lipschitz quantity
$\| \bar{x}(\lambda) - \bar{x}(\bar{\lambda}) \|$ where
$\bar{\lambda} := \bar{\lambda}_{\text{best}}$. To this end, we investigate two
settings where the dimensional parameters are varied, with results displayed
in~\cref{fig:empirical-lipschitz-bound}.
\begin{figure}
  \centering
  \begin{subfigure}[c]{.95\linewidth}
    \centering
    \includegraphics[width=\linewidth]{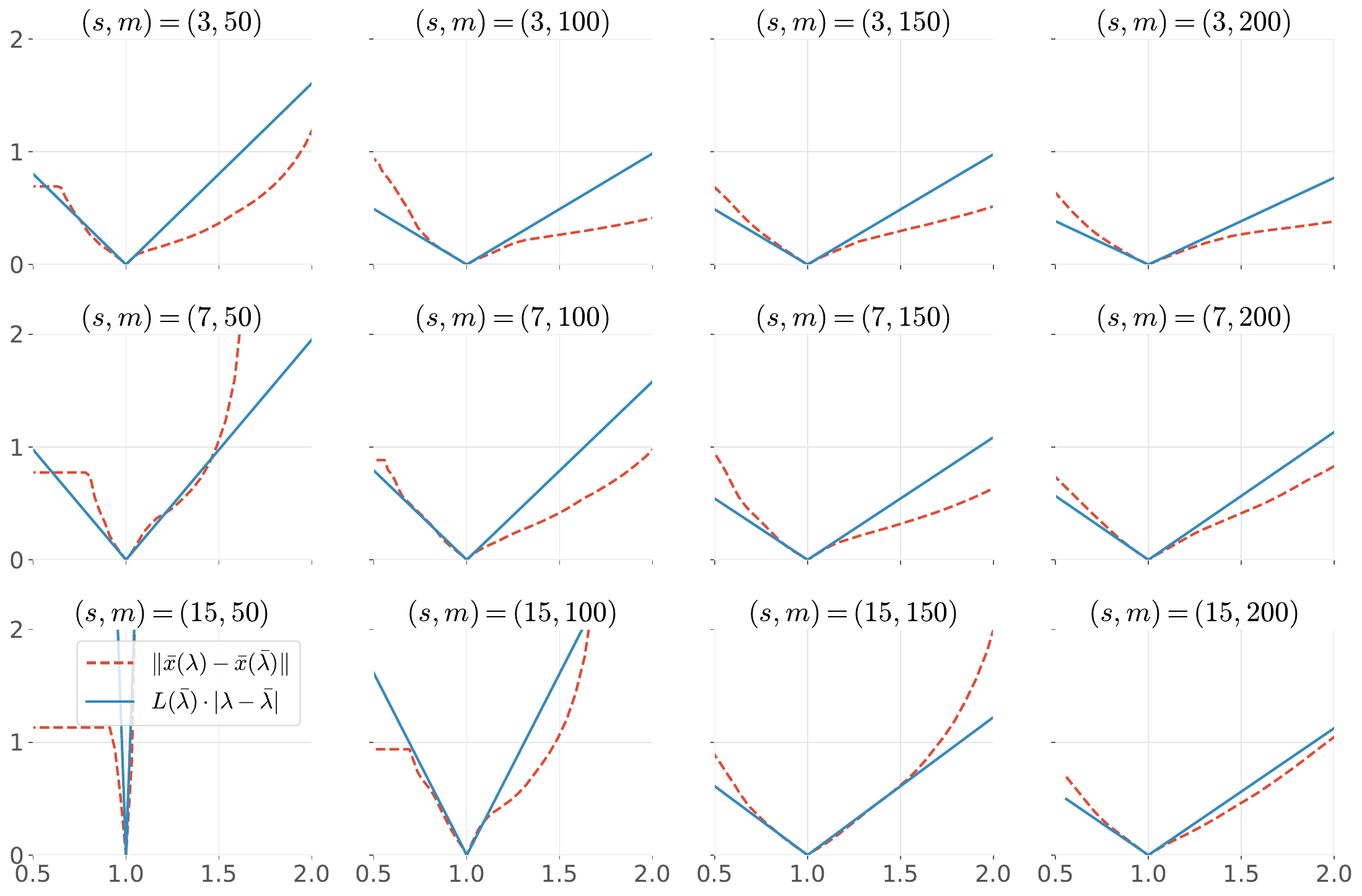}
    \caption{Variation in $(m, s)$.\label{fig:empirical-lipschitz-bound-ms}}
  \end{subfigure}

  \begin{subfigure}[c]{.95\linewidth}
    \centering
    \includegraphics[width=\linewidth]{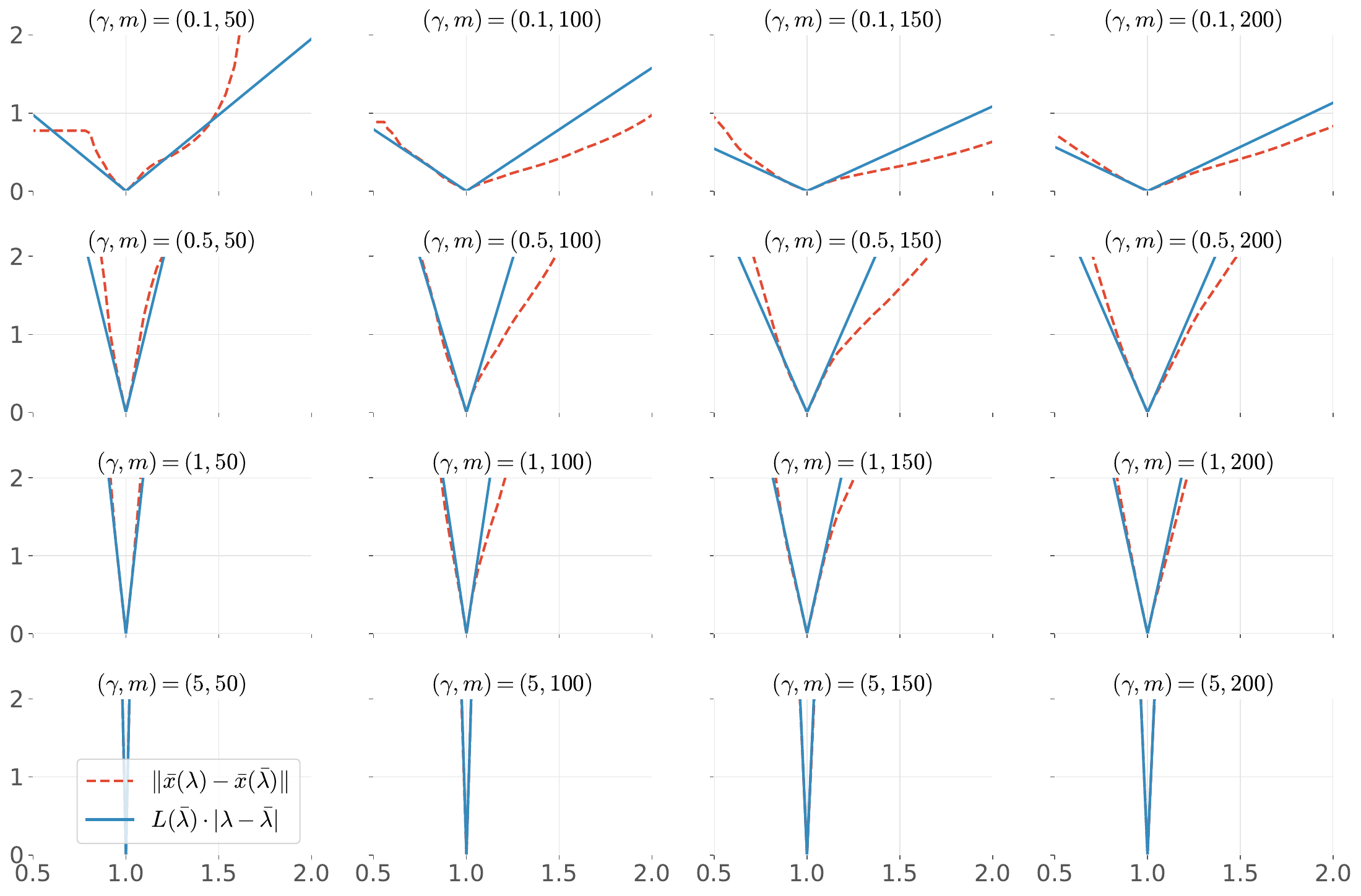}
    \caption{Variation in $(m, \gamma)$.\label{fig:empirical-lipschitz-bound-mg}}
  \end{subfigure}
  \caption{Effect of varying dimensional parameters on the Lipschitz upper
    bound: $L(\bar{\lambda}) |\lambda - \bar{\lambda}|$ \emph{vs}.\ the
    empirical Lipschitz quantity
    $\| \bar{x}(\lambda) - \bar{x}(\bar{\lambda}) \|$ as a function of
    $\lambda_{\text{nmz}}$. $L(\bar{\lambda})$ is computed as
    in~\cref{eq:lipschitz_ub_lamda}. \textbf{Top}:
    $(m,s) \in \{50, 100, 150, 200\} \times \{3, 7, 15\}$; \textbf{Bottom}:
    $(m, \gamma) \in \{50, 100, 150, 200\} \times \{0.1, 0.5, 1,
    5\}$. \label{fig:empirical-lipschitz-bound}}
\end{figure}
Throughout, we choose $n = 200$. In the first experiment, shown
in~\cref{fig:empirical-lipschitz-bound-ms}, we set $\gamma = 0.1$ and choose
$(m,s) \in \{50, 100, 150, 200\} \times \{3, 7, 15\}$; in the second, shown
in~\cref{fig:empirical-lipschitz-bound-mg}, we set $s = 7$ and choose
$(m,\gamma) \in \{50, 100, 150, 200\} \times \{0.1, 0.5, 1, 5\}$. Generally, we
observe that the Lipschitz upper bound $L(\bar{\lambda})$ given
by~\cref{eq:lipschitz_ub_lamda} is a tight local approximation to the true
Lipschitz behavior of the solution $\bar{x}(\lambda)$ about $\bar{\lambda}$.

\section{Conclusion}
\label{sec:conclusion} 

In this paper, we studied the Square Root LASSO (\srlasso{}) \cref{eq:SR-LASSO}.
We established sufficient conditions for its well-posedness, namely
\cref{ass:weak}, and linked it to two stronger regularity conditions, the
intermediate condition \cref{ass:intermediate} and the strong condition
\cref{ass:Strong}, respectively. The intermediate condition is shown to imply
local Lipschitzness and directional differentiability of the solution map as a
function of the right-hand side and the tuning parameter (around a reference
point); the strong condition, in turn, guarantees continuous differentiability
of said solution map. We then leveraged these results to compare the \srlasso{}
to its close relative, the (unconstrained) \lasso{} from a theoretical
perspective. This comparison suggests that the celebrated robustness of optimal
parameter tuning to noise of the SR-LASSO comes at the price of elevated
sensitivity of the solution map to the tuning parameter itself. Our numerical
experiments confirmed the presence of this robustness-sensitivity trade off for
parameter tuning, and illustrated the sharpness of our Lipschitz bounds and the
validity of the main assumptions upon which the theory relies on.

We conclude by discussing possible extensions of this work and open problems:
although we focused on the dependence of the solution map in $b$ and $\lambda$,
we point out that it is straightforward (at the cost of more computational
overhead) to extend all stability results to the case where also the design
matrix $A$ is a parameter.  An obvious question is whether the Lipschitz stability results presented in \cite{berk2023lasso} and here can be extended beyond the LASSO and SR-LASSO framework, respectively. If one is merely interested in a qualitative result, the answer is clearly positive. Several papers in the literature address this for certain regularizers and $\ell_2$-fidelity term, see, e.g., \cite{bolte2024differentiating} and \cite{nghia2024geometric}. On the other hand, getting quantitative  results, i.e., explicit local Lipschitz constants, is a topic of ongoing research.

Moreover, similarly to~\cite{berk2023lasso}, our
results could be explicitly applied to compressed sensing theory by combining
our Lipschitz bounds with explicit estimates of the sparsity of SR-LASSO
solutions~\cite{foucart2023sparsity}. 
Our numerical experiments in \cref{sec:numerics-uniqueness-sufficiency} show that \cref{ass:weak} is satisfied with high probability in the case of random Gaussian sampling for a wide range of $\lambda$'s (including the optimal one) and for various levels of noise corrupting the measurements. However, theoretically showing the validity of \cref{ass:weak} for suitable classes of sampling matrices remains an open problem. In addition, whether the standing assumption that $b\notin \rge A_I$ is necessary throughout is, 
at this point, unclear, but the variational-analytic approach builds fundamentally on a sum rule for the graphical derivative and coderivative that would not be applicable in this form without the made assumption. Finally, in the LASSO case it is
known~\cite{zhang2015necessary} that the analogous sufficient condition to
\cref{ass:weak} is also necessary for uniqueness. At this point, whether this is
also true for the SR-LASSO, is an open question and we challenge the reader to
clarify it.

\appendix 

\section{Proof of shrinking property}
\label{app:Shrinking}

\noindent
Here, we furnish proof of~\cref{lem:shrinking-property}.  Note that, for $f, g : \reals^{n} \to \rbar$, their infimal convolution~\cite{rockafellar1970convex} is denoted by $(f \mathbin{\hash} g)(x) := \inf_{u} f(x - u) + g(u)$.

\begin{proof}[{Proof of~\cref{lem:shrinking-property}}]
  The (primal) problem defining $p^{*}$ reads
  \begin{align}
    \label{eq:fix--primal}
    \min_{z \in \reals^{m}} \| B^{\top}(\bar{y} + z) \|_{\infty} + \delta_{\varepsilon \mathbb{B} \cap \mathcal{T}} (z). 
  \end{align}
  The minimum is attained due to compactness and lower semicontinuity. Now, set
  $ f := \delta_{\varepsilon \mathbb{B}\cap \mathcal{T}}$ and
  $ g := \| B^{\top}\bar{y} + (\cdot) \|_{\infty}.$ With
  $\rmd_{\mathcal T^\perp}$, the Euclidean distance to $\cT^\perp$, we find
  \begin{align*}
    f^{*}(u) %
    &= \delta_{\varepsilon \mathbb{B} \cap \mathcal{T}}^{*}(u) %
    \\
    & = (\delta_{\varepsilon \mathbb{B}} + \delta_{\mathcal{T}})^{*}(u) %
    \\
    & = (\sigma_{\varepsilon \mathbb{B}} \mathbin{\hash} \sigma_{\mathcal{T}})(u) %
    \\
    & = \inf_{y} \varepsilon \| y \| + \delta_{\mathcal{T}^{\perp}}(u - y) %
    \\
    & = \varepsilon \rmd_{\mathcal{T}^{\perp}}(u),
  \end{align*}
  where in the third line, we have used~\cite[Theorem 16.4]{rockafellar1970convex} combined with the fac that 
  $0 \in (\inter \varepsilon \mathbb{B})\cap\mathcal{T}$. We also have
  $g^{*}(u) = \delta_{\mathbb{B}_{1}}(u) - \ip{B^{\top}\bar{y}}{u}$. Hence, with
  $\phi(u) := - \ip{B^{\top}\bar{y}}{u} - \varepsilon
  \rmd_{\mathcal{T}^{\perp}}(Bu)$, the (Fenchel-Rockafellar) dual problem of~\cref{eq:fix--primal} is
  \begin{align}
    \label{eq:fix--dual}
    \max - f^{*}(B u) - g^{*}(-u) %
    \qquad \iff \qquad %
    \max_{u \in \mathbb{B}_{1}} \phi(u).
  \end{align}
  Now, observe that (see \eg{} \cite{horn2012matrix})
  $
    -\ip{B^{\top} \bar{y}}{u} \leq \| B^{\top} \bar{y} \|_{\infty} \| u \|_{1}.
  $
  Hence, for every feasible point of~\cref{eq:fix--dual}, we have
  \begin{align}
    \label{eq:fix--observation-1}
    \phi(u) \leq \| B^{\top}\bar{y} \|_{\infty} - \varepsilon \rmd_{\mathcal{T}^{\perp}}(Bu).
  \end{align}
  We claim that $\phi(u) < \| B^{\top}\bar{y} \|_{\infty}$ for all
  $u \in \mathbb{B}_{1}$. Indeed, assume to the contrary the existence of a
  $\hat u$ feasible for \cref{eq:fix--dual} such that
  $\phi(\hat u) = \| B^{\top} \bar{y} \|_{\infty}$. Then
  \cref{eq:fix--observation-1} implies
  \begin{align*}
    \rmd_{\mathcal{T}^{\perp}}(B\hat u) = 0 %
    \qquad &\iff \qquad %
    B\hat u \in \mathcal{T}^{\perp} = \reals \cdot \{\bar{y}\} + \rge C %
    \\
           & \implies \qquad %
    \bar{y} \in \rge C + \rge B = \rge [B\; C],
  \end{align*}
  contradicting an assumption of the lemma. Consequently, since the dual problem
  admits a solution, $\bar{u}$ say, we find by strong duality that
$p^{*} = \phi(\bar{u}) < \| B^{\top}\bar{y} \|_{\infty}.$ 
\end{proof}

\section{Proof of analytic solution formula under~\cref{ass:intermediate}}
\label{app:Explicit}

\noindent
Here, we provide the proof for the analytic expression for the (unique) solution
under the intermediate condition from~\cref{ass:intermediate}.

\begin{proof}[Proof of~\cref{prop:Explicit}]
  By assumption, the dual problem~\cref{eq:srlasso-dual} has a unique solution
  $ \bar{y} = \frac{b - A\bar{x}}{\| A\bar{x} - b \|}.$ Therefore, using the
  optimality conditions for $\bar{x}$, there exists a unique subgradient
  $v \in \partial \| \cdot \|_{1}(\bar{x})$ such that
  $A^{\top} \bar{y} = \lambda v$. By definition of $v$ and the fact that
  $I \subseteq J$, we have
  $\| \bar{x} \|_{1} = \ip{v}{\bar{x}} = \ip{v_{J}}{\bar{x}_{J}}$.  We thus
  rewrite the strong duality expression~\cref{prop:FR}(b)(ii) as
  \begin{align}
    \label{eq:intermed-1}
    \| A\bar{x} - b \| = \ip{b}{\bar{y}} - \lambda \ip{v_{J}}{\bar{x}_{J}}. 
  \end{align}
  Using~\cref{cor:Invar}(b), we can rewrite the optimality conditions as
  $A^{\top} (b - A\bar{x}) = \lambda \| A\bar{x} - b \| v$.  Restricting to $J$ and using~\cref{eq:intermed-1} gives
 \[
    A_{J}^{\top} (b - A_{J}\bar{x}_{J}) %
    = \lambda \left( \ip{b}{\bar{y}} - \lambda \ip{v_{J}}{\bar{x}_{J}} \right)v_{J} %
       = \lambda \ip{b}{\bar{y}} v_{J} - \lambda^{2} v_{J}v_{J}^{\top} \bar{x}_{J}. 
\]
  After rearranging, we obtain
  $\left( A_{J}^{\top} A_{J} - \lambda^{2} v_{J}v_{J}^{\top} \right) \bar{x}_{J} %
    = A_{J}^{\top} b - \lambda \ip{b}{\bar{y}} v_{J}$.
  It remains to verify that the matrix
  $A_{J}^{\top} A_{J} - \lambda^{2} v_{J}v_{J}^{\top}$ satisfies
  the desired identity and is invertible. To this end, the $J$-restricted
  optimality conditions $A_{J}^{\top} \bar{y} = \lambda v_{J}$ imply
  \begin{align*}
    A_{J}^{\top} A_{J} - \lambda^{2} v_{J}v_{J}^{\top} %
    = A_{J}^{\top} \left( \identity - \bar{y}\bar{y}^{\top} \right) A_{J}. 
  \end{align*}
  To obtain invertibility of this matrix, we apply \cref{lem:SM}, using that
  $A_{J}$ has full column rank and $\bar{y} \notin \rge A_{J}$ (because
  $b \notin \rge A_{J}$). In particular,
  \begin{align*}
    \bar{x}_{J} %
    = B \left( A_{J}^{\top} b - \lambda \ip{b}{\bar{y}} v_{J} \right), %
    \qquad%
    B := \left[ A_{J}^{\top}(\identity - \bar{y}\bar{y}^{\top}) A_{J} \right]^{-1}.
  \end{align*}
  An explicit expression for $B$ is provided by~\cref{lem:SM}. Notice that
  uniqueness of $\bar{y}$ implies that of $v$ and $J$, and hence too of
  $x_{J}$. Finally, $x_{J^{C}} \equiv 0$ because $I \subseteq J$. Hence, \[\bar{x} %
   = L_{J} \left( B \left( A_{J}^{\top} b - \ip{b}{\bar{y}} A_{J}^{\top} \bar{y} \right) \right)
       = L_{J} \left( B A_{J}^{\top} (\identity - \bar{y} \bar{y}^{\top}) b \right).
\]
\end{proof}

\section{Auxiliary results}
\label{sec:auxiliary-results}

\begin{lemma}
  \label{lem:phi}
  Let $\bar A\in \R^{m\times n}$, $\bar b\in \R^m$, $\bar x\in \R^n$ such that
  $\bar A\bar x\neq \bar b$ and let $\bar \lambda > 0$. Then, there exists a
  neighborhood $U$ of $(\bar A, \bar b,\bar \lambda, \bar x)$ such that the
  function $\phi:U\to \R^n$,
  \[
    \phi(A,b,\lambda,x) := \frac{A^{\top}(Ax-b)}{\lambda\|Ax-b\|},
  \]
  is well defined and continuously differentiable on $U$ with partial
  derivatives:
  \begin{itemize}
  \item[(a)] $D_A \phi(A,b,\lambda, x)(W)=\frac{1}{\lambda\|Ax-b\|}\left[(A^{\top}W+W^{\top}A)x-W^{\top}b-\frac{A^{\top}(Ax-b)(Ax-b)^{\top}Wx}{\|Ax-b\|^2} \right]$;
  \item[(b)] $D_b \phi(A,b,\lambda,x)=-\frac{1}{\lambda\|Ax-b\|}\left[ \frac{A^{\top}-A^{\top}(Ax-b)(Ax-b)^{\top}}{\|Ax-b\|^2}\right]$;
  \item[(c)] $D_\lambda \phi(A,b,\lambda,x)=-\frac{1}{\lambda^2 \|Ax-b\|}A^{\top}(Ax-b)$;
  \item[(d)] $D_x  \phi(A,b,\lambda,x)=\frac{1}{\lambda\|Ax-b\|}\left[A^{\top}A-A^{\top}\frac{Ax-b}{\|Ax-b\|}\frac{(Ax-b)^{\top}}{\|Ax-b\|}A\right]$.
  \end{itemize}
\end{lemma}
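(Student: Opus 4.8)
The plan is to treat $\phi$ as a composition of smooth maps and differentiate via the product and chain rules; the only genuine subtlety is the $A$-dependence, since $A$ enters in three distinct places. First I would establish well-definedness and $C^1$-regularity: because $\bar A\bar x\neq\bar b$, the continuous map $(A,b,x)\mapsto Ax-b$ is nonzero at the reference point, hence on some neighborhood $U$; shrinking $U$ so that $\lambda$ stays positive, the scalar $\lambda\|Ax-b\|$ is smooth and nonvanishing there, so $\phi$, being a quotient of $C^\infty$ maps, is continuously differentiable on $U$.

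Next I would abbreviate $r:=Ax-b$ and record the elementary derivatives feeding the chain rule: the derivative of $r$ in a perturbation direction is $-h$ (for $b$), $Ah$ (for $x$), and $Wx$ (for $A$, with $W\in\R^{m\times n}$); the gradient identity $\nabla_r\|r\|=r/\|r\|$ gives $D(\|r\|^{-1})\,\delta r=-\|r\|^{-3}r^\top(\delta r)$; and $D_A(A^\top)W=W^\top$. Writing $\phi=\tfrac1\lambda A^\top r\,\|r\|^{-1}$ and applying the product rule then yields (b), (c), (d) by direct substitution. For $\lambda$, only the scalar prefactor moves, giving (c) immediately. For $b$, substituting $\delta r=-h$ produces $-\tfrac{1}{\lambda\|r\|}[A^\top-\tfrac{A^\top r r^\top}{\|r\|^2}]$, matching (b). For $x$, replacing $-h$ by $Ah$ produces $\tfrac{1}{\lambda\|r\|}[A^\top A-\tfrac{A^\top r r^\top A}{\|r\|^2}]$, matching (d).

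The main obstacle is part (a), because $A$ appears in the prefactor $A^\top$, inside $r=Ax-b$, and inside $\|r\|$, so the product rule generates three terms. Differentiating the prefactor gives $W^\top r=W^\top Ax-W^\top b$; differentiating $r$ gives $A^\top Wx$; together, after regrouping, these produce $(A^\top W+W^\top A)x-W^\top b$. Differentiating $\|r\|^{-1}$ contributes the rank-one correction $-\|r\|^{-2}A^\top r\,r^\top Wx$. Collecting all three contributions over $\lambda\|r\|$ reproduces (a) exactly. The only care required is bookkeeping of which copy of $A$ is being varied; once $r$ is named and the three contributions are kept separate, the computation is entirely routine.
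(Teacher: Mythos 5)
Your proposal is correct: the neighborhood argument for well-definedness and the three-way product/chain-rule bookkeeping for the $A$-derivative all check out, and each of (a)--(d) matches upon substitution of $\delta r = Wx$, $-h$, or $Ah$ as appropriate. The paper itself states \cref{lem:phi} without proof, treating it as a routine verification, and your computation is exactly the standard argument that was left implicit, so there is nothing to reconcile.
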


\bibliographystyle{siamplain}
\bibliography{bibliography}

\begin{thebibliography}{10}

\bibitem{adcock2019correcting}
{\sc B.~Adcock, A.~Bao, and S.~Brugiapaglia}, {\em Correcting for unknown
  errors in sparse high-dimensional function approximation}, Numerische
  Mathematik, 142 (2019), pp.~667--711.

\bibitem{adcock2020deep}
{\sc B.~Adcock, S.~Brugiapaglia, N.~Dexter, and S.~Moraga}, {\em Deep neural
  networks are effective at learning high-dimensional {H}ilbert-valued
  functions from limited data}, in Proceedings of the 2nd Mathematical and
  Scientific Machine Learning Conference, J.~Bruna, J.~Hesthaven, and
  L.~Zdeborova, eds., vol.~145 of Proceedings of Machine Learning Research,
  PMLR, 16--19 Aug 2022, pp.~1--36,
  \url{https://proceedings.mlr.press/v145/adcock22a.html}.

\bibitem{adcock2022log}
{\sc B.~Adcock, S.~Brugiapaglia, and M.~King-Roskamp}, {\em Do log factors
  matter? on optimal wavelet approximation and the foundations of compressed
  sensing}, Foundations of Computational Mathematics, 22 (2022), pp.~99--159.

\bibitem{adcock2022sparse}
{\sc B.~Adcock, S.~Brugiapaglia, and C.~G. Webster}, {\em Sparse Polynomial
  Approximation of High-Dimensional Functions}, vol.~25, SIAM, 2022.

\bibitem{adcock2021gap}
{\sc B.~Adcock and N.~Dexter}, {\em The gap between theory and practice in
  function approximation with deep neural networks}, SIAM Journal on
  Mathematics of Data Science, 3 (2021), pp.~624--655.

\bibitem{adcock2021compressive}
{\sc B.~Adcock and A.~C. Hansen}, {\em Compressive Imaging: Structure,
  Sampling, Learning}, Cambridge University Press, 2021.

\bibitem{agrawal2018rewriting}
{\sc A.~Agrawal, R.~Verschueren, S.~Diamond, and S.~Boyd}, {\em A rewriting
  system for convex optimization problems}, Journal of Control and Decision, 5
  (2018), pp.~42--60.

\bibitem{babu2012spectral}
{\sc P.~Babu}, {\em Spectral analysis of nonuniformly sampled data and
  applications.}, PhD thesis, Uppsala University, 2012.

\bibitem{babu2014connection}
{\sc P.~Babu and P.~Stoica}, {\em {Connection between SPICE and square-root
  LASSO for sparse parameter estimation}}, Signal Processing, 95 (2014),
  pp.~10--14.

\bibitem{belloni2011square}
{\sc A.~Belloni, V.~Chernozhukov, and L.~Wang}, {\em {Square-root LASSO}:
  pivotal recovery of sparse signals via conic programming}, Biometrika, 98
  (2011), pp.~791--806.

\bibitem{belloni2014pivotal}
{\sc A.~Belloni, V.~Chernozhukov, and L.~Wang}, {\em Pivotal estimation via
  square-root lasso in nonparametric regression}, The Annals of Statistics, 42
  (2014), pp.~757--788.

\bibitem{berk2021lasso}
{\sc A.~Berk}, {\em On {LASSO} parameter sensitivity}, PhD thesis, University
  of British Columbia, 2021.

\bibitem{coderepo}
{\sc A.~Berk, S.~Brugiapaglia, and T.~Hoheisel}, {\em Code repository}.
\newblock \url{https://github.com/asberk/srlasso_revolutions}.

\bibitem{berk2023lasso}
{\sc A.~Berk, S.~Brugiapaglia, and T.~Hoheisel}, {\em {LASSO} reloaded: a
  variational analysis perspective with applications to compressed sensing},
  SIAM Journal on Mathematics of Data Science, 5 (2023), pp.~1102--1129.

\bibitem{berk2021best}
{\sc A.~Berk, Y.~Plan, and {\"O}.~Y{\i}lmaz}, {\em On the best choice of
  {LASSO} program given data parameters}, IEEE Transactions on Information
  Theory, 68 (2021), pp.~2573--2603.

\bibitem{berk2021sensitivity}
{\sc A.~Berk, Y.~Plan, and {\"O}.~Y{\i}lmaz}, {\em Sensitivity of $\ell_1$
  minimization to parameter choice}, Information and Inference: A Journal of
  the IMA, 10 (2021), pp.~397--453.

\bibitem{bickel2009simultaneous}
{\sc P.~J. Bickel, Y.~Ritov, and A.~B. Tsybakov}, {\em Simultaneous analysis of
  {Lasso} and {Dantzig} selector}, The Annals of Statistics, 37 (2009),
  pp.~1705--1732.

\bibitem{bolte2024differentiating}
{\sc J.~Bolte, E.~Pauwels, and A.~Silveti-Falls}, {\em Differentiating
  nonsmooth solutions to parametric monotone inclusion problems}, SIAM Journal
  on Optimization, 34 (2024), pp.~71--97.

\bibitem{BPS21}
{\sc J.~Bolte, E.~Pauwels, and A.~Silvetti-Falls}, {\em Nonsmooth implicit
  differentiation for machine-learning and optimization}, Advances in Neural
  Information Processing Systems, 34 (2021).

\bibitem{bunea2013group}
{\sc F.~Bunea, J.~Lederer, and Y.~She}, {\em The group square-root lasso:
  Theoretical properties and fast algorithms}, IEEE Transactions on Information
  Theory, 60 (2013), pp.~1313--1325.

\bibitem{colbrook2022difficulty}
{\sc M.~J. Colbrook, V.~Antun, and A.~C. Hansen}, {\em The difficulty of
  computing stable and accurate neural networks: On the barriers of deep
  learning and smale's 18th problem}, Proceedings of the National Academy of
  Sciences, 119 (2022), p.~e2107151119.

\bibitem{diamond2016cvxpy}
{\sc S.~Diamond and S.~Boyd}, {\em {CVXPY}: {A} {P}ython-embedded modeling
  language for convex optimization}, Journal of Machine Learning Research, 17
  (2016), pp.~1--5.

\bibitem{donoho2006compressed}
{\sc D.~L. Donoho}, {\em Compressed sensing}, IEEE Transactions on Information
  Theory, 52 (2006), pp.~1289--1306.

\bibitem{dontchev2014implicit}
{\sc A.~L. Dontchev and R.~T. Rockafellar}, {\em Implicit Functions and
  Solution Mappings}, Springer Series in Operations Research and Financial
  Engineering, Springer, New York, NY, 2nd~ed., 2014.

\bibitem{foucart2023sparsity}
{\sc S.~Foucart}, {\em {The sparsity of LASSO-type minimizers}}, Applied and
  Computational Harmonic Analysis, 62 (2023), pp.~441--452.

\bibitem{friedlander2022perspective}
{\sc M.~P. Friedlander, A.~Goodwin, and T.~Hoheisel}, {\em From perspective
  maps to epigraphical projections}, Mathematics of Operations Research, 48
  (2023), pp.~1712--1740.

\bibitem{gilbert2017solution}
{\sc J.~C. Gilbert}, {\em On the solution uniqueness characterization in the
  {L1} norm and polyhedral gauge recovery}, Journal of Optimization Theory and
  Applications, 172 (2017), pp.~70--101.

\bibitem{harris2020array}
{\sc C.~R. Harris, K.~J. Millman, S.~J. van~der Walt, R.~Gommers, P.~Virtanen,
  D.~Cournapeau, E.~Wieser, J.~Taylor, S.~Berg, N.~J. Smith, R.~Kern, M.~Picus,
  S.~Hoyer, M.~H. van Kerkwijk, M.~Brett, A.~Haldane, J.~F. del R{\'{i}}o,
  M.~Wiebe, P.~Peterson, P.~G{\'{e}}rard-Marchant, K.~Sheppard, T.~Reddy,
  W.~Weckesser, H.~Abbasi, C.~Gohlke, and T.~E. Oliphant}, {\em Array
  programming with {NumPy}}, Nature, 585 (2020), pp.~357--362,
  \url{https://doi.org/10.1038/s41586-020-2649-2},
  \url{https://doi.org/10.1038/s41586-020-2649-2}.

\bibitem{HoP23}
{\sc T.~Hoheisel and E.~Paquette}, {\em Uniqueness in nuclear norm
  minimization: Flatness of the nuclear norm sphere and simultaneous
  polarization}, Journal of Optimization Theory and Applications, 197 (2023),
  pp.~252--276.

\bibitem{horn2012matrix}
{\sc R.~A. Horn and C.~R. Johnson}, {\em Matrix Analysis}, Cambridge University
  Press, Cambridge, UK, 2nd~ed., 2012,
  \url{https://doi.org/10.1017/9781139020411}.

\bibitem{mohammad2023greedy}
{\sc S.~Mohammad-Taheri and S.~Brugiapaglia}, {\em {The greedy side of the
  LASSO: New algorithms for weighted sparse recovery via loss function-based
  orthogonal matching pursuit}}, arXiv preprint arXiv:2303.00844,  (2023).

\bibitem{Mor06}
{\sc B.~Mordukhovich}, {\em Variational Analysis and Generalized
  Differentiation. I: Basic Theory}, vol.~330 of Grundlehren der mathematischen
  Wissenschaften, Springer-Verlag Berlin, Heidelberg, Germany, 2006.

\bibitem{Mor18}
{\sc B.~Mordukhovich}, {\em Variational Analysis and Applications}, Springer
  Monographs in Mathematics book series, Springer International Publishing AG,
  2018.

\bibitem{mosek}
{\sc {MOSEK ApS}}, {\em {MOSEK Optimizer API for Python 9.3.21}}, 2019,
  \url{https://docs.mosek.com/9.3/pythonapi/index.html}.

\bibitem{nghia2024geometric}
{\sc T.~Nghia}, {\em Geometric characterizations of lipschitz stability for
  convex optimization problems}, arXiv preprint arXiv:2402.05215,  (2024).

\bibitem{petersen2022robust}
{\sc H.~B. Petersen and P.~Jung}, {\em Robust instance-optimal recovery of
  sparse signals at unknown noise levels}, Information and Inference: A Journal
  of the IMA, 11 (2022), pp.~845--887.

\bibitem{pham2015robust}
{\sc V.~Pham and L.~El~Ghaoui}, {\em {Robust sketching for multiple square-root
  LASSO problems}}, in Artificial Intelligence and Statistics, PMLR, 2015,
  pp.~753--761.

\bibitem{rockafellar1970convex}
{\sc R.~T. Rockafellar}, {\em Convex Analysis}, vol.~18, Princeton University
  Press, Princeton, NJ, 1970.

\bibitem{rockafellar1998variational}
{\sc R.~T. Rockafellar and R.~J.-B. Wets}, {\em Variational Analysis}, vol.~317
  of Grundlehren der mathematischen Wissenschaften, Springer-Verlag Berlin,
  Heidelberg, Germany, 1998.

\bibitem{shen2015stable}
{\sc Y.~Shen, B.~Han, and E.~Braverman}, {\em Stable recovery of analysis based
  approaches}, Applied and Computational Harmonic Analysis, 39 (2015),
  pp.~161--172.

\bibitem{stoica2010new}
{\sc P.~Stoica, P.~Babu, and J.~Li}, {\em New method of sparse parameter
  estimation in separable models and its use for spectral analysis of
  irregularly sampled data}, IEEE Transactions on Signal Processing, 59 (2010),
  pp.~35--47.

\bibitem{stucky2017sharp}
{\sc B.~Stucky and S.~Van~de Geer}, {\em Sharp oracle inequalities for square
  root regularization}, Journal of Machine Learning Research, 18 (2017),
  pp.~1--29.

\bibitem{sun2012scaled}
{\sc T.~Sun and C.-H. Zhang}, {\em Scaled sparse linear regression},
  Biometrika, 99 (2012), pp.~879--898.

\bibitem{tian2018selective}
{\sc X.~Tian, J.~R. Loftus, and J.~E. Taylor}, {\em Selective inference with
  unknown variance via the square-root lasso}, Biometrika, 105 (2018),
  pp.~755--768.

\bibitem{tibshirani1996regression}
{\sc R.~J. Tibshirani}, {\em Regression shrinkage and selection via the lasso},
  Journal of the Royal Statistical Society: Series B (Methodological), 58
  (1996), pp.~267--288.

\bibitem{tibshirani2013lasso}
{\sc R.~J. Tibshirani}, {\em The {Lasso} problem and uniqueness}, Electronic
  Journal of Statistics, 7 (2013), pp.~1456--1490.

\bibitem{VDF15}
{\sc S.~Vaiter, C.~Deledalle, J.~Fadili, G.~Peyr\'e, and C.~Dossal}, {\em Low
  complexity regularization of linear inverse problems}, Applied and Numerical
  Harmonic Analysis, Birkh\"auser/Springer, 2015, pp.~103--153.

\bibitem{VDF17}
{\sc S.~Vaiter, C.~Deledalle, J.~Fadili, G.~Peyr\'e, and C.~Dossal}, {\em The
  degrees of freedom of partly smooth regularizers}, 69 (2017), pp.~791--832.

\bibitem{van2016estimation}
{\sc S.~Van~de Geer}, {\em Estimation and testing under sparsity}, Lecture
  Notes in Mathematics, Springer Cham, Switzerland, 2016.

\bibitem{zhang2015necessary}
{\sc H.~Zhang, W.~Yin, and L.~Cheng}, {\em Necessary and sufficient conditions
  of solution uniqueness in $1$-norm minimization}, Journal of Optimization
  Theory and Applications, 164 (2015), pp.~109--122.

\end{thebibliography}

\end{document}